\newtheorem{theorem}{Theorem}[section]
\newtheorem{lemma}[theorem]{Lemma}
\newtheorem{proposition}[theorem]{Proposition}
\newtheorem{corollary}[theorem]{Corollary}
\newtheorem*{Theorem1.1}{Theorem}
\newtheorem*{Corollary1.2}{Corollary}
\newtheorem*{Theorem1.3}{Theorem}
\newtheorem*{Theorem1.4}{Theorem}
\newtheorem*{Theorem1.5}{Theorem}
\newtheorem*{Theorem1.6}{Theorem}
\theoremstyle{definition}
\newcommand{\norm}[1]{\ensuremath{\left \vert \left \vert #1 \right \vert \right \vert}}
\newcommand{\abs}[1]{\ensuremath{\left\vert #1 \right\vert}}
\newcommand{\Cal}[1]{\ensuremath{\mathcal{#1}}}
\def\C{{\mathbb C}}
\def\R{{\mathbb R}}
\def\N{{\mathbb N}}
\def\Z{{\mathbb Z}}
\def\Q{{\mathbb Q}}
\def\H{{\mathbb H}}
\def\a{{\frak a}}
\def\O_K{{\mathcal{O}_{K}}}
\def\O_F{{\Cal{O}_{F}}}
\def\N_F{{\Cal{N}_{F/\Q}}}
\def\a{{\mathfrak a}}
\def\O_K{{\Cal{O}_{K}}}
\def\O_F{{\Cal{O}_{F}}}
\def\N_F{{\Cal{N}_{F/\Q}}}
\newcommand{\mz}{\ensuremath{\mathbb Z}}
\newcommand{\shortmod}{\ensuremath{\negthickspace \negthickspace \negthickspace \pmod}}
\newcommand{\half}{\ensuremath{ \frac{1}{2}}}
\newcommand{\intR}{\int_{-\infty}^{\infty}}
\newcommand{\thalf}{\tfrac12}
\newcommand{\sumstar}{\sideset{}{^*}\sum}
\newcommand{\leg}[2]{\left(\frac{#1}{#2}\right)}
\newcommand{\e}[2]{e\left(\frac{#1}{#2}\right)}
\DeclareMathOperator{\vol}{vol}
\numberwithin{equation}{section}
\numberwithin{theorem}{section}
\begin{document}

\title[]{Subconvexity and equidistribution of Heegner points in the level aspect}

\author{Sheng-Chi Liu, Riad Masri, and Matthew P. Young}

\address{Department of Mathematics, Mailstop 3368, Texas A\&M University, College Station, TX 77843-3368 }

\email{scliu@math.tamu.edu}
\email{masri@math.tamu.edu}
\email{myoung@math.tamu.edu}
\thanks{M.Y. was supported by the National Science Foundation under agreement No. DMS-0758235.  Any opinions, findings and conclusions or recommendations expressed in this material are those of the authors and do not necessarily reflect the views of the National Science Foundation.}

\begin{abstract} Let $q$ be a prime and $-D < -4$ be an odd fundamental discriminant such that %$(q,D)=1$ and 
$q$ splits in $\Q(\sqrt{-D})$.  
For $f$ a weight zero Hecke-Maass newform of level $q$ and $\Theta_\chi$ the weight one theta series of level $D$ corresponding to an ideal 
class group character $\chi$ of $\Q(\sqrt{-D})$, 
we establish a hybrid
subconvexity bound for %the Rankin-Selberg $L$-function 
$L(f \times \Theta_\chi,s)$ at $s=1/2$ 
when $q \asymp D^{\eta}$ for $0 < \eta < 1$.  
With this circle of ideas, we show that the Heegner points of level $q$ and discriminant $D$ become equidistributed, in a natural sense, as $q, D \rightarrow \infty$ for $q \leq D^{1/20-\varepsilon}$.
Our approach to these problems is connected to estimating the $L^2$-restriction norm of a Maass form of large level $q$ when restricted to the collection of Heegner points.
We furthermore establish bounds for quadratic twists of Hecke-Maass $L$-functions with simultaneously large level and large quadratic twist, and hybrid bounds for quadratic Dirichlet $L$-functions in certain ranges.
\end{abstract}
%\tableofcontents

\maketitle

\section{Introduction and statement of results}  
Let $f$ be an arithmetically normalized Hecke-Maass newform of weight zero and prime level $q$ with spectral parameter $t_f$. 
Let $K=\Q(\sqrt{-D})$ be an imaginary quadratic field of discriminant $-D < -4$, such that $q$ splits in $K$, with ideal class group $\mathrm{CL}_K$ and class number $h(-D)$. 
Given an ideal class group character $\chi \in \widehat{\mathrm{CL}}_K$, let $\Theta_{\chi}$ be the weight one theta series
of level $D$ corresponding to $\chi$. 
%We will establish the following 
%estimate for the first moment.
\begin{theorem}\label{momentthm}  
With notation as above, we have
% Let $q$ be a prime and $-D < -4$ be an odd fundamental discriminant such that 
% $q$ splits in $K=\Q(\sqrt{-D})$. Then for a 
% Hecke-Maass newform $f$ of weight zero for 
% $\Gamma_0(q)$ we have 
%\begin{align*}
\begin{multline}
\label{e8.4}
\sum_{\chi \in \widehat{\mathrm{CL}}_K}L(f \times \Theta_\chi,\tfrac{1}{2}) = 
\frac{3}{\pi} \frac{h(-D)^2}{\sqrt{D}}\frac{q^2}{q^2-1}L(\mathrm{sym}^2f,1) 
\\
+ O_{t_f, \varepsilon}((qD)^{\varepsilon} \min(q D^{7/16}, q^{3/4} D^{1/4} + q^{1/4} D^{1/2})).
%\end{align*}
\end{multline}
\end{theorem}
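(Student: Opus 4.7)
The plan is to evaluate this first moment by combining an approximate functional equation, character orthogonality on $\widehat{\mathrm{CL}}_K$, and a reduction to values of the principal binary quadratic form of discriminant $-D$.

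First, I apply the approximate functional equation to $L(f \times \Theta_\chi, 1/2)$. Since the Rankin--Selberg conductor is $\asymp q^2 D^2$, the effective length of the resulting Dirichlet sum is $X \asymp qD$. Expanding $\lambda_{\Theta_\chi}(n) = \sum_{N(\fa)=n} \chi(\fa)$ and applying the orthogonality relation $\sum_\chi \chi(\fa) = h(-D)\,\mathbf{1}[\fa \text{ principal}]$ reduces the $\chi$-average to a sum over principal ideals of $\ri_K$. Since $-D \equiv 1 \pmod 4$, such ideals are parametrized, modulo the $w = 2$ unit group, by nonzero $(x,y) \in \Z^2$ via the principal form $Q(x,y) = x^2 + xy + \tfrac{D+1}{4}y^2$, so the essential object to analyze is
\[
\frac{h(-D)}{2} \sum_{(x,y) \neq (0,0)} \frac{\lambda_f(Q(x,y))}{\sqrt{Q(x,y)}}\, V\!\left(\frac{Q(x,y)}{qD}\right),
\]
together with its dual counterpart. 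The diagonal $y = 0$ yields the main term: the inner sum reduces to $\sum_{x \neq 0} \lambda_f(x^2)/|x| \cdot V(x^2/(qD))$, which via Mellin inversion and the identity $\sum_n \lambda_f(n^2)/n^s = L(\mathrm{sym}^2 f, s)/\zeta(2s)$ (with the local factor at the level $q$ producing the factor $q^2/(q^2-1)$) has a pole at $s = 1$ contributing $L(\mathrm{sym}^2 f, 1)$. Assembling the numerical constants and combining with the dual AFE contribution produces the stated main term.

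The technical heart is bounding the off-diagonal $y \neq 0$ contribution. After completing the square $4Q(x,y) = (2x+y)^2 + Dy^2$, I would apply Voronoi summation for the level-$q$ newform $f$ to the inner $x$-sum, transforming it into a dual expression involving Bessel transforms of $V$ and Kloosterman sums modulo $q$; subsequent summation in $y$ couples this to character-sum structure modulo $D$. The two competing bounds in the error term reflect two distinct estimation strategies: $q^{3/4}D^{1/4} + q^{1/4}D^{1/2}$ from the Weil bound on Kloosterman sums together with stationary-phase control of the Bessel integrals, and $qD^{7/16}$ from a Burgess-type estimate on the $D$-character side after additional averaging in $y$. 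The main obstacle is executing the Voronoi/Poisson dualization cleanly at level $q$ while uniformly controlling the Bessel transforms in the spectral parameter $t_f$, and then balancing the two estimation regimes to achieve a nontrivial hybrid error term across the entire range $q \asymp D^\eta$.
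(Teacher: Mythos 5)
Your proposal is structurally a classical ``shifted convolution'' attack, which is genuinely different from what the paper does (the paper converts the moment via the Waldspurger/Zhang period formula into $\nu(q)L(\mathrm{sym}^2 f,1)\frac{h(-D)}{\sqrt{D}}\sum_\sigma|\tilde f(\tau^\sigma)|^2$ and then spectrally decomposes $|\tilde f|^2$ on $Y_0(q)$, extracting the main term from the constant eigenfunction and bounding the cuspidal and Eisenstein contributions by Watson's formula, H\"older, the spectral large sieve, and the first-moment estimate of Theorem \ref{thm:firstmoment}). But as written your argument has two genuine gaps. First, the main term: your diagonal $y=0$ computation produces $h(-D)\cdot c\cdot L(\mathrm{sym}^2 f,1)$ (times the local factor at $q$) with $c$ an absolute constant, whereas the stated main term is $\frac{3}{\pi}\frac{h(-D)^2}{\sqrt D}\frac{q^2}{q^2-1}L(\mathrm{sym}^2 f,1)=\frac{3}{\pi^2}\,h(-D)\,L(1,\chi_{-D})\,\frac{q^2}{q^2-1}L(\mathrm{sym}^2 f,1)$, which carries the fluctuating factor $L(1,\chi_{-D})=\pi h(-D)/\sqrt D$. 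No constant-coefficient diagonal can produce this. The missing factor comes from the degree-$4$ structure $L(f\times\Theta_\chi,s)=L(2s,\chi_{-D})\sum_n\lambda_f(n)\lambda_{\Theta_\chi}(n)n^{-s}$ (recall $\Theta_\chi$ has nebentypus $\chi_{-D}$), i.e.\ from an extra sum over $d$ with $d^2\mid n$ weighted by $\chi_{-D}(d)$, which your expansion of $\lambda_{\Theta_\chi}$ omits entirely; without it the claimed identification of the main term fails.

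Second, and more seriously, the off-diagonal: Voronoi summation applies to $\sum_n\lambda_f(n)e(an/c)w(n)$ with $n$ running linearly, not to $\sum_x\lambda_f(Q(x,y))$ where the argument is a quadratic polynomial in $x$. To dualize you must first detect the condition $4n-Dy^2=(2x+y)^2$, which converts the problem into a shifted convolution of $f$ against a weight-$1/2$ theta series --- precisely the ``nonsplit sum'' $\sum_{(x,y)}\lambda_f(Q(x,y))$ studied by Templier \cite{TemplierDuke}, and known to require either spectral methods or substantial additional machinery; it is not a routine Poisson/Weil computation. Relatedly, the two competing error bounds are asserted rather than derived: in the paper the exponent $D^{7/16}$ arises from H\"older's inequality combined with the Blomer--Harcos hybrid bound $L(g\times\chi_D,\tfrac12)\ll q^{1/2}D^{3/8+\varepsilon}$, Watson's formula, and the spectral large sieve (note $\tfrac{7}{16}=\tfrac14+\tfrac12\cdot\tfrac38$; no Burgess-type exponent equals $7/16$), while $q^{1/4}D^{1/2}$ rests on the nontrivial first-moment bound \eqref{eq:firstmomentq}, itself proved via Kuznetsov, Poisson summation modulo $c|D|$, and stationary phase. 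A ``Weil bound plus stationary phase'' treatment of Kloosterman sums modulo $q$ has no mechanism to reproduce these exponents, so the error term in your sketch is reverse-engineered from the statement rather than obtained from the proposed method.
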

%\vspace{0.05in}

We have established an analogue of Theorem \ref{momentthm} for holomorphic forms $f$ which will appear in a subsequent paper.

Recall the conductor of $L(f \times \Theta_\chi,s)$ at $s=1/2$ is $Q=(qD)^2$, 
so the convexity bound is 
%\begin{align*}
$L(f \times \Theta_{\chi},\tfrac{1}{2}) \ll_{t_f, \varepsilon} Q^{\frac{1}{4}+\varepsilon}$. 
%\end{align*}
These central values are nonnegative, and dropping all but one term in
Theorem \ref{momentthm} yields a subconvexity bound when $q \asymp D^{\eta}$ for $0 < \eta < 1$.  There are a variety of cases to consider to state the best bound as a function of $\eta$; for simplicity we record what one obtains with the second bound in \eqref{e8.4} which suffices for subconvexity for $0 < \eta < 1$. %(when $\eta = 0$ one should use the first bound in \eqref{e8.4}).
\begin{corollary}\label{mt} 
%Let $q, D$ and $f$ be as in Theorem \ref{momentthm}, and let $Q=(qD)^2$. 
For $\eta:=\log(q)/\log(D)$ satisfying $0 < \eta < 1$, we have 
\begin{equation}
L(f \times \Theta_\chi,\tfrac{1}{2}) \ll_{t_f, \varepsilon} Q^{\frac{1}{4}+\varepsilon}
\Big( Q^{- \frac{1-\eta}{8(1+\eta)}} + Q^{- \frac{\eta}{8(1+\eta)}} \Big).
\end{equation}
\end{corollary}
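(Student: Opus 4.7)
The plan is to deduce the corollary directly from Theorem \ref{momentthm} by dropping all but one term in the sum; the entire content is a conversion between scales. First, because the central values $L(f \times \Theta_\chi, 1/2)$ are nonnegative (as noted in the paper, via the Waldspurger/Gross period formula), for any fixed $\chi \in \widehat{\mathrm{CL}}_K$ one has
\begin{equation*}
L(f \times \Theta_\chi, \tfrac{1}{2}) \;\leq\; \sum_{\psi \in \widehat{\mathrm{CL}}_K} L(f \times \Theta_\psi, \tfrac{1}{2}).
\end{equation*}
Applying Theorem \ref{momentthm} with the second bound from the minimum, and using the elementary estimates $h(-D) \ll_\varepsilon D^{1/2+\varepsilon}$ and $L(\mathrm{sym}^2 f, 1) \ll_{t_f, \varepsilon} 1$, I obtain
\begin{equation*}
L(f \times \Theta_\chi, \tfrac{1}{2}) \;\ll_{t_f, \varepsilon}\; (qD)^\varepsilon \bigl(D^{1/2} + q^{3/4} D^{1/4} + q^{1/4} D^{1/2}\bigr).
\end{equation*}

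Next I would rewrite each term in terms of $Q = (qD)^2$ and $\eta = \log q/\log D$. Using $Q^{1/4} = (qD)^{1/2} = D^{(1+\eta)/2}$, a direct computation gives
\begin{equation*}
\frac{D^{1/2}}{Q^{1/4}} = Q^{-\frac{\eta}{4(1+\eta)}}, \qquad \frac{q^{3/4} D^{1/4}}{Q^{1/4}} = Q^{-\frac{1-\eta}{8(1+\eta)}}, \qquad \frac{q^{1/4} D^{1/2}}{Q^{1/4}} = Q^{-\frac{\eta}{8(1+\eta)}}.
\end{equation*}
Since $\eta/(4(1+\eta)) \geq \eta/(8(1+\eta))$ for $\eta > 0$, the main-term contribution $D^{1/2}$ is absorbed by the third term, and the claimed bound follows.

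There is no real obstacle — this corollary is a purely formal consequence of Theorem \ref{momentthm} and positivity; all the work is in the theorem itself. The only thing worth double-checking is that the resulting exponents genuinely beat convexity on the full interval $0 < \eta < 1$: the first saving $Q^{-(1-\eta)/(8(1+\eta))}$ is nontrivial precisely when $\eta < 1$ (i.e.\ $q < D$), while the second saving $Q^{-\eta/(8(1+\eta))}$ is nontrivial precisely when $\eta > 0$ (i.e.\ $q \to \infty$). Together they cover the entire hybrid range, confirming subconvexity throughout $(0,1)$. One could sharpen individual exponents by using the first bound in \eqref{e8.4} (namely $qD^{7/16}$) in the regime where $q$ is comparable to $D^{1/16}$ or smaller, but the uniform formulation above is cleanest.
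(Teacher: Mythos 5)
Your proof is correct and follows exactly the route the paper indicates: nonnegativity of the central values lets you drop all but one term in Theorem \ref{momentthm}, the main term is $\ll D^{1/2+\varepsilon}$ via $h(-D)\ll D^{1/2+\varepsilon}$, and your exponent conversions from $(q,D)$ to $(Q,\eta)$ all check out, with the main term indeed absorbed into $Q^{-\eta/(8(1+\eta))}$. The only nitpick is the closing remark: since the bound is governed by the \emph{larger} of the two terms, subconvexity requires \emph{both} exponents $\frac{1-\eta}{8(1+\eta)}$ and $\frac{\eta}{8(1+\eta)}$ to be positive simultaneously (which they are for all $\eta\in(0,1)$), rather than the two regimes ``covering'' the range in a union sense.
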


We note that Theorem \ref{momentthm} also yields the following quantitative 
nonvanishing result.  

\begin{corollary}\label{mt2} For each $\varepsilon > 0$ there is an effective constant $c=c(\varepsilon, t_f) > 0$ 
such that whenever $q \leq c  D^{1/16-\varepsilon}$, we have $L(f \times \Theta_\chi, 1/2) \neq 0$ for 
some $\chi \in \widehat{\mathrm{CL}}_K$. 
\end{corollary}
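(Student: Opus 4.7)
The strategy is to deduce nonvanishing directly from the first moment asymptotic in Theorem \ref{momentthm}. Because the central values $L(f \times \Theta_\chi, \tfrac{1}{2})$ are nonnegative (as recorded in the paragraph preceding Corollary \ref{mt}), it suffices to show that in the range $q \leq c D^{1/16 - \varepsilon}$ the main term on the right side of \eqref{e8.4} strictly exceeds the error term. Once this is established the sum over $\chi$ is strictly positive, so at least one summand $L(f \times \Theta_\chi, \tfrac{1}{2})$ must be nonzero.

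To bound the main term from below I would invoke Siegel's theorem, giving the (ineffective in $\varepsilon$) estimate $h(-D) \gg_\varepsilon D^{1/2 - \varepsilon}$, together with the Hoffstein--Lockhart-type lower bound $L(\mathrm{sym}^2 f, 1) \gg_{t_f, \varepsilon} q^{-\varepsilon}$ in the level aspect. Together these yield
\[
\frac{3}{\pi}\frac{h(-D)^2}{\sqrt{D}}\frac{q^2}{q^2 - 1}L(\mathrm{sym}^2 f, 1) \gg_{t_f, \varepsilon} D^{1/2 - \varepsilon} q^{-\varepsilon}
\]
for the main term. For the error I would take the first option inside the minimum in \eqref{e8.4}, since with $q \leq D^{1/16}$ one checks that the alternative $q^{1/4} D^{1/2}$ already swamps the main term; under $q \leq c D^{1/16 - \varepsilon}$ the first option is bounded by
\[
(qD)^\varepsilon q D^{7/16} \ll c\, D^{1/2 - \varepsilon + O(\varepsilon)}.
\]
Choosing $\varepsilon$ small on the main-term side and then $c = c(\varepsilon, t_f)$ sufficiently small absorbs all $D^{O(\varepsilon)}$ factors and produces the required strict domination.

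The exponent $1/16$ arises precisely from balancing the error $qD^{7/16}$ against the main-term size $D^{1/2}$, and no other combination in \eqref{e8.4} does better. The only genuine ``obstacle'' is the familiar one: the dependence of $c$ on $\varepsilon$ is inherited from the ineffectivity of Siegel's bound, and the effective alternative $h(-D) \gg \log D$ coming from Goldfeld--Gross--Zagier would collapse the permitted range of $q$ below any positive power of $D$. Beyond this, the corollary is a short quantitative nonvanishing argument that follows mechanically from the asymptotic in Theorem \ref{momentthm}.
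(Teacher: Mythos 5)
Your argument is the intended one: the paper gives no separate proof of this corollary, and the standard deduction --- nonnegativity of the central values plus domination of the error by the main term in \eqref{e8.4}, with Siegel's bound for $h(-D)$ and a Hoffstein--Lockhart lower bound for $L(\mathrm{sym}^2 f,1)$, balanced against the $qD^{7/16}$ option in the minimum --- is exactly what is meant, and your exponent bookkeeping (including the observation that the $q^{1/4}D^{1/2}$ term can never beat the main term) is correct. The one point worth stressing is that your reliance on Siegel's ineffective lower bound $h(-D) \gg_{\varepsilon} D^{1/2-\varepsilon}$ sits in direct tension with the word ``effective'' in the statement: as you yourself note, no effective substitute (Goldfeld--Gross--Zagier, or Tatuzawa with its possible exceptional discriminant) preserves a range of the form $q \leq c\,D^{1/16-\varepsilon}$, so the constant $c$ produced by this argument is not effectively computable and the corollary as literally phrased overclaims on that single word; everything else in your proposal is sound.
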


The subconvexity problem for %Rankin-Selberg 
$L$-functions in the level aspect 
has been studied extensively in recent years. For example, let $f$ and $g$ be Hecke cusp forms (holomorphic or Maass) 
for $GL_2$ of levels 
$M$ and $N$ respectively. If $M$ is fixed and $N$ varies, subconvexity bounds of the form 
$L(f \times g, 1/2) \ll_f N^{\frac{1}{2}-\delta}$
for some absolute $\delta > 0$ were established in various settings by many authors  %Kowalski, Michel, and VanderKam 
\cite{KMV} %, %Michel 
\cite{Michel} %, Harcos and Michel 
\cite{HMi} %, Blomer and Harcos 
%\cite{BlHa} %, and Michel and Venkatesh 
\cite{MV2}. 
On the other hand, it is also of interest to establish a subconvexity bound for $L(f \times g,1/2)$ when \textit{both} $M$ and $N$ are allowed 
to vary. A model result of this type was established by Michel and Ramakrishnan \cite{MR}, who considered the average of 
$L(f \times \Theta_{\chi}, 1/2)$ over holomorphic forms $f$.  Their result also implies subconvexity for these $L$-functions in the wide range $0 < \eta < 1$ 
(with notation as in Corollary \ref{mt}).  They remark that this subconvexity result is intriguing because ``such uniformity seems hard to achieve by purely analytic methods.''
Here we average over $\chi$ (not $f$), though after some transformations we are led to averaging $L(g \times \chi_{D}, 1/2)$ over $g$, 
where $g$ runs over level $q$ Hecke-Maass forms and $\chi_D$ is a quadratic Dirichlet character.    
Feigon and Whitehouse \cite{FW} generalized the work of \cite{MR} to the number field setting, and Nelson \cite{N} has obtained 
results with $\Theta_{\chi}$ replaced by more general holomorphic cusp forms.
Using a different approach (a second moment), Holowinsky and Munshi \cite{HMu} have obtained subconvexity 
when $M \asymp N^{\eta}$ for $\eta > 0$ in some fixed range; their work does not require the central values to be nonnegative.

The $L$-functions in Theorem \ref{momentthm} arise naturally in various arithmetic problems related to the 
equidistribution of Heegner points (see e.g. \cite{Duke} \cite{Z} \cite{Z2} \cite{Michel} \cite{HMi} \cite{MVICM} \cite{MV}).
%This can also be proved using a formula of Waldspurger/Zhang and subconvexity bounds for quadratic twists of Hecke-Maass $L$--functions. 
We will use some ideas involved in the proof of Theorem \ref{momentthm} to prove a ``sparse'' equidistribution theorem for Galois orbits of Heegner points 
in which both $q$ and $D$ are varying; this is different from the more familiar case as in \cite{Michel} \cite{HMi} where for $q$ fixed 
and $D$ varying the sparsity comes from suborbits of the full Galois orbit.
See Section \ref{section:equi} for a description of the problem and Theorem \ref{thm:equi2} for 
the precise result.
For brevity, we state here the following special consequence. 

%Let $q$ be a prime and $-D < -4$ be an odd 
%fundamental discriminant such that $q$ splits in $K=\Q(\sqrt{-D})$. 
Let $H$ be the Hilbert class field of $K$ and $G:=\textrm{Gal}(H/K) \cong \textrm{CL}_K$. 
The set $\Lambda_D(q)$ of Heegner points of discriminant $-D$ on the modular curve 
$X_0(q)$ splits into two simple, transitive $G$-orbits which are permuted by the Fricke involution 
$w_q$ which acts on weight $0$ forms $f$ (invariant under $\Gamma_0(q)$) by $(w_q f)(z) = f(-1/qz)$; for these facts, see pp. 235-236 of \cite{GZ}.
%Let $G < G_K$ be a subgroup and 
For $\tau \in \Lambda_D(q)$, consider the Galois orbit $G \tau=\{\tau^{\sigma}:~\sigma \in G \}$. The (open) modular curve $Y_0(q)=\Gamma_0(q) \backslash \H$ 
can be written as
\begin{align*}
Y_0(q)=\bigcup_{\omega_q \in \Gamma_0(1)/\Gamma_0(q)}\omega_q^{-1}Y_0(1).
\end{align*}
For any given $\omega_q$, we are interested in an asymptotic count for the number 
$$N_{G, D, \omega_q}:=\# (G\tau \cap \omega_q^{-1}Y_0(1))$$ of Heegner points in $G\tau$ which lie in $\omega_q^{-1}Y_0(1)$ 
as $q, D \rightarrow \infty$. Here we have in mind an analogy with counting primes (say) in an arithmetic progression where the choice of $\omega_q$ 
corresponds to the choice of residue class modulo $q$ in which one is counting primes. Note that by the $SL_2(\R)$-invariance of the hyperbolic measure we have 
\begin{align*}
\frac{\textrm{vol}(\omega_q^{-1}Y_0(1))}{\textrm{vol}(Y_0(q))}=\frac{1}{q+1},
\end{align*}
and thus the volume of $\omega_q^{-1}Y_0(1)$ is becoming very small compared to the total volume of $Y_0(q)$ as $q \rightarrow \infty$. 
%(here we used that $[\Gamma_0(1):\Gamma_0(q)]=q+1$ since $q$ is prime).
 
\begin{theorem} \label{thm:equi} We notation as above we have, uniformly in $\omega_q$, 
\begin{align}
N_{G, D, \omega_q} \sim \frac{h(-D)}{q+1}
\end{align}
as $q, D \rightarrow \infty$ with the restriction $q \leq D^{1/20 - \varepsilon}$.  Furthermore, for $D^{1/20 - \varepsilon} \leq q \leq D^{1/4-\varepsilon}$, we have
\begin{equation}
\label{eq:HeegnerUB}
 N_{G, D, \omega_q} \ll q^{1/4} D^{7/16 + \varepsilon}.
\end{equation}
%Morover, if $G \neq G_K$ we have 
% \begin{align*}
% N_{G, D, \omega_q} \sim \frac{\# G}{q+1}
% \end{align*}
% as $q, D \rightarrow \infty$ with the restriction $q  \leq D^{1/48 - \varepsilon}[G_K:G]^{-2/3}$. 
%The estimate is independent of the choice of $\omega_q$. 
\end{theorem}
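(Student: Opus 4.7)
The proof will proceed by a Weyl-type equidistribution argument on $Y_0(q)$. We approximate $\mathbf{1}_{\omega_q^{-1}Y_0(1)}$ from above and below by smooth functions $F^\pm$ with boundary-thickening parameter $\delta$ to be chosen later, so that
\[
\sum_{\sigma \in G} F^-(\tau^\sigma) \;\leq\; N_{G,D,\omega_q} \;\leq\; \sum_{\sigma \in G} F^+(\tau^\sigma),
\]
and then spectrally expand $F^\pm$ on $L^2(Y_0(q))$. The constant eigenfunction contributes
\[
h(-D) \cdot \frac{\mathrm{vol}(\omega_q^{-1}Y_0(1))}{\mathrm{vol}(Y_0(q))} \;=\; \frac{h(-D)}{q+1},
\]
which is the predicted main term. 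It remains to show that the cuspidal and Eisenstein contributions are $o(h(-D)/(q+1))$ in the range $q \leq D^{1/20-\varepsilon}$.

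For each Hecke-Maass newform $u_j$ of level dividing $q$, the Waldspurger/Gross-Zagier/Zhang period formula expresses the twisted sum
\[
W_j(\chi) \;:=\; \sum_{\sigma \in G} u_j(\tau^\sigma)\chi(\sigma)
\]
via $|W_j(\chi)|^2 \asymp \sqrt{D}\, L(u_j \times \Theta_\chi, \tfrac{1}{2}) / L(\mathrm{sym}^2 u_j, 1)$. Taking $\chi$ trivial and using the factorization $L(u_j \times \Theta_{\chi_0}, s) = L(u_j, s) L(u_j \times \chi_D, s)$, the untwisted Weyl sum $W_j(\chi_0)$ is controlled by $L(u_j, \tfrac12) L(u_j \times \chi_D, \tfrac12)$, i.e.\ by central values of level-$q$ forms and their quadratic twists by $\chi_D$. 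The crucial analytic input is then a first moment bound for this product averaged over $u_j$ of level $q$, which is the spectral-twist dual of Theorem \ref{momentthm} and matches the authors' remark that ``we are led to averaging $L(g \times \chi_D, \tfrac12)$ over $g$''. Applying Cauchy-Schwarz on the spectral side, truncating at spectral parameter $\leq T$, bounding the tail via Sobolev decay of $\langle F^\pm, u_j \rangle$, and handling the continuous spectrum by the hybrid bounds for quadratic Dirichlet $L$-functions promised in the abstract, one obtains a cuspidal error roughly of shape $T^A (qD)^{\varepsilon} (q^2 D^{7/8})^{1/2}$ up to the correct volume normalization. Balancing $T$ and $\delta$ against the main term $h(-D)/(q+1) \asymp D^{1/2}/q$ produces the condition $q \leq D^{1/20-\varepsilon}$.

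The upper bound \eqref{eq:HeegnerUB} for $D^{1/20-\varepsilon} \leq q \leq D^{1/4-\varepsilon}$ is obtained directly by dropping equidistribution and using $N_{G,D,\omega_q} \leq \sum_\sigma F^+(\tau^\sigma)$, invoking positivity of the $L$-values, and applying the $qD^{7/16+\varepsilon}$ branch of Theorem \ref{momentthm} before dividing by $h(-D) \gg D^{1/2-\varepsilon}$ and extracting the claimed exponent via Cauchy-Schwarz over the Galois orbit. The principal obstacle is uniformity: the test function $F^\pm$ lives on a portion of $Y_0(q)$ of relative volume $1/(q+1)$ that shrinks as $q$ grows, so the Sobolev-type control on $\langle F^\pm, u_j \rangle$ must be genuinely quantitative in both $q$ and $t_j$, and the spectral truncation $T$ must be optimized jointly against the smoothing scale $\delta$ and the parameters $q, D$. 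Matching the precise exponent $1/20$ requires pushing the first moment bound of Theorem \ref{momentthm} to its edge, and the Eisenstein contribution must be treated with comparable uniformity using the hybrid bounds for quadratic Dirichlet $L$-functions.
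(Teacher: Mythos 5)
Your overall architecture matches the paper's: Theorem \ref{thm:equi} is deduced from a smoothed equidistribution statement (the paper's Theorem \ref{thm:equi2}) proved by spectrally decomposing a test function on $Y_0(q)$ supported on the single translate $\omega_q^{-1}Y_0(1)$, extracting the main term $h(-D)/(q+1)$ from the constant eigenfunction, and controlling the Weyl sums $W_{D,g}$ through the Waldspurger/Zhang formula together with the Blomer--Harcos hybrid bound for $L(g\times\chi_D,\tfrac12)$. However, there is a genuine quantitative gap. Your stated cuspidal error $(q^2D^{7/8})^{1/2}=qD^{7/16}$ is too large by a factor $q^{3/4}$ and, balanced against the main term $\asymp D^{1/2}/q$, would only give equidistribution for $q\le D^{1/32-\varepsilon}$, not $q\le D^{1/20-\varepsilon}$. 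The correct bookkeeping is: after Cauchy--Schwarz, Bessel's inequality gives $\sum_g|\langle U_q,g\rangle_q|^2\le\langle U_q,U_q\rangle_q=O_U(1)$ with \emph{no} loss from the size of the spectral family (smoothness is needed only to truncate at $|t_g|\le D^{\varepsilon}$, so no factor $T^A$ survives), and then
\begin{equation*}
\sum_{|t_g|\le D^{\varepsilon}}|W_{D,g}|^2\ll\frac{D^{1/2}}{q}\Big(\sup_g L(g\times\chi_D,\tfrac12)\Big)\sum_g\frac{L(g,\tfrac12)}{L(\mathrm{sym}^2g,1)}\ll\frac{D^{1/2}}{q}\cdot q^{1/2}D^{3/8+\varepsilon}\cdot q^{1+\varepsilon},
\end{equation*}
where the last factor is the standard \emph{first moment} bound rather than convexity term by term times the count $\asymp q$ of forms (which is apparently where your $q^2$ comes from). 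This gives error $q^{1/4}D^{7/16+\varepsilon}$, and the comparison $q^{1/4}D^{7/16}\le D^{1/2-\varepsilon}/q$ is exactly $q\le D^{1/20-\varepsilon}$.

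Second, your derivation of the upper bound \eqref{eq:HeegnerUB} from ``the $qD^{7/16+\varepsilon}$ branch of Theorem \ref{momentthm}'' does not work: Theorem \ref{momentthm} controls, via \eqref{af}, the restricted norm $\sum_\sigma|\widetilde f(\tau^\sigma)|^2$ for a \emph{fixed newform} $f$, which says nothing about how many Heegner points lie in $\omega_q^{-1}Y_0(1)$, and no application of Cauchy--Schwarz over the Galois orbit converts it into such a count (nor does it produce the exponent $q^{1/4}$). In the paper, \eqref{eq:HeegnerUB} requires nothing new: it is the same spectral error term $q^{1/4}D^{7/16+\varepsilon}$ from the equidistribution argument, which simply dominates the main term $h(-D)/(q+1)$ once $q\ge D^{1/20}$. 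Finally, the ``principal obstacle'' you flag (uniformity in $q$ of the Sobolev control on $\langle F^\pm,u_j\rangle_q$) is not an obstacle in this construction: $\Delta^AU_q$ is the function built from $\Delta^Au$ in the same way, so $\langle \Delta^A U_q,\Delta^A U_q\rangle_q=\langle \Delta^A U,\Delta^A U\rangle_1$ is independent of $q$ and of the choice of $\omega_q$, and after Cauchy--Schwarz only Bessel's inequality is used.
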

The upper bound in \eqref{eq:HeegnerUB} shows that the Heegner points cannot cluster too much into one translate $\omega_q^{-1} Y_0(1)$.

%The equidistribution also follows from a formula of 
%Waldspurger/Zhang and subconvexity bounds for quadratic twists of Hecke-Maass $L$--functions. In particular, for $q$ fixed, the asymptotic 
%formula in Theorem \ref{momentthm} is essentially equivalent to equidistribution (see our description of the proof).   

%

%\begin{theorem}
%\label{thm:equi}
%Let $q$ be prime, with $(q,D) = 1$.  For each $\omega_q \in \Gamma_0(1)/\Gamma_0(q)$, let $N_{D, \omega}$ be the number of Heegner points $\tau^{\sigma}$ of discriminant $D$ and level $q$ 
%with $\tau^{\sigma} \in \omega_q (\Gamma_0(1) \backslash \mathbb{H})$.
%Then
%\begin{equation}
%N_{D, \omega_q} \sim \frac{h(-D)}{[\Gamma_0(1):\Gamma_0(q)]},
%\end{equation}
%as $q, D \rightarrow \infty$ with the restriction $q \leq D^{1/20 - \varepsilon}$.  The estimate does not depend on $\omega_q$.
%\end{theorem}

Our work here has some connections with $L^2$ restriction norms of automorphic forms.  
The formula \eqref{af} below relates the average of $L$-values appearing in Theorem \ref{momentthm} to 
the sum over Heegner points of the modulus squared of a level $q$ Maass form.  It is very interesting to understand this behavior as either $q$ and $D$ vary.  If $D$ is fixed and 
$q$ varies then obviously the number of points is fixed and the problem is really about the sup-norm at special points.  As $q$ and $D$ vary together then we are studying a hybrid version.  
One can see some pleasant analogies between the methods used here and in \cite{BKY}, especially their Theorem 1.7 which is a geodesic restriction bound.

The restriction to prime level $q$ is made to simplify some arguments.  Probably with some extra work one can 
show that our results hold for all square-free integers $q > 1$ and odd fundamental discriminants $-D < -4$ such that 
every prime divisor of $q$ splits in $\Q(\sqrt{-D})$.  For simplicity we also do not keep track of the $t_f$-dependence in our estimates, but it is clear from the proof that the dependence is polynomial (and probably of ``respectable'' degree).

When $q$ is fixed, Michel and Venkatesh \cite{MV} 
proved an asymptotic formula with a power savings in $D$ for the first moment in Theorem \ref{momentthm} and gave applications to nonvanishing. 
An analogous result for central derivatives was obtained by Templier \cite{TemplierDuke}. Earlier, Duke, Friedlander, and Iwaniec \cite{DFI} 
obtained an asymptotic formula for the second moment of class group $L$-functions which 
in Theorem \ref{momentthm} corresponds to replacing $f$ by an Eisenstein series.
The novelty in Theorem \ref{momentthm} is that both $q$ and $D$ are allowed to vary in a fairly wide range.
Our proof is inspired in many ways by \cite{MV}; see in particular their Remark 2.1 where they set up the following spectral approach.
We begin by using a formula of Waldspurger/Zhang \cite{Waldspurger} \cite{Z} for $L(f \times \Theta_\chi,1/2)$ to relate the 
first moment to the sum of a fixed automorphic function $F$ evaluated on Heegner points. We then spectrally decompose this 
sum and analyze each part of the spectral contribution separately. The main part of our analysis 
involves the estimation of the contribution of the Maass forms. Roughly, we must estimate an expression 
of the form 
\begin{align*}
\sum_{|t_g| \ll (qD)^{\varepsilon} }\langle F, g \rangle_q W_{D,g}
\end{align*}
where $g$ runs over an orthonormal basis of Maass cusp forms for $\Gamma_0(q)$,  $\langle \cdot, \cdot \rangle_q$ is the Petersson 
inner product on $L^2(Y_0(q))$, and $W_{D,g}$ is the ``Weyl sum'' of $g$ evaluated on a Galois orbit of Heegner points of discriminant $-D$ on $X_0(q)$.  
Watson's formula \cite{W} relates $|\langle F, g\rangle_q|^2$ to the triple product $L$-function $L(f \times f \times g, 1/2) = L(\mathrm{sym}^2 f \times g, 1/2) L(g, 1/2)$ 
while the Waldspurger/Zhang formula relates $|W_{D,g}|^2$ to $L(g, 1/2) L(g \times \chi_D, 1/2)$.  Thus one is naturally led to estimating mean values over quite different families of $L$-functions from those occurring in Theorem \ref{momentthm}.  Various applications of H\"{o}lder's inequality are possible here (see the end of Section \ref{section:cuspforms} for further discussion), and in particular one is led to estimating the average of $L(g \times \chi_D, 1/2)$ which has the advantage of being a $GL_2$ family with smaller conductors than the original $L$-functions.  If $q$ is very small compared to $D$ then the best we can can do is apply a hybrid subconvexity estimate of \cite{BlHa}.  However, if $q$ is somewhat large then it is advantageous to use the following which is of independent interest.
\begin{theorem}
\label{thm:firstmoment}
 Let $T > 0$, $1 \leq M \leq T$, and suppose $\chi_D$ is a primitive quadratic Dirichlet character of conductor $|D|$.  Then for $SL_2(\mz)$, we have
\begin{equation}
\label{eq:firstmoment1}
 \sum_{\substack{%g \in \mathcal{B}_1^{(1)} \\ 
T \leq t_g \leq T + M}} \frac{L(g \times \chi_D, \tfrac12)}{L( \mathrm{sym}^2 g, 1)} + \int_{T}^{T+M} \frac{|L(\tfrac12 + it, \chi_D)|^2}{|\zeta(1 + 2it)|^2} dt \ll (TM + \sqrt{|D|})(|D|MT)^{\varepsilon}.
\end{equation}
Under the additional assumption that $q$ is prime with $(D,q) = 1$, we have for any $M \geq 1$
\begin{equation}
\label{eq:firstmomentq}
 \sum_{\substack{%g \in \mathcal{B}_q \\  
|t_g| \leq M}} \frac{L(g \times \chi_D, \tfrac12)}{L(\mathrm{sym}^2 g, 1)}  \ll (q M^2 + \sqrt{|D|})(|D|Mq)^{\varepsilon},
\end{equation}
where the sum is over Maass newforms for $\Gamma_0(q)$.
In \eqref{eq:firstmoment1} and \eqref{eq:firstmomentq}, the implied constant depends on $\varepsilon > 0$ only.
\end{theorem}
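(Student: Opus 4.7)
The strategy is to combine the Kuznetsov trace formula with an approximate functional equation. Apply an AFE of length $\asymp |D|(1+|t_g|)$ to write
\begin{align*}
L(g \times \chi_D, \tfrac{1}{2}) = 2\sum_n \frac{\lambda_g(n) \chi_D(n)}{\sqrt{n}}\, V\!\left(\frac{n}{|D|(1+|t_g|)}\right)
\end{align*}
for a smooth rapidly decaying $V$, with an analogous expansion for $|L(\tfrac12+it,\chi_D)|^2 = L(\tfrac12+it,\chi_D) L(\tfrac12-it,\chi_D)$ (using that $\chi_D$ is real). Choose a non-negative smooth test function $h(t)$ majorizing the characteristic function of $[T, T+M]$ (resp.\ $[-M, M]$) and decaying rapidly outside. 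Because $L(g \times \chi_D, \tfrac{1}{2}) \geq 0$ by Waldspurger and the Eisenstein integrand is non-negative, the quantities in \eqref{eq:firstmoment1}--\eqref{eq:firstmomentq} are dominated by the corresponding weighted Kuznetsov spectral sum; in \eqref{eq:firstmomentq} the Eisenstein contribution is added in for free.

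After swapping orders of summation and separating the $t_g$-dependence of $V$ via Mellin inversion, the problem reduces to bounding $\sum_n \chi_D(n) n^{-1/2} W(n/|D|T)\, \mathcal{K}(n)$, where $\mathcal{K}(n)$ is the Kuznetsov spectral side at $(m,n)=(1,n)$ with test function (essentially) $h$. Applying Kuznetsov for $SL_2(\mathbb{Z})$ (resp.\ $\Gamma_0(q)$, handling the additional cusp at $0$ in the level-$q$ case analogously) converts $\mathcal{K}(n)$ into a diagonal term $\delta(n=1) \cdot \pi^{-2} \int h(r) r \tanh(\pi r)\,dr$, of size $\asymp TM$ in \eqref{eq:firstmoment1} and $\asymp qM^2$ in \eqref{eq:firstmomentq} (the extra $q$ arising from the Weyl-law constant at level $q$), plus a Kloosterman sum contribution
\begin{align*}
E = \sum_n \frac{\chi_D(n)}{\sqrt{n}} W\!\left(\frac{n}{|D|T}\right)\! \sum_{\substack{c \geq 1 \\ q \mid c}} \frac{S(1, n; c)}{c}\, \Phi_h\!\left(\frac{4\pi\sqrt{n}}{c}\right).
\end{align*}
The diagonal produces exactly the $TM$ and $qM^2$ terms in the theorem.

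It remains to show $E \ll \sqrt{|D|}\,(|D|MTq)^{\varepsilon}$. Analysis of the Kuznetsov Bessel transform $\Phi_h$ shows it is negligibly small unless its argument exceeds $T^{1-\varepsilon}$ (resp.\ $M^{1-\varepsilon}$), effectively confining $c$ to a short range. To exploit the character $\chi_D$, apply Poisson summation to the $n$-sum modulo $c|D|$; after CRT (with $\gcd(c,|D|) > 1$ handled separately by simpler trivial bounds), the resulting complete sum factors as a Gauss sum for $\chi_D$ of size $\sqrt{|D|}$ times an additive character sum modulo $c$ controlled by Weil's bound. Trivial estimation of the short dual $n$-sum against the Fourier transform of the analytic weight, summed over the allowed range of $c$, then yields the claimed bound.

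The main technical obstacle lies in the careful analysis of $\Phi_h$ in the short-window regime of \eqref{eq:firstmoment1}, where $M$ may be much smaller than $T$ and $\Phi_h$ exhibits a delicate two-scale structure: repeated integration by parts must be carried out so that the effective range of $c$ after localization, and the ensuing savings from Poisson summation, depend on $M$ rather than $T$ in the appropriate places. The restriction to newforms in \eqref{eq:firstmomentq} is handled by standard Möbius inversion between the newform basis and the full Hecke basis of $\Gamma_0(q)$; for prime $q$ the oldform contribution arises from $\Gamma_0(1)$ at spectral height $|t_g| \leq M$, which by the same argument gives a bound $\ll (M^2 + \sqrt{|D|})(|D|M)^{\varepsilon}$, absorbed by $qM^2 + \sqrt{|D|}$.
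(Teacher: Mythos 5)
Your overall architecture --- nonnegativity of the central values, an approximate functional equation, Kuznetsov, truncation of the $c$-sum via decay of the Bessel transform, and Poisson summation modulo $c|D|$ --- is the same as the paper's. But the opening step as written is wrong. The identity $L(g \times \chi_D, \tfrac12) = 2\sum_n \lambda_g(n)\chi_D(n) n^{-1/2} V(n/(|D|(1+|t_g|)))$ is the balanced approximate functional equation specialized to root number $+1$. The sign of the functional equation of $L(g \times \chi_D, s)$ depends on the parity of $g$, and both parities occur in the spectral family; for the odd-sign forms the central value vanishes while $2\sum_n(\cdots)$ does not. Hence the spectral sum you feed into Kuznetsov is neither equal to nor an upper bound for the moment in \eqref{eq:firstmoment1}, and positivity cannot bridge the gap because the spurious contributions have no fixed sign. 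You must either split the basis by parity and use the corresponding even/odd Kuznetsov formulas, or do what the paper does: use a one-piece approximate functional equation of length $X = Q^{1+\varepsilon}$ \emph{exceeding} the analytic conductor $Q = q D^2 (T+1)^2$, which is valid uniformly in the root number (and in the parity of $\chi_D$). The longer sum costs nothing here, since the final bound on the Kloosterman contribution turns out to be independent of the length of the $n$-sum.

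A second concrete gap: "trivial estimation of the short dual $n$-sum" does not dispose of the zero frequency $m = 0$ in Poisson summation, which is of size roughly $(c|D|)^{-1} |a(0;c,D)| \cdot N(T+1)M$ and is not small in general. In the level-$q$ case one must verify $a(0;c,D) = 0$ for $q \mid c$ (this is where $(q,D)=1$ enters); in the level-$1$ case one must first reduce to $T \geq |D|^{\varepsilon}$ by convexity and then use stationary phase on the Bessel transform to show $r(0;c,D) = O(X^{-100})$. Two smaller imprecisions: at primes $p \mid c$ with $p \nmid D$ the complete exponential sum collapses by orthogonality to an object of full size $c$ (Weil gives nothing and nothing is needed), whereas at primes dividing $(c,D)$ one meets Sali\'{e}-type sums rather than a clean Gauss-sum factorization; and the paper handles the oldforms not by M\"{o}bius inversion but by showing that the combined $\mathcal{B}_1^{(q)} \cup \mathcal{B}_1^*$ contribution to the full-basis Kuznetsov sum is a nonnegative multiple of $L(g \times \chi_D, \tfrac12)$ up to a negligible error, so it can simply be added in by positivity. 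These last points are repairable; the root-number issue and the $m=0$ term are the steps that would actually fail as proposed.
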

We emphasize that $D$ can be any fundamental discriminant in Theorem \ref{thm:firstmoment} and $q$ can be any odd prime coprime to $D$ (in contrast to Theorem \ref{momentthm}).

Estimates for $L(g \times \chi_D, 1/2)$ and $L(1/2 + it, \chi_D)$ have been studied in various aspects.  For instance, with $g$ fixed (alternately, $t$ fixed), Conrey and Iwaniec \cite{CI} have proved the Weyl-type subconvexity bound $L(g \times \chi_D, 1/2) + |L(1/2 + it, \chi_D)|^2 \ll_{g,t} |D|^{1/3 + \varepsilon}$ by bounding the third moment on average over $g$ of level dividing $|D|$.  The nonnegativity of the central values \cite{KatokSarnak} 
\cite{Biro} is crucial for obtaining this bound since otherwise one cannot drop all but one term.  Recently Michel and Venkatesh \cite{MV2} have obtained a general subconvexity result on $GL_2$ in particular valid for any range of $q$, $T$, and $D$.  For Dirichlet $L$-functions, Heath-Brown \cite{H-B1} \cite{H-B2} showed $L(1/2 + it, \chi) \ll (q(1+|t|))^{3/16 + \varepsilon}$; see also \cite{HuxleyWatt} \cite{Watt}.  Blomer and Harcos \cite{BlHa} have obtained a general hybrid result for $GL_2$ with the use of an amplifier.  Our approach to Theorem \ref{thm:firstmoment} has some elements in common with work of Munshi \cite{Munshi}.
On dropping all but ``one term'', we obtain
\begin{corollary}
 Suppose $g$ is a Hecke-Maass cusp form for $SL_2(\mathbb{Z})$.  Then
\begin{equation}
\label{eq:subconvexity}
 L(g \times \chi_D, \tfrac12) + |L(\tfrac12 + iT, \chi_D)|^2 \ll (T + |D|^{1/2}) (T|D|)^{\varepsilon}.
\end{equation}
Similarly, if $g$ is a Hecke-Maass cusp form for $\Gamma_0(q)$ with $q$ prime and $(q,D) = 1$, we have
\begin{equation}
\label{eq:subconvexityq}
 L(g \times \chi_D, \tfrac12 ) \ll (q + |D|^{1/2})(q|D|)^{\varepsilon},
\end{equation}
\end{corollary}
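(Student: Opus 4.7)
The plan is to derive both bounds directly from Theorem \ref{thm:firstmoment} by extracting a single summand (respectively, a pointwise value in the integral) via positivity. Two ingredients are needed: nonnegativity of the central values $L(g\times\chi_D,\tfrac12)\geq 0$ (Katok--Sarnak/Guo/Bir\'{o}), which allows us to drop all but one term of the spectral sum in \eqref{eq:firstmoment1} and \eqref{eq:firstmomentq}; and standard bounds $L(\mathrm{sym}^2 g,1) \asymp (1+|t_g|)^{o(1)}$ (Hoffstein--Lockhart together with the Iwaniec lower bound) and $|\zeta(1+2it)|^{-1} \ll \log(2+|t|)$, so that the $L$-value denominators contribute only factors absorbed by $(T|D|)^{\varepsilon}$.

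For \eqref{eq:subconvexityq}, I fix the Hecke--Maass form $g$ for $\Gamma_0(q)$ and take $M = |t_g|+1$ in \eqref{eq:firstmomentq}; since $g$ is fixed, $M$ is $O(1)$ and absorbed into the implicit constant. Nonnegativity gives
\[
 L(g\times\chi_D,\tfrac12) \ll L(\mathrm{sym}^2 g,1)\,(q + \sqrt{|D|})(q|D|)^{\varepsilon} \ll (q + |D|^{1/2})(q|D|)^{\varepsilon}.
\]
The bound $L(g\times\chi_D,\tfrac12) \ll (T + |D|^{1/2})(T|D|)^{\varepsilon}$ in \eqref{eq:subconvexity} follows identically from \eqref{eq:firstmoment1} with $T = |t_g|$ and $M = 1$.

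For the bound on $|L(\tfrac12+iT,\chi_D)|^2$ in \eqref{eq:subconvexity}, I apply \eqref{eq:firstmoment1} with the interval $[T-\tfrac12,T+\tfrac12]$ (equivalently, shift $T\mapsto T-\tfrac12$) and $M=1$. Dropping the spectral sum by positivity and using $|\zeta(1+2iu)|^{-1}\ll \log(2+|u|)$ gives the short-interval second moment
\[
 \int_{T-1/2}^{T+1/2}\bigl|L(\tfrac12+iu,\chi_D)\bigr|^2\,du \ll (T+\sqrt{|D|})(T|D|)^{\varepsilon}.
\]
Passing from this mean square to the pointwise value at $u=T$ is the one non-routine step. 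The standard device is a mean-value inequality of the form
\[
 \bigl|L(\tfrac12+iT,\chi_D)\bigr|^2 \ll \log(T|D|) \int_{T-1}^{T+1}\bigl|L(\tfrac12+iu,\chi_D)\bigr|^2\,du + 1,
\]
proved via the approximate functional equation (the Dirichlet polynomial approximation to $L(\tfrac12+iu,\chi_D)$ has length $\sqrt{|D|(1+|u|)}$ and its square varies by at most a constant factor on intervals of length $1/\log(T|D|)$). Combined with the short-interval second moment, this yields \eqref{eq:subconvexity}.

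The only real obstacle is this last smoothing step; everything else is mechanical once Theorem \ref{thm:firstmoment} is in hand. In particular, positivity is crucial: absent the Katok--Sarnak/Guo nonnegativity, one cannot isolate a single Maass form and would only obtain an average statement. The bound on $|L(\tfrac12+iT,\chi_D)|^2$ is of Weyl-strength quality in the hybrid range where $T\asymp \sqrt{|D|}$ (yielding $L(\tfrac12+iT,\chi_D)\ll (T|D|)^{1/4+\varepsilon}$, which beats convexity), and of standard Burgess-type strength when one aspect dominates.
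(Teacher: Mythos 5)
Your proof is correct and takes essentially the same route as the paper, whose entire argument is the phrase ``on dropping all but one term'' applied to Theorem \ref{thm:firstmoment} together with nonnegativity of the central values. You have in fact supplied more detail than the paper does, namely the mean-square-to-pointwise smoothing step needed to extract $|L(\tfrac12+iT,\chi_D)|^2$ from the short-interval integral --- the one genuinely non-trivial point, which the paper's scare quotes around ``one term'' tacitly acknowledge but do not spell out.
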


The conductor of the $L$-functions in \eqref{eq:subconvexity} is $(|D|T)^2$ so this is a subconvexity bound for $T^{1+\delta} \leq |D| \leq T^{A}$ for any fixed $\delta > 0$ and $A > 0$.  For the Dirichlet $L$-function, this estimate improves on Heath-Brown's hybrid bound $L(1/2 + it, \chi_D) \ll (|D|t)^{3/16 + \varepsilon}$ 
provided $t^{5/3} \leq |D| \leq t^3$.  The bound \eqref{eq:subconvexity} is strongest when $|D| \asymp T^2$ in which case it gives a Weyl-type bound.  Similarly, \eqref{eq:subconvexityq} is subconvex for $q^{3/2+\delta} \leq |D| \leq q^A$ with fixed $\delta, A > 0$.

Based on our spectral approach to Theorem \ref{momentthm}, it is desirable to estimate the $L^4$-norm of an $L^2$-normalized Maass form $\widetilde{f}$ of large level $q$ and say bounded Laplace eigenvalue (see \cite{BKY} for investigations into the weight aspect).  Recently, Blomer \cite{BlomerL4}  showed a best-possible (up to $q^{\varepsilon}$) estimate on average for such forms, and observed that $\| \widetilde{f} \|_4^4 \ll q^{-1/3 + \varepsilon}$ by interpolating the sup-norm bound of \cite{HT} with the $L^2$-normalization.  As an easy byproduct of our work here, we record the following
\begin{proposition}
\label{prop:L4}
 Suppose $\widetilde{f}$ is a Hecke-Maass newform of prime level $q$ and spectral parameter $t_f$ which is $L^2$-normalized according to the inner product \eqref{ip}.  Then we have
\begin{equation}
\label{eq:L4}
\| \widetilde{f} \|_4^4 = \int_{Y_0(q)} |\widetilde{f}(z)|^4 \frac{dx dy}{y^2} \ll_{t_f} q^{-1/2 + \varepsilon}.
\end{equation}
\end{proposition}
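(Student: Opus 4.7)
The plan is to attack $\|\widetilde{f}\|_4^4 = \||\widetilde{f}|^2\|_2^2$ spectrally. Applying Plancherel on $L^2(Y_0(q))$ against an orthonormal basis consisting of the constant function, Hecke-Maass cusp forms $\{g\}$ of level $q$, and Eisenstein series gives
\begin{equation*}
\|\widetilde{f}\|_4^4 = \frac{1}{\vol(Y_0(q))} + \sum_g\bigl|\langle|\widetilde{f}|^2, g\rangle_q\bigr|^2 + (\text{Eisenstein contribution}).
\end{equation*}
The constant term contributes $\asymp q^{-1}$, already within the target. For the cuspidal piece, I would apply Ichino's explicit form of Watson's triple-product formula at $\Gamma_0(q)$. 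Since $\widetilde{f}$ and $g$ are both new of the same prime level $q$, the local factor at $q$ supplies a $q^{-2}$, and after factoring $L(f\times f\times g, s) = L(g, s)\,L(\mathrm{sym}^2 f\times g, s)$ using the self-duality of $f$ one obtains
\begin{equation*}
\bigl|\langle|\widetilde{f}|^2, g\rangle_q\bigr|^2 \;\ll_{t_f}\; \frac{H_\infty(t_g)\,q^{\varepsilon}}{q^{2}\,L(\mathrm{sym}^2 g, 1)}\,L(g,\tfrac12)\,L(\mathrm{sym}^2 f\times g, \tfrac12),
\end{equation*}
with an archimedean weight $H_\infty$ truncating the sum to $|t_g|\ll(q t_f)^\varepsilon$.

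The heart of the argument is then to establish the spectral mean value
\begin{equation*}
\sum_{|t_g|\ll (qt_f)^{\varepsilon}}\frac{L(g,\tfrac12)\,L(\mathrm{sym}^2 f\times g,\tfrac12)}{L(\mathrm{sym}^2 g, 1)} \;\ll_{t_f}\; q^{3/2+\varepsilon},
\end{equation*}
which combined with the $q^{-2}$ prefactor yields the desired $q^{-1/2+\varepsilon}$. To reach this I would open $L(\mathrm{sym}^2 f\times g, \tfrac12)$ by its approximate functional equation, observing that the analytic conductor of this degree-$6$ $L$-function in the $q$-aspect is only a small power of $q$ because $\mathrm{sym}^2 f$ and $g$ share the same prime level. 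This produces a Dirichlet polynomial in $\lambda_g(n)$ weighted by deterministic coefficients coming from $\lambda_{\mathrm{sym}^2 f}$, so the surviving spectral averages $\sum_g \lambda_g(n)\,L(g,\tfrac12)/L(\mathrm{sym}^2 g, 1)$ can be controlled by the level-$q$ first moment bound of Theorem~\ref{thm:firstmoment}, with $\chi_D$ replaced by the divisor-sum twist produced via Hecke multiplicativity on $\lambda_g(n)$. The Eisenstein contribution is handled in parallel, using the continuous-spectrum side of Theorem~\ref{thm:firstmoment} applied to the Rankin-Selberg unfolding of $\langle|\widetilde{f}|^2, E_{\mathfrak{a}}(\cdot, \tfrac12+it)\rangle_q$ against $\mathrm{sym}^2 f$.

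The main obstacle is the local bookkeeping: pinning down the exact power of $q$ in Ichino's local Watson factor for two newforms of the same prime level, and determining the precise $q$-aspect analytic conductor of $L(\mathrm{sym}^2 f\times g, s)$ so that the approximate functional equation has length compatible with the first moment input. Once these are in hand, \eqref{eq:L4} falls out directly from the spectral machinery already built for Theorem~\ref{momentthm}, justifying the description of Proposition~\ref{prop:L4} as an ``easy byproduct'' of that analysis.
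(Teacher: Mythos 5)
Your skeleton matches the paper's: spectrally decompose $F=|\widetilde{f}|^2$, note the constant eigenfunction gives $O(q^{-1})$, use Watson/Ichino to write $|\langle F,g\rangle_q|^2 \ll q^{-2+\varepsilon} L(g,\tfrac12)L(\mathrm{sym}^2 f\times g,\tfrac12)/L(\mathrm{sym}^2 g,1)$ with the archimedean weight truncating to $|t_g|\ll q^{\varepsilon}$, and treat the Eisenstein contribution separately (the paper also handles the oldforms $\mathcal{B}_1^{(q)}\cup\mathcal{B}_1^*$, to which Watson for newforms does not directly apply, via the sup-norm bound $O(q^{-1/2})$; you should not omit this). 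The divergence is in how the cuspidal sum is bounded, and this is where your argument has a genuine gap. The paper applies Cauchy's inequality to split the sum into $M_3^{1/2}M_4^{1/2}$ with $M_3=\sum_g L(\mathrm{sym}^2 f\times g,\tfrac12)^2/L(\mathrm{sym}^2 g,1)\ll q^{2+\varepsilon}$ (Proposition \ref{moment}) and $M_4=\sum_g L(g,\tfrac12)^2/L(\mathrm{sym}^2 g,1)\ll q^{1+\varepsilon}$, giving $q^{-2}\cdot q\cdot q^{1/2}=q^{-1/2}$. You instead aim for the first moment of the product directly at $q^{3/2+\varepsilon}$, and your proposed route does not go through.

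Two concrete problems. First, the analytic conductor of $L(\mathrm{sym}^2 f\times g,\tfrac12)$ is $q^4$ (as stated in the paper), not ``a small power of $q$''; the one-piece approximate functional equation therefore has length about $q^2$, which exceeds the size $\asymp q$ of the family, so the resulting Dirichlet polynomial cannot be handled termwise and there is no positivity in the coefficients $\lambda_{\mathrm{sym}^2 f\times g}(n)$ to exploit. One is forced into Cauchy--Schwarz plus the spectral large sieve together with Xiannan Li's bound \eqref{rs} -- which is precisely the content of Proposition \ref{moment} that you are trying to bypass. Second, Theorem \ref{thm:firstmoment} cannot be invoked for the twisted averages $\sum_g \lambda_g(n)L(g,\tfrac12)/L(\mathrm{sym}^2 g,1)$: that theorem bounds $\sum_g L(g\times\chi_D,\tfrac12)/L(\mathrm{sym}^2 g,1)$ for a \emph{primitive quadratic character} $\chi_D$, and its proof uses this structure essentially (Poisson summation modulo $c|D|$, the explicit evaluation of the Gauss-type sums $a(m;c,D)$ in Lemma \ref{lemma:arithmetical}, and nonnegativity of the central values to add in the oldforms and continuous spectrum). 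The coefficients $A(m,k)$ of $\mathrm{sym}^2 f$ are not a quadratic character and the twisted first moment you need is an unsigned object outside the scope of that theorem. The fix is simply to follow the paper: apply Cauchy's inequality and quote Proposition \ref{moment} for $M_3$ and the standard second moment bound for $M_4$.
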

For comparison, the Lindel\"{o}f Hypothesis would give $O(q^{-1+\varepsilon})$ as the bound in \eqref{eq:L4}.

The form of the bound in Theorem \ref{thm:firstmoment} has some elements in common with a result of Kohnen and Sengupta \cite{KohnenSengupta}, who showed
\begin{equation}
 \sum_{g \in B_k} \frac{L(g \times \chi_D, \tfrac12)}{L(\mathrm{sym}^2 g,1)} \ll_{D, \varepsilon} k^{1+\varepsilon},
\end{equation}
where the sum is over even weight $k$ holomorphic Hecke eigenforms for $SL_2(\mathbb{Z})$ and $(-1)^k D > 0$.  However, with some extra work (see Section \ref{section:largeweight} below), their bound can be improved to
\begin{equation}
\label{eq:KS}
 \sum_{g \in B_k} \frac{L(g \times \chi_D, \tfrac12)}{L(\mathrm{sym}^2 g, 1)} \ll_{\varepsilon} (k + \sqrt{|D|}) (k|D|)^{\varepsilon},
\end{equation}
which has the same form as in Theorem \ref{thm:firstmoment}.  Their proof bypasses some of the analytic techniques in this paper and instead relies on the Shimura correspondence and explicit calculations of the Fourier expansion of half-integral weight Poincare series (alternatively, the Petersson formula for half-integral weight).  This approach gives an elegant and direct proof but it is difficult to generalize it to the Maass form setting.

\section{Preliminaries on Maass forms}

Let $q$ be a positive integer which is either 1 or a prime number. 
Let $h_1, h_2: \H \rightarrow \C$ be $\Gamma_0(q)$-invariant functions on the complex upper half-plane $\H$, 
and define the Petersson inner product 
\begin{align}\label{ip}
\langle h_1, h_2 \rangle_q:= \int_{Y_0(q)}h_1(z)\overline{h_2(z)}\frac{dxdy}{y^2},
\end{align}
where $Y_0(q):=\Gamma_0(q)\backslash \mathbb{H}$ is the open modular curve. The spectrum of $L^2(Y_0(q))$ has an orthonormal 
basis consisting of the constant function, Maass forms, and Eisenstein series corresponding 
to the cusps $\a=0, \infty$ of $\Gamma_0(q)$. An orthonormal basis for the cuspidal spectrum of $L^2(Y_0(q))$ is given by 
\begin{align*}
\Cal{B}:=\Cal{B}_q \cup \Cal{B}_1^{(q)} \cup \Cal{B}_1^{*},
\end{align*}
where $\Cal{B}_q$ is an orthonormal basis of Hecke-Maass newforms of weight 0 for $\Gamma_0(q)$, $\Cal{B}_1^{(q)}$ is a basis of Hecke-Maass cusp forms of weight 
0 for $SL_2(\Z)$ which is orthonormal with respect to the inner product (\ref{ip}), and 
\begin{align*}
\Cal{B}_1^{*}:= \{g_q:~g \in \Cal{B}_1\},
\end{align*}
where (see \cite[Proposition 2.6]{ILS})
\begin{align}\label{geqn}
g_q(z):=\left(1-\frac{q\lambda_g^2(q)}{(q+1)^2}\right)^{-1/2}\left(g(qz)-\frac{q^{1/2}\lambda_g(q)}{q+1}g(z)\right),
\end{align}
and $\lambda_g(n)$ is the $n$-th Hecke eigenvalue of $g$. We have 
\begin{align}\label{lambda1}
\lambda_g(q) &=\pm q^{-1/2} \quad \textrm{when} \quad g \in \Cal{B}_q^{(q)}.
%\lambda_g(q) &\ll q^{1/2}, \quad g \in \Cal{B}_1 \cup \Cal{B}_{1}^{*}. 
\end{align}
It is also convenient to use the notation $\mathcal{B}_1^{(1)}$ to denote the same basis as $\mathcal{B}_1^{(q)}$ but rescaled to be orthonormal with respect to the inner product \eqref{ip} with $q=1$.

Let $t_g:=\sqrt{\lambda_g -\frac{1}{4}}$ denote the spectral parameter of $g \in \Cal{B}$ where $\lambda_g$ is the Laplace 
eigenvalue of $g$. Then by Weyl's law we have 
\begin{align}\label{wl1}
\#\{g \in \Cal{B}_q:~ |t_g| \leq T\} \ll qT^2 \quad \textrm{and} \quad \#\{g \in \Cal{B}_1^{(1)}:~t_g \leq T\} \ll T^{2}.
\end{align}

\section{A formula of Waldspurger/Zhang}

%Let $-D < -4$ be an odd fundamental discriminant such that $(q,D)=1$ and $q$ splits in $K=\Q(\sqrt{-D})$. Let 
%$\mathrm{CL}_K$ be the ideal class group of $K$, $h(-D)$ be the class number of $K$, and $\widehat{\mathrm{CL}}_K$ be the group of 
%ideal class group characters $\chi: \mathrm{CL}_K \rightarrow \C$ of $K$. Let $\{\tau^{\sigma}:~\sigma \in \mathrm{CL}_K\}$ be 
%a Galois orbit of Heegner points of discriminant $-D$ on the modular curve $X_0(q)$. When $q$ is prime there 
%are two orbits that are permuted by the Fricke involution $w_q$ which acts on weight $0$ forms $f$ (invariant under $\Gamma_0(q)$) by $(w_q f)(z) = f(-1/qz)$; 
%for this, see pp. 235-236 of \cite{GZ}.

Let $f$ be an arithmetically normalized Hecke-Maass newform of weight 0 for $\Gamma_0(q)$ with spectral parameter $t_f$,
and $\tilde{f}$ be the $L^2$-normalized newform $\langle \tilde{f}, \tilde{f} \rangle_q=1$ 
corresponding to $f$. Note that $\tilde{f}=a_{\tilde{f}}(1)f,$ and  
if $q$ is a prime (see \cite[eq. (2.9)]{BlomerL4}),
\begin{align}\label{symm}
\langle f, f \rangle_q=\frac{L(\textrm{sym}^2 f, 1)}{2\cosh(\pi t_f)}\frac{q^2}{q-1}.
\end{align}

%Let $\Theta_\chi$ be the weight one theta series for $\Gamma_0(D)$ corresponding to $\chi$ 
Define the completed Rankin-Selberg $L$-function 
\begin{align*}
\Lambda(f \times \Theta_\chi,s):=L_{\infty}(f \times \Theta_\chi,s)L(f \times \Theta_\chi,s),
\end{align*}
where 
\begin{align*}
L_{\infty}(f \times \Theta_\chi,s)=4(2\pi)^{-2s}\Gamma(s+it_f)\Gamma(s-it_f).
\end{align*}
Let $\tau \in \Lambda_D(q)$ be a Heegner point on $X_0(q)$. 
Then one has the following central value formula due to Waldspurger/Zhang \cite{Waldspurger} \cite{Z} (though see p. 647 of \cite{HMi} for the explicit form)
\begin{align}\label{zf1}
\Lambda(f \times \Theta_\chi,\tfrac{1}{2})= \frac{4 \langle f, f \rangle_q}{\sqrt{D}} \Big| \sum_{\sigma \in \mathrm{CL}_K}\chi(\sigma)\tilde{f}(\tau^\sigma)\Big|^2.
\end{align}
%where $c(D) > 0$ is a constant which takes finitely many different values {\bf Elaborate?} 
Note that for $q$ a prime, we can use (\ref{symm}) combined with $\cosh(\pi t) \Gamma(\frac12 + it) \Gamma(\frac12 -it) = \pi$ to write 
\eqref{zf1} as 
\begin{align}\label{zf}
L(f \times \Theta_\chi, \tfrac{1}{2})=\nu(q)\frac{L(\textrm{sym}^2 f, 1)}{\sqrt{D}}
\Big|\sum_{\sigma \in \mathrm{CL}_K}\chi(\sigma)\tilde{f}(\tau^\sigma)\Big|^2,
\end{align}
where $\nu(q) = q^2/(q-1)$.  For $q=1$, the formula \eqref{zf} holds with $\nu(1) = 2$.
% For $q=1$, the formula becomes
% \begin{align}\label{zf'}
% L(f \times \Theta_\chi, \tfrac{1}{2})=2 \frac{L(\textrm{sym}^2 f, 1)}{\sqrt{D}}
% \Big|\sum_{\sigma \in \mathrm{CL}_K}\chi(\sigma)\tilde{f}(\tau^\sigma)\Big|^2.
% \end{align}

\section{Proof of Theorem \ref{momentthm}}
Here we give an overview of the proof of Theorem \ref{momentthm}.
Using the orthogonality relations for the characters $\widehat{\mathrm{CL}}_K$, we obtain from (\ref{zf}) the average 
formula 
\begin{align}\label{af}
M_f(D) := \sum_{\chi \in \widehat{\mathrm{CL}}_K}L(f \times \Theta_\chi,\tfrac{1}{2})=\nu(q) L(\textrm{sym}^2f,1)\frac{h(-D)}{\sqrt{D}}
\sum_{\sigma \in \mathrm{CL}_K}|\tilde{f}(\tau^\sigma)|^2.
\end{align}
Up to a scaling factor, we view the right hand side as the $L^2$ norm of $\widetilde{f}$ restricted to the Galois orbit of Heegner points.

Note that \eqref{af} is invariant under the choice of Galois orbit (when $q$ is prime there are two such orbits) which is consistent with the fact that $|\widetilde{f}|^2$ is invariant under the Fricke involution, and so are the central value of the $L$-functions.

For notational convenience, set $F(z)=|\tilde{f}(z)|^2$. Then spectrally decomposing $F$ in $L^2(Y_0(q))$ yields 
\begin{align}
F(z)=\frac{\langle F,1 \rangle_q}{\textrm{vol}(Y_0(q))}   + \sum_{g \in \Cal{B}}\langle F, g \rangle_q g(z) + \frac{1}{4\pi} 
\sum_{\a} \int_{-\infty}^{\infty}\langle F, E_{\a}(\cdot, \tfrac{1}{2}+it) \rangle_q E_\a(z, \tfrac{1}{2}+it)dt, 
\end{align}
where 
\begin{align*}
E_{\a}(z,s)=\sum_{\gamma \in \Gamma_\a \backslash \Gamma_0(q)}\textrm{Im}(\gamma z)^{s} %, \quad z \in \H , \quad \textrm{Re}(s) > 1 
\end{align*}
is the real-analytic Eisenstein series corresponding to the cusp $\a$ of $\Gamma_0(q)$.  From (\ref{af}) we obtain
\begin{multline}\label{af2}
M_f(D) = %\sum_{\chi \in \widehat{\mathrm{CL}}_K}L(f \times \Theta_\chi,\tfrac{1}{2})=
 \nu(q) L(\textrm{sym}^2f,1)\frac{h(-D)}{\sqrt{D}}
\Big[
\frac{h(-D)}{\textrm{vol}(Y_0(q))} \langle F,1 \rangle_q  \\
 + \sum_{g \in \Cal{B}}\langle F, g \rangle_q W_{D,g}
 + 
\sum_{\a} \int_{-\infty}^{\infty}\langle F, E_{\a}(\cdot, \tfrac{1}{2}+it) \rangle_q W_{D,\a}(t)\frac{dt}{4\pi} \Big],
\end{multline} 
% \begin{align}\label{af2}
% &\sum_{\chi \in \widehat{\mathrm{CL}}_K}L(f \times \Theta_\chi,\frac{1}{2})=\\
% & c(D, t_f)\frac{q^2}{q-1}L(\textrm{sym}^2f,1)\frac{h(-D)^2}{\sqrt{D}}\frac{1}{\textrm{vol}(Y_0(q))} \langle F,1 \rangle_q \notag \\
% & + c(D, t_f)\frac{q^2}{q-1}L(\textrm{sym}^2f,1)\frac{h(-D)}{\sqrt{D}} \sum_{g \in \Cal{B}}\langle F, g \rangle_q W_{D,g} \notag \\
% & + c(D, t_f)\frac{q^2}{q-1}L(\textrm{sym}^2f,1)\frac{h(-D)}{\sqrt{D}} 
% \sum_{\a} \int_{\R}\langle F, E_{\a}(\cdot, \frac{1}{2}+it) \rangle_q W_{D,\a}(t)\frac{dt}{4\pi}, \notag
% \end{align} 
where the hyperbolic Weyl sums are defined by
\begin{align*}
W_{D,g}:=\sum_{\sigma \in \mathrm{CL}_K}g(\tau^{\sigma}), \qquad
% \end{align*}
% and 
% \begin{align*}
W_{D,\a}(t):=\sum_{\sigma \in \mathrm{CL}_K}E_{\a}(\tau^{\sigma}, \tfrac{1}{2}+it).
\end{align*}

Using $\textrm{vol}(Y_0(q)) = \frac{\pi (q+1)}{3}$, and $\langle F, 1 \rangle_q = 1$, 
we find that the constant eigenfunction in (\ref{af2}) gives the main term appearing in \eqref{e8.4}.  
% \begin{align}\label{e8.1}
% \frac{3}{\pi}c(D,t_f)\frac{h(-D)^2}{\sqrt{D}}\frac{q^2}{q^2-1}L(\textrm{sym}^2f,1).
% \end{align}
Proposition \ref{prop:eisenstein} bounds the continuous spectrum which leads to a contribution to $M_f(D)$ of size $O(D^{5/12} (qD)^{\varepsilon} )$.  The contribution from $g \in \mathcal{B}_1^{(q)} \cup \mathcal{B}_1^*$ give a bound of the same size as for the Eisenstein series, by Lemma \ref{lemma:oldforms}.  By Lemma \ref{lemma:Bqestimate}, the contribution to $M_f(D)$ from $\mathcal{B}_q$ gives 
\begin{equation}
 \ll (qD)^{\varepsilon} \min(q D^{7/16}, q^{3/4} D^{1/4} + q^{1/4} D^{1/2}),
\end{equation}
as claimed in Theorem \ref{momentthm}.

% We remark tangentially that in case $D$ is very small compared to $q$ then one can estimate \eqref{af} by the number of terms in the sum times the supremum bound on $\widetilde{f}(z)$.  Harcos and Templier \cite{HT} have shown the bound $\widetilde{f}(z) \ll_{t_f} q^{-1/6 + \varepsilon}$ which leads to $M_f(D) \ll q^{2/3} D^{1/2} (qD)^{\varepsilon}$.  This is stronger than Theorem \ref{momentthm} for $q \gg D^{3+\varepsilon}$.

\section{Period integral formulas}
In this section we will evaluate the magnitude of the Weyl sums $W_{D,g}$, the inner products $\langle F, g\rangle_q$, and the analogous quantities with the continuous spectrum.  First suppose $g \in \mathcal{B}_q$.  Applying (\ref{zf}) with $\chi=\chi_0$ the trivial class group character, we have 
\begin{align}
\abs{W_{D,g}}^2=\frac{\sqrt{D}L(g \times \chi_D, \tfrac{1}{2})L(g,\tfrac{1}{2})}{\nu(q) L(\mathrm{sym}^2 g,1)}  .
\end{align}
It turns out that a similar formula holds for $g \in \mathcal{B}_1^{(q)}$ and $g_q \in \mathcal{B}_1^*$.  Using the nonnegativity of $L(g,1/2)$ and $L(g \times \chi_D, 1/2)$ we shall deduce the following
\begin{lemma}\label{l1} For $g \in \Cal{B}$ we have 
\begin{align}\label{e4.1}
W_{D,g} = \theta_{g,D} \frac{D^{1/4}}{q^{1/2}} \frac{L(g \times \chi_D, \tfrac{1}{2})^{1/2} L(g,\tfrac{1}{2})^{1/2}}{L(\mathrm{sym}^2 g,1)^{1/2}},
\end{align}
where $\theta_{g,D}$ is some complex number satisfying $|\theta_{g,D}| \leq 10$.  
% Furthermore, if $g \in \mathcal{B}_1^{(q)} \cup \mathcal{B}_1^*$, we have
% \begin{equation}
% |W_{D,g}|^2 \ll_{\varepsilon}  (1+\abs{t_g})^{A} q^{-1} D^{\frac{5}{6}+\varepsilon}L(g,\tfrac{1}{2}),
% \end{equation}
% for some fixed $A > 0$.
\end{lemma}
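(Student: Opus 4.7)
My plan is to specialize the Waldspurger/Zhang formula \eqref{zf} to the trivial class group character $\chi_0 \in \widehat{\mathrm{CL}}_K$, and use the factorization $L(g \times \Theta_{\chi_0}, s) = L(g, s) L(g \times \chi_D, s)$ (coming from $\zeta_K(s) = \zeta(s) L(s, \chi_D)$) together with the nonnegativity of both central values (Katok--Sarnak, Guo, Waldspurger) to extract a legitimate square root.  The phase of $W_{D,g}$ itself is absorbed into $\theta_{g,D}$.  The formula must then be verified separately on each of the three pieces of $\mathcal{B}$, with the implicit constants checked to be absolutely bounded.

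For $g \in \mathcal{B}_q$, formula \eqref{zf} with $\chi_0$ and $\nu(q) = q^2/(q-1)$ gives directly
\[
|W_{D,g}|^2 = \frac{\sqrt{D}(q-1)}{q^2} \cdot \frac{L(g,\tfrac12) L(g\times \chi_D,\tfrac12)}{L(\mathrm{sym}^2 g, 1)},
\]
which is the lemma with $|\theta_{g,D}| = \sqrt{(q-1)/q} \le 1$.  For $g \in \mathcal{B}_1^{(q)}$, I would reduce to the level-one Waldspurger/Zhang formula via the geometric fact that the projection $\pi : X_0(q) \to X_0(1)$, $\tau \mapsto \tau$, restricts to a bijection $G\tau \to \Lambda_D(1)$; both sets have size $h(-D)$ and $G = \mathrm{CL}_K$ acts transitively on each.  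Since $\mathcal{B}_1^{(q)}$ rescales level-one forms to be $L^2$-normalized at level $q$, one has $g = g_1/\sqrt{q+1}$ where $g_1$ is the level-one $L^2$-normalized form, and the bijection above gives $W_{D,g} = (q+1)^{-1/2} \sum_{\tau' \in \Lambda_D(1)} g_1(\tau')$.  Applying \eqref{zf} at level one (with $\nu(1)=2$) completes this case with $|\theta_{g,D}| = \sqrt{q/(2(q+1))} \le 1$.

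The subtle case is $g_q \in \mathcal{B}_1^*$.  Expanding via \eqref{geqn} gives
\[
W_{D,g_q} = C \Big(\sum_{\sigma \in G} g(q\tau^\sigma) - y \sum_{\sigma \in G} g(\tau^\sigma)\Big), \quad y = \frac{q^{1/2}\lambda_g(q)}{q+1}, \quad C = (1-y^2)^{-1/2},
\]
where $g$ is the underlying level-one form.  The key collapsing identity is
\[
\sum_{\sigma \in G} g(q\tau^\sigma) = \sum_{\sigma \in G} g(\tau^\sigma),
\]
valid for all level-one $g$; this follows because both $\tau \mapsto \tau$ and $\tau \mapsto q\tau$ are projections $X_0(q) \to X_0(1)$ (the source and target of the universal $q$-isogeny) whose restrictions to $G\tau$ are bijections onto $\Lambda_D(1)$---ideal-theoretically, the second is multiplication by the prime $\mathfrak{q}$ above $q$ on $\mathrm{CL}_K$, which is a bijection.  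Hence $W_{D,g_q} = C(1-y) W_{D,g}$, and $|C(1-y)|^2 = (1-y)/(1+y)$ is absolutely bounded (indeed $|y| < 1$ strictly for all $q \ge 2$ from $|\lambda_g(q)| \le 2$ and $(q+1)^2 \ge 4q$, and $|y| = O(q^{-1/2 + 7/64})$ by Kim--Sarnak).  Combining with the $\mathcal{B}_1^{(q)}$ case completes the proof.

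The main obstacle is the $\mathcal{B}_1^*$ case, which requires both the collapsing identity $\sum_\sigma g(q\tau^\sigma) = \sum_\sigma g(\tau^\sigma)$ and careful bookkeeping of the normalization in \eqref{geqn} to see the stated $q^{-1/2}$ factor emerge with an absolutely bounded $\theta_{g,D}$; the other two cases are essentially direct applications of \eqref{zf}.
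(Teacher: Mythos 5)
Your proof is correct and follows essentially the same route as the paper: specialize the Waldspurger/Zhang formula to $\chi_0$ with nonnegativity of the central values, reduce $\mathcal{B}_1^{(q)}$ to the level-one formula via the projection and the rescaling $c^2 = 1/(q+1)$, and handle $\mathcal{B}_1^*$ through \eqref{geqn}. The only difference is one of detail: the paper dismisses the $\mathcal{B}_1^*$ case with ``a similar argument using \eqref{geqn} and the trivial bound $\lambda_g(q) \ll q^{1/2}$,'' whereas you correctly supply the collapsing identity $\sum_\sigma g(q\tau^\sigma) = \sum_\sigma g(\tau^\sigma)$ and the resulting factor $(1-y)/(1+y)$ that this argument implicitly requires (your passing appeal to $|\lambda_g(q)|\le 2$ is unnecessary since, as you note, Kim--Sarnak already keeps $|y|$ bounded away from $1$).
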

Here we abused notation slightly; if $g_q \in \mathcal{B}_1^*$ then $W_{D, g_q}$ is given by \eqref{e4.1} with $g$ (not $g_q$) appearing on the right hand side.

\begin{proof} If $g \in \Cal{B}_1^{(q)}$ with $\langle g, g \rangle_q=1$, let $\tilde{g}$ be the form associated 
to $g$ in $\mathcal{B}_1^{(1)}$, so that $\langle \tilde{g}, \tilde{g} \rangle_1=1$. Then $g=c\tilde{g}$ where $c^2=1/(q+1)$. 
Following the proof of Harcos and Michel \cite[Theorem 6]{HMi}, we find that
\begin{align*}
\abs{W_{D,g}}^2&=c^2\Big|\sum_{\sigma \in \mathrm{CL}_K}\tilde{g}(\tau^{\sigma})\Big|^2,
\end{align*} 
where $\{\tau^{\sigma}:~\sigma \in \mathrm{CL}_K\}$ is the set of Heegner points of discriminant $-D$ on the 
\textit{level 1} modular curve $X_0(1)$. Then by (\ref{zf1}) with $\chi=\chi_0$, 
\begin{align*}
\abs{W_{D,g}}^2=
\frac{\sqrt{D}}{2(q+1)} \frac{L(g \times \chi_D,\frac{1}{2})L(g,\frac{1}{2})}{L(\mathrm{sym}^2 g, 1)}.
\end{align*}
%Then (\ref{e4.2}) follows for $g \in \mathcal{B}_1^{(q)}$ by a subconvexity bound of Conrey and Iwaniec \cite{CI} we have $L(g \times \chi_D, \frac12) \ll (1 + |t_g|)^{A} D^{1/3 + \varepsilon}$ for some fixed $A > 0$.
 %from these estimates.
If $g_q \in \Cal{B}_1^{*}$, a similar argument using (\ref{geqn}) and the trivial bound $\lambda_g(q) \ll q^{\frac{1}{2}}$ yields 
(\ref{e4.1}) in this case also.
\end{proof}

By Watson's formula \cite{W}, for $g \in \mathcal{B}_q$, we have
\begin{align}
\big|\langle F, g \rangle_q\big|^2=\Big|\int_{Y_0(q)}|\tilde{f}(z)|^2 g(z)\frac{dxdy}{y^2} \Big|^2=\frac{1}{4q^2}
\frac{\Lambda(f \times f \times g,\frac{1}{2})}{\Lambda(\textrm{sym}^2 f,1)^2 \Lambda(\textrm{sym}^2 g, 1)},
\end{align}
where $\Lambda(f \times f \times g,s)$ is the completed triple product $L$-function (see e.g. \cite[chapter 4]{W}). A calculation with the archimedean place (see Section 4 of \cite{BKY} for a convenient reference) gives
\begin{lemma}  Suppose $g \in \mathcal{B}_q$.  Then
\begin{align}
\label{eq:Triple}
\langle F, g \rangle_q = \theta_{f, g} q^{-1} \frac{L(\mathrm{sym}^2 f \times g,\tfrac{1}{2})^{1/2} L(g, \tfrac12)^{1/2}}{L(\mathrm{sym}^2 f, 1) L(\mathrm{sym}^2 g, 1)^{1/2}},
\end{align}
where $\theta_{f,g}$ is some complex constant which satisfies the bound
\begin{equation}
\label{eq:tftg}
 |\theta_{f, g}| \ll \exp(-\pi r(t_f, t_g)), \qquad r(t_f, t_g) = \begin{cases} 0,  &\text{if } |t_g| \leq 2|t_f|, \\ |t_g - 2t_f|,  &\text{if } |t_g| > 2|t_f|. \end{cases}
\end{equation}
\end{lemma}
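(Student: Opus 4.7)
The plan is to derive both parts of the statement from Watson's formula (shown above), combined with the factorization of the triple product $L$-function
\[\Lambda(f \times f \times g, s) = \Lambda(\mathrm{sym}^2 f \times g, s) \cdot \Lambda(g, s),\]
which is a consequence of $f \otimes f \cong \mathrm{sym}^2 f \oplus \mathbf{1}$ at the level of local Langlands parameters at every place, finite and infinite. Substituting this factorization into Watson's formula for $|\langle F, g\rangle_q|^2$ and taking square roots, with a suitable complex argument absorbed into $\theta_{f,g}$, produces \eqref{eq:Triple} with
\[|\theta_{f,g}|^2 = \frac{1}{4} \cdot \frac{L_\infty(\mathrm{sym}^2 f \times g, \tfrac12) \, L_\infty(g, \tfrac12)}{L_\infty(\mathrm{sym}^2 f, 1)^2 \, L_\infty(\mathrm{sym}^2 g, 1)}.\]
This gives the identity in \eqref{eq:Triple}; the bound \eqref{eq:tftg} reduces to analysing this ratio of archimedean factors.

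To bound $|\theta_{f,g}|$, the next step is Stirling's formula. Since $|\Gamma_{\mathbb{R}}(\sigma + it)| \asymp |t|^{(\sigma-1)/2} \exp(-\pi |t|/4)$ for $|t|$ large, the exponential part of $|\theta_{f,g}|$ is controlled by the signed sum of the imaginary parts (in absolute value) of all gamma factors. The numerator has eight archimedean gamma factors with imaginary parts drawn with multiplicity from $\{\pm 2t_f \pm t_g,\ \pm t_g,\ \pm t_g\}$, while the denominator has nine gamma factors with imaginary parts $\{0,\ \pm 2t_f\}$ (doubled because of the square) together with $\{0,\ \pm 2t_g\}$. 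Combining these contributions reduces the exponent of $|\theta_{f,g}|$ to a fixed multiple of $2|t_f| - \tfrac{1}{2}(|2t_f + t_g| + |2t_f - t_g|)$, up to a $O(\log(1+|t_f|+|t_g|))$ polynomial error.

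Finally, $|2t_f + t_g| + |2t_f - t_g|$ equals $4|t_f|$ when $|t_g| \leq 2|t_f|$ and $2|t_g|$ otherwise, which is precisely the dichotomy in the definition of $r(t_f,t_g)$. In the first case the exponentials in numerator and denominator cancel, so $|\theta_{f,g}|$ has only polynomial growth; in the second case the net exponent is proportional to $-(|t_g| - 2|t_f|) = -|t_g - 2t_f|$ (using the convention $t_f, t_g \geq 0$ permitted by the $t \mapsto -t$ symmetry of the spectral parameters), yielding the exponential decay claimed in \eqref{eq:tftg}. The main obstacle is essentially bookkeeping: correctly enumerating all eight archimedean gamma factors of the triple product together with those of the Rankin--Selberg factors, and matching their multiplicities. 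The explicit form of the triple product archimedean factor worked out in Section~4 of \cite{BKY} provides a convenient direct check and avoids any sign or normalisation mistakes.
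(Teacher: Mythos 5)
Your proposal is correct and follows essentially the same route as the paper: Watson's formula combined with the factorization $\Lambda(f\times f\times g,s)=\Lambda(\mathrm{sym}^2 f\times g,s)\,\Lambda(g,s)$, with the decay of $\theta_{f,g}$ extracted from a Stirling analysis of the archimedean factors (the paper simply cites Section~4 of \cite{BKY} for this computation, which you carry out explicitly). Your bookkeeping of the eight numerator and nine denominator gamma factors and the resulting dichotomy at $|t_g|=2|t_f|$ is the intended argument, and the precise constant in the exponent is immaterial here since only rapid decay for $|t_g|\gg |t_f|$ is used.
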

In fact a more precise estimate is possible but we are not concerned with the $t_f$-behavior in this paper.
Formula (38) of \cite{Nelson} extends \eqref{eq:Triple} to $g \in \mathcal{B}_1^{(q)}$ 
and using the Fricke involution we see that a similar formula holds with $g_q \in \mathcal{B}_1^*$.
%However, we have trivially that
% \begin{equation}
%  |\langle F, g \rangle_q| \leq \sup_{z \in \mathbb{H}} |g(z)| \ll 
% \end{equation}

Next we consider the Eisenstein series.  By unfolding (see Section 5 of \cite{BlomerL4}) we have
\begin{align}
\label{eq:innerproductEisenstein}
\langle F, E_{\mathfrak{a}}(\cdot, s) \rangle_q = |a_{\tilde{f}}(1)|^2\frac{2L(f \times f, s)}{\zeta(2s)}
\frac{\Gamma_{\R}(s)\Gamma_{\R}(s+2it_f)\Gamma_{\R}(s-2it_f)}{2^{s+2}\Gamma_{\R}(s+1)}.
\end{align}
Since $F$ is invariant under the Fricke involution that switches the cusps $0$ and $\infty$, both cusps give the same inner product in \eqref{eq:innerproductEisenstein} (see Section 13.2 of \cite{IwTopics}).
By (\ref{symm}) we conclude 

\begin{align}\label{e7.1}
\langle F, E_{\mathfrak{a}}(\cdot, \tfrac{1}{2}+it) \rangle_q = \theta_{f,t} q^{-1} \frac{|\zeta(\thalf + it)| |L(\mathrm{sym}^2 f, \tfrac12 + it)|}{L(\mathrm{sym}^2 f, 1) |\zeta(1+2it)|},
\end{align}
where $\theta_{f,t}$ satisfies \eqref{eq:tftg} with $t_g$ replaced by $t$.

By (10.30) of \cite{DFI} which generalizes to level $q$ a formula of Gross and Zagier \cite[p. 248]{GZ}, 
\begin{equation}
\label{eq:WeylEisenstein}
\sum_{\mathfrak{a}} W_{D, \mathfrak{a}}(t) = \theta_{t, D} \frac{D^{1/4}}{q^{1/2}} \frac{|\zeta(\tfrac{1}{2}+it)| |L( \tfrac{1}{2}+it, \chi_D)| }{|\zeta(1+2it)|},
\end{equation}
where $\theta_{t,D}$ is some complex number satisfying $|\theta_{t,D}| \leq 10$.  To be precise, we should remark that formula of \cite{DFI} is in a different form than \eqref{eq:WeylEisenstein}; they had an individual cusp on the left hand side, and the sum was over both Galois orbits of Heegner points.  To derive \eqref{eq:WeylEisenstein} we use the fact that the Fricke involution switches the two Galois orbits, and also switches the two Eisenstein series, and so \eqref{eq:WeylEisenstein} follows.

At this point it is easy to establish the following estimate.
\begin{proposition} 
\label{prop:eisenstein}
We have 
\begin{align}\label{e7.3}
\sum_{\a} \int_{-\infty}^{\infty}\langle F, E_{\a}(\cdot, \frac{1}{2}+it) \rangle_q W_{D,\a}(t)\frac{dt}{4\pi} \ll_{t_f, \varepsilon} q^{-1+\varepsilon} D^{\frac{5}{12}+\varepsilon}.
\end{align}
\end{proposition}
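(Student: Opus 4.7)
The plan is to combine the explicit formulas \eqref{e7.1} and \eqref{eq:WeylEisenstein} into a single closed-form expression and then estimate the resulting one-dimensional integral. Since $F$ is invariant under the Fricke involution that interchanges the two cusps, the inner product $\langle F, E_{\mathfrak{a}}(\cdot, \tfrac{1}{2}+it)\rangle_q$ takes the same value at $\mathfrak{a}=0$ and $\mathfrak{a}=\infty$, so I would pull this common value outside the sum over cusps and apply \eqref{eq:WeylEisenstein} to the remaining sum. This identifies the integrand as
$$\theta_{f,t}\,\theta_{t,D}\, \frac{D^{1/4}}{q^{3/2}}\, \frac{|\zeta(\tfrac{1}{2}+it)|^2\, |L(\mathrm{sym}^2 f, \tfrac{1}{2}+it)|\, |L(\tfrac{1}{2}+it, \chi_D)|}{L(\mathrm{sym}^2 f, 1)\, |\zeta(1+2it)|^2}.$$

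Next I would exploit the exponential decay $|\theta_{f,t}| \ll \exp(-\pi r(t_f, t))$ from \eqref{eq:tftg}, which restricts the effective range of integration to a compact interval $|t| \leq C(1+|t_f|)$ at the cost of an error smaller than any negative power of $qD$. On this interval, $|\zeta(\tfrac{1}{2}+it)|$ and $|L(\mathrm{sym}^2 f, \tfrac{1}{2}+it)|$ are bounded by constants depending only on $t_f$, and $L(\mathrm{sym}^2 f, 1)^{-1} \ll_{t_f} 1$. The only delicate point is the behavior of $|\zeta(1+2it)|^{-2}$ near $t=0$, but this factor vanishes to second order there, and elsewhere $|\zeta(1+2it)|^{-1} \ll \log(2+|t|)$, contributing at most a power of $\log D$.

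The key analytic input is a strong subconvexity bound for $L(\tfrac{1}{2}+it, \chi_D)$. Since $t$ ranges over a bounded set, the Weyl-type bound of Conrey and Iwaniec \cite{CI}, namely $|L(\tfrac{1}{2}+it, \chi_D)| \ll (|D|(1+|t|))^{1/6+\varepsilon}$, yields $|L(\tfrac{1}{2}+it, \chi_D)| \ll_{t_f, \varepsilon} D^{1/6+\varepsilon}$. Combining this with the preceding estimates, the integral in \eqref{e7.3} is bounded by
$$\ll_{t_f, \varepsilon}\, q^{-3/2}\, D^{1/4}\, D^{1/6+\varepsilon}\, =\, q^{-3/2}\, D^{5/12+\varepsilon},$$
which is in fact stronger in $q$ than the stated bound $q^{-1+\varepsilon} D^{5/12+\varepsilon}$. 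The main obstacle is precisely this last step: the Burgess-Heath-Brown hybrid bound $|L(\tfrac{1}{2}+it, \chi_D)| \ll |D|^{3/16+\varepsilon}$ would only give the exponent $1/4 + 3/16 = 7/16$ in $D$, which is insufficient to reach the target $5/12$. The Weyl exponent $1/6$ of Conrey-Iwaniec is essential here, and it relies on the nonnegativity of $L(g \times \chi_D, \tfrac{1}{2})$ to convert their second-moment average into a pointwise bound.
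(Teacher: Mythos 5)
Your overall route is exactly the paper's: combine \eqref{e7.1} with \eqref{eq:WeylEisenstein}, use the exponential decay of $\theta_{f,t}$ to truncate the $t$-integral, control $|\zeta(1+2it)|^{-1}$ by standard lower bounds, and invoke the Conrey--Iwaniec bound $|L(\tfrac12+it,\chi_D)| \ll D^{1/6}(qD)^{\varepsilon}$ to get the exponent $\tfrac14+\tfrac16 = \tfrac{5}{12}$ in $D$. That part is fine.

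However, there is a genuine error in your treatment of $L(\mathrm{sym}^2 f, \tfrac12+it)$. You assert that on the truncated range this factor is ``bounded by constants depending only on $t_f$,'' but $f$ is a newform of level $q$, so $\mathrm{sym}^2 f$ has conductor $q^2$ and $L(\mathrm{sym}^2 f, \tfrac12+it)$ grows with $q$. The best unconditional input is the convexity bound $L(\mathrm{sym}^2 f, \tfrac12+it) \ll q^{1/2+\varepsilon}$ (which is what the paper uses); your claim of $O_{t_f}(1)$ amounts to assuming (better than) the Lindel\"{o}f hypothesis for $\mathrm{sym}^2 f$ in the level aspect. This is precisely why you arrive at the ``improved'' factor $q^{-3/2}$: the correct computation is $q^{-1}\cdot q^{-1/2}\cdot q^{1/2+\varepsilon} = q^{-1+\varepsilon}$, matching the stated bound, and no saving in $q$ beyond $q^{-1+\varepsilon}$ is available by this argument. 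Your concluding paragraph is right that the Weyl-quality exponent $1/6$ (rather than $3/16$) is what delivers $D^{5/12}$, and the rest of your estimates (the harmless singularity of $|\zeta(1+2it)|^{-2}$ at $t=0$, the log-power losses, the identification of the two cusps via the Fricke involution) are correct.
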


\begin{proof}
By the rapid decay of $\theta_{f,t}$ with $|t| > (qD)^{\varepsilon}$, we may truncate the $t$-integral at this point with an acceptable error term.
By \eqref{e7.1}, \eqref{eq:WeylEisenstein}, the convexity bound $L(\mathrm{sym}^2 f, 1/2 + it) \ll q^{1/2 +\varepsilon}$, the Conrey and Iwaniec \cite{CI} bound $L(1/2 + it, \chi_D) \ll D^{1/6} (qD)^{\varepsilon}$  (for $t$ small), and standard \cite{T} lower bounds on $|\zeta(1 + 2it)|$, we immediately obtain \eqref{e7.3}.
\end{proof}

\section{Contribution of the spectrum $\mathcal{B}$}
\label{section:cuspforms}
 In this section we analyze the cusp form sum in \eqref{af2}.  The first step is to finitize the sum over $g$.  
Using self-adjointness of the hyperbolic Laplacian $\Delta=-y^2(\partial_x^2+\partial_y^2)$, Stokes' theorem, 
the Cauchy-Schwarz inequality, and the calculations in Section 5 of \cite{BlomerL4},  we have
\begin{align}\label{Lineq}
\langle F, g \rangle_q = (\frac{1}{4}+t_g^2)^{-A} \langle \Delta^{A} F, g \rangle_q \leq (\frac{1}{4}+t_g^2)^{-A}\norm{\Delta^A F}_2 \norm{g}_{2}
& \ll_{t_f,A} q^{1+\varepsilon} (1+\abs{t_g})^{-A} 
\end{align}
for each integer $A \geq 0$.  Alternatively, one could apply Watson's formula \cite{W} (for $g$ a newform) or Nelson's extension \cite{Nelson} for $g \in \mathcal{B}_1^{(q)} \cup \mathcal{B}_1^*$ to obtain this type of bound.
Since $\langle F, g\rangle_q$ and $W_{D,g}$ grow polynomially in $q$, $D$, etc. we may impose the truncation $|t_g| \leq (qD)^{\varepsilon}$ with a very small error term.  

First we deal with the oldforms by showing
\begin{lemma}  
\label{lemma:oldforms}
We have
\begin{align}\label{estimate5.3}
\sum_{\substack{g \in \Cal{B}_1^{(q)} \cup \Cal{B}_1^{*}\\ t_g \leq (qD)^{\varepsilon}}}\langle F, g \rangle_q W_{D,g}  \ll q^{-1+\varepsilon}D^{\frac{5}{12}+\varepsilon}.
\end{align}
\end{lemma}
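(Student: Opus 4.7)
The plan is to directly substitute the explicit formulas from Section 5 and then invoke known subconvexity bounds, with no new analytic input required. First I would use the rapid decay of $\theta_{f,g}$ given by \eqref{eq:tftg} to truncate the spectral sum to $|t_g| \leq (qD)^\varepsilon$ at the cost of a negligible error. Since $\Cal{B}_1^{(q)}$ and $\Cal{B}_1^{*}$ are both built from the level $1$ Maass basis $\Cal{B}_1^{(1)}$, Weyl's law \eqref{wl1} for $SL_2(\Z)$ leaves only $O((qD)^\varepsilon)$ surviving terms, so it suffices to bound the contribution of an individual $g$.

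For such a $g$, I would combine Lemma \ref{l1} with the extension of \eqref{eq:Triple} to $g \in \Cal{B}_1^{(q)} \cup \Cal{B}_1^{*}$ (Nelson's formula (38) in \cite{Nelson} for the former, composed with the Fricke involution for the latter) to obtain
\begin{equation*}
\langle F, g\rangle_q \, W_{D,g} \ll_{t_f} \frac{D^{1/4}}{q^{3/2}} \cdot \frac{L(\mathrm{sym}^2 f \times g, \tfrac12)^{1/2} \, L(g, \tfrac12) \, L(g \times \chi_D, \tfrac12)^{1/2}}{L(\mathrm{sym}^2 f, 1) \, L(\mathrm{sym}^2 g, 1)},
\end{equation*}
where the prefactor $q^{-3/2}$ is the product of the $q^{-1}$ from the triple product and the $q^{-1/2}$ from the Weyl sum.

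To finish, I would estimate the $L$-factors using standard tools. Since $\mathrm{sym}^2 f$ is a $GL_3$ cusp form of conductor $q^2$ and $g$ is a $GL_2$ form of level $1$ with bounded spectral parameter, the analytic conductor of the $GL_6$ Rankin-Selberg $L$-function is $\asymp q^4$, so convexity gives $L(\mathrm{sym}^2 f \times g, \tfrac12)^{1/2} \ll q^{1/2+\varepsilon}$. For the $D$-aspect, the Conrey-Iwaniec bound \cite{CI} (which crucially uses nonnegativity of these central values) yields $L(g \times \chi_D, \tfrac12)^{1/2} \ll D^{1/6+\varepsilon}$. Combined with Hoffstein-Lockhart-type lower bounds $L(\mathrm{sym}^2 f, 1) \gg q^{-\varepsilon}$ and the triviality of the remaining level-$1$ $L$-values, we obtain
\begin{equation*}
\frac{D^{1/4}}{q^{3/2}} \cdot q^{1/2+\varepsilon} \cdot D^{1/6+\varepsilon} \ll q^{-1+\varepsilon} D^{5/12+\varepsilon},
\end{equation*}
which matches the claimed bound after summing the $O((qD)^\varepsilon)$ terms.

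The only bookkeeping subtlety I anticipate is verifying that Nelson's extension produces the same $q^{-1}$ prefactor as in \eqref{eq:Triple}, after accounting for the rescaling $g = (q+1)^{-1/2}\tilde{g}$ that defines $\Cal{B}_1^{(q)}$ and the Fricke involution that handles $\Cal{B}_1^{*}$; this is routine given the cited reference. There is no genuine analytic obstacle: the bound ultimately rests on convexity on the $GL_6$ side paired with the Conrey-Iwaniec third moment bound on the $GL_2 \times \chi_D$ side.
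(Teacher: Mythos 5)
Your proof is correct and lands on the same numerology, but it takes a genuinely different route for the inner product $\langle F, g\rangle_q$. The paper's own proof is softer there: it simply bounds $|\langle F, g\rangle_q| \leq \|g\|_\infty \langle F, 1\rangle_q \ll t_g^{1/2} q^{-1/2}$, using only the trivial sup-norm bound for a level $1$ Hecke cusp form together with the $q^{-1/2}$ coming from $L^2$-normalization on $Y_0(q)$ (and inspection of \eqref{geqn} for $\mathcal{B}_1^*$); no $L$-function input is needed on that side. You instead invoke Nelson's extension of Watson's formula to oldforms and then apply convexity for the degree-six $L$-function $L(\mathrm{sym}^2 f \times g, \tfrac12)$ of conductor $q^4$, which gives $\langle F, g\rangle_q \ll q^{-1} \cdot q^{1/2+\varepsilon} = q^{-1/2+\varepsilon}$ --- the same exponent. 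Both are valid; the paper's argument is more elementary and sidesteps exactly the bookkeeping you flag (verifying that Nelson's formula carries the same $q^{-1}$ prefactor for the rescaled oldforms and their Fricke images), whereas your route has the aesthetic advantage of treating old and new forms uniformly via period formulas and would improve automatically under any subconvexity input for $L(\mathrm{sym}^2 f \times g, \tfrac12)$. Your treatment of the Weyl sum (Lemma \ref{l1} plus the Conrey--Iwaniec bound, giving $W_{D,g} \ll q^{-1/2} D^{5/12+\varepsilon}$) and the count of $O((qD)^{\varepsilon})$ surviving oldforms coincide exactly with the paper. One small remark: the truncation to $|t_g| \leq (qD)^{\varepsilon}$ is already built into the statement of the lemma, so your opening truncation step is not needed here (it belongs to the surrounding discussion via \eqref{Lineq}).
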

\begin{proof}
First we note that for $g \in \mathcal{B}_1^{(q)}$, we have $\sup_{z \in \mathbb{H}} |g(z)| \ll t_g^{1/2} q^{-1/2}$ where $t_g^{1/2}$ is the trivial bound on the sup norm of a level $1$ Hecke cusp form, and the $q^{-1/2}$ comes from the normalization with respect to $Y_0(q)$.  Similarly, inspection of \eqref{geqn} shows that the same bound holds for $g_q \in \mathcal{B}_1^*$.  Thus for $g \in \mathcal{B}_1^{(q)} \cup \mathcal{B}_1^*$, we have $|\langle F, g \rangle_q | \ll t_g^{1/2} q^{-1/2} \langle F, 1 \rangle_q = t_g^{1/2} q^{-1/2}$.  For the Weyl sum, we note by Lemma 5.1 and the Conrey and Iwaniec bound \cite{CI}, we have for $|t_g| \leq (qD)^{\varepsilon}$ that $W_{D,g} \ll q^{-1/2} D^{5/12} (qD)^{\varepsilon}$.  Using the fact that there are $O((qD)^{2\varepsilon})$ oldforms with $t_g \leq (qD)^{\varepsilon}$, we finish the proof.
\end{proof}
Note that the oldforms contribute the same amount as the Eisenstein series (compare Lemma \ref{lemma:oldforms} with Proposition \ref{prop:eisenstein}).
The main work remains to estimate the newforms.  
\begin{lemma}
\label{lemma:Bqestimate}
 We have
\begin{equation}
\label{eq:Bqestimate}
 \sum_{\substack{g \in \mathcal{B}_q \\ |t_g| \leq (qD)^{\varepsilon}}} \langle F, g \rangle_q W_{D,g} \ll (qD)^{\varepsilon} \min(D^{7/16}, \frac{D^{1/4}}{q^{1/4}} + \frac{D^{1/2}}{q^{3/4}}).
\end{equation}
\end{lemma}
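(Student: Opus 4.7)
The plan is to combine Cauchy-Schwarz on the sum over $g$ with the explicit period-integral identities of Lemma \ref{l1} and \eqref{eq:Triple}, which together yield
\[
\langle F, g\rangle_q W_{D,g} \ll (qD)^\varepsilon \cdot \frac{D^{1/4}}{q^{3/2}} \cdot \frac{L(\mathrm{sym}^2 f \times g, \tfrac12)^{1/2}\, L(g, \tfrac12)\, L(g \times \chi_D, \tfrac12)^{1/2}}{L(\mathrm{sym}^2 g, 1)}
\]
after the rapid decay in $|t_g|$ is used to truncate. The two bounds inside the minimum in \eqref{eq:Bqestimate} are to be obtained from two different applications of Cauchy-Schwarz.

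For the bound $\tfrac{D^{1/4}}{q^{1/4}} + \tfrac{D^{1/2}}{q^{3/4}}$, apply Cauchy-Schwarz between the two periods. By Parseval and Proposition \ref{prop:L4}, the first factor satisfies $\sum_g |\langle F, g\rangle_q|^2 \leq \|\widetilde f\|_4^4 \ll q^{-1/2+\varepsilon}$. For the second factor, Lemma \ref{l1} gives
\[
\sum_g |W_{D,g}|^2 \asymp \frac{D^{1/2}}{q} \sum_g \frac{L(g, \tfrac12)\, L(g \times \chi_D, \tfrac12)}{L(\mathrm{sym}^2 g, 1)},
\]
and the target is to bound this by $\ll (D^{1/2} + D/q)(qD)^\varepsilon$. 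This is an analogue of Theorem \ref{thm:firstmoment} carrying an extra factor of $L(g, \tfrac12)$; I would establish it by extending the proof of Theorem \ref{thm:firstmoment}, expanding both $L(g,\tfrac12)$ and $L(g\times\chi_D,\tfrac12)$ via approximate functional equations and applying the Kuznetsov formula to the resulting double sum, the extra $L(g, \tfrac12)$ being absorbed into the diagonal contribution at cost $(qD)^\varepsilon$.

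For the $D^{7/16}$ bound, which the excerpt flags as relevant when $q$ is small compared to $D$, use the Blomer-Harcos hybrid subconvexity bound \cite{BlHa} for $L(g \times \chi_D, \tfrac12)$ pointwise in $g$, combined with the convexity bound $L(g, \tfrac12) \ll q^{1/4+\varepsilon}$, to get a pointwise estimate on $|W_{D,g}|$. Combining with $\sum_g |\langle F, g\rangle_q| \ll q^{1/4+\varepsilon}$ (from Cauchy-Schwarz, Weyl's law, and Proposition \ref{prop:L4}) then delivers the claimed $D^{7/16+\varepsilon}$.

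The main obstacle I anticipate is the correct handling of the extra $L(g, \tfrac12)$ factor appearing in $\sum_g |W_{D,g}|^2$: bounding it pointwise by convexity, or splitting via Cauchy-Schwarz into two separate second moments, each loses a factor of $q^{1/8}$ relative to the claimed bound. The fix is to avoid pointwise or separate estimates and instead carry out a single direct Kuznetsov analysis of the combined sum, in which $L(g, \tfrac12)$ is absorbed into the diagonal at only $(qD)^\varepsilon$ cost, so that the strength of Theorem \ref{thm:firstmoment} is preserved under the additional factor.
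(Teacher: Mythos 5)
Your decomposition of the problem is different from the paper's, and unfortunately both halves of it have real gaps. The paper does not apply Cauchy--Schwarz between the two periods $\langle F,g\rangle_q$ and $W_{D,g}$; instead it combines \eqref{eq:Triple} and \eqref{e4.1} into a single sum of $L$-value products and applies H\"older with exponents $(2,4,4)$, producing the three moments $M_1, M_2, M_3$ of \eqref{eq:3Ms}. The factor $L(g,\tfrac12)$, which occurs with total exponent $1$ after combining the two period formulas, is thereby routed into the fourth moment $M_2 \ll q(qD)^{\varepsilon}$ (spectral large sieve, Proposition \ref{ls}), i.e.\ at Lindel\"of-on-average cost, while $L(\mathrm{sym}^2 f \times g,\tfrac12)$ goes into the second moment $M_3 \ll q^2(qD)^{\varepsilon}$ (Proposition \ref{moment}) and $L(g\times\chi_D,\tfrac12)$ into the clean first moment $M_1 \ll \min(q^{3/2}D^{3/8},\, q+D^{1/2})(qD)^{\varepsilon}$. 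This is precisely what your argument fails to reproduce. For the $D^{7/16}$ bound, estimating $L(g,\tfrac12)\ll q^{1/4+\varepsilon}$ pointwise by convexity inside $|W_{D,g}|$ costs $q^{1/8}$ after the square root in Lemma \ref{l1}: you get $|W_{D,g}| \ll q^{-1/8}D^{7/16+\varepsilon}$, and with $\sum_g|\langle F,g\rangle_q|\ll q^{1/4+\varepsilon}$ the total is $q^{1/8}D^{7/16+\varepsilon}$, not $D^{7/16+\varepsilon}$. There is no way to recover this loss within your scheme, because once $L(g,\tfrac12)$ is bounded pointwise it cannot benefit from averaging.

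The more serious gap is in the second bound. Your Cauchy--Schwarz requires
\begin{equation*}
\sum_{\substack{g\in\mathcal{B}_q\\ |t_g|\le (qD)^{\varepsilon}}}\frac{L(g,\tfrac12)\,L(g\times\chi_D,\tfrac12)}{L(\mathrm{sym}^2 g,1)} \ll (q+\sqrt{D})(qD)^{\varepsilon},
\end{equation*}
which is \emph{not} Theorem \ref{thm:firstmoment} and is not a routine extension of its proof. This is a first moment of the degree-four central values $L(g\times\Theta_{\chi_0},\tfrac12)$ of conductor $(qD)^2$ over a family of size $q$. After opening both approximate functional equations and applying Kuznetsov, the off-diagonal is $\sum_{c\equiv 0\,(q)}c^{-1}\sum_{m,n}\chi_D(n)(mn)^{-1/2}S(m,n;c)H(4\pi\sqrt{mn}/c)$ with \emph{two} free variables, only one of which carries $\chi_D$; the complete character-sum evaluation of $a(m;c,D)$ that drives Proposition \ref{prop:Kloostermanbound} no longer applies, and one is facing a shifted-convolution-type problem. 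Asserting that ``the extra $L(g,\tfrac12)$ is absorbed into the diagonal at cost $(qD)^{\varepsilon}$'' assumes the conclusion; the diagonal only controls the $m=n$ terms. The entire point of the paper's $(2,4,4)$ H\"older arrangement is to avoid ever needing this mixed moment, by separating $L(g,\tfrac12)$ into $M_2$ where the large sieve handles it unconditionally. As written, your proposal establishes neither bound in \eqref{eq:Bqestimate}.
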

\begin{proof}[Proof of Lemma \ref{lemma:Bqestimate}]
Combining \eqref{eq:Triple} and \eqref{e4.1} we obtain
\begin{multline}
\label{eq:FgW}
 \sum_{\substack{g \in \mathcal{B}_q \\ |t_g| \leq (qD)^{\varepsilon}}} \langle F, g \rangle_q W_{D,g} = \frac{D^{1/4}}{q^{3/2} L(\mathrm{sym}^2 f, 1)} 
 \\
 \times
\sum_{\substack{g \in \mathcal{B}_q \\ |t_g| \leq (qD)^{\varepsilon}}} \frac{\theta'_{g,f,D}}{L(\mathrm{sym}^2 g, 1)} L(g \times \chi_D, \tfrac12)^{\frac12} L(g, \tfrac12) L(\mathrm{sym}^2 f \times g, \tfrac12)^{\frac12}.
\end{multline}
We apply H\"{o}lder's inequality with exponents $2, 4, 4$, respectively, obtaining
\begin{equation}
 \sum_{\substack{g \in \mathcal{B}_q \\ |t_g| \leq (qD)^{\varepsilon}}} \langle F, g \rangle_q W_{D,g} \ll \frac{D^{1/4}}{q^{3/2} L(\mathrm{sym}^2 f, 1)} 
M_1^{1/2} M_2^{1/4} M_3^{1/4},
\end{equation}
where
\begin{equation}
\label{eq:3Ms}
\begin{gathered}
 M_1 = \sum_{\substack{g \in \mathcal{B}_q \\ |t_g| \leq (qD)^{\varepsilon}} } \frac{L(g \times \chi_D, \tfrac12)}{L(\mathrm{sym}^2 g, 1)},
\quad
M_2 = \sum_{\substack{g \in \mathcal{B}_q \\ |t_g| \leq (qD)^{\varepsilon}} } \frac{L(g, \tfrac12)^4}{L(\mathrm{sym}^2 g, 1)},
\\
% \end{equation}
% and
% \begin{equation}
 M_3 = \sum_{\substack{g \in \mathcal{B}_q \\ |t_g| \leq (qD)^{\varepsilon}} } \frac{L(\mathrm{sym}^2 f \times g, \tfrac12)^{2}}{L(\mathrm{sym}^2 g, 1)}.
\end{gathered}
\end{equation}
For $M_2$, it is a standard application of the spectral large sieve inequality that
\begin{equation}
 M_2 \ll q (qD)^{\varepsilon}.
\end{equation}
With similar technology combined with some deep inputs on the automorphy of Rankin-Selberg convolutions, we will show
\begin{proposition}
\label{moment}
We have
\begin{equation}
 M_3 \ll q^{2} (qD)^{\varepsilon}.
\end{equation}
\end{proposition}
For $M_1$, we have two different approaches.  For $q$ small compared to $D$ we simply apply the best-known progress towards Lindel\"{o}f for $L(g \times \chi_D, 1/2)$ and multiply by the number of forms (i.e, $q (qD)^{\varepsilon}$).  Currently the best result is \cite{BlHa} which gives $L(g \times \chi_D, 1/2) \ll q^{1/2} D^{3/8} (qD)^{\varepsilon}$.
For $q$ larger we appeal to Theorem \ref{eq:firstmomentq}.  Taken together, we obtain
\begin{equation}
 M_1 \ll (qD)^{\varepsilon} \min(q^{3/2} D^{3/8}, q + D^{1/2}).
\end{equation}
Taking these estimates for granted, we obtain \eqref{eq:Bqestimate} after a short calculation. 
\end{proof}

Now we discuss an alternate arrangements of H\"{o}lder's inequality which may be of interest.  Applying H\"{o}lder's inequality in \eqref{eq:FgW} with exponents $4, 4, 2$, respectively, we obtain
\begin{equation}
 \sum_{\substack{g \in \mathcal{B}_q \\ |t_g| \leq (qD)^{\varepsilon}}} \langle F, g \rangle_q W_{D,g} \ll \frac{D^{1/4}}{q^{3/2} L(\mathrm{sym}^2 f, 1)} M_1'^{1/4} M_2^{1/4} M_3'^{1/2},
\end{equation}
where
\begin{equation*}
 M_1' = \sum_{\substack{g \in \mathcal{B}_q \\ |t_g| \leq (qD)^{\varepsilon}} } \frac{L(g \times \chi_D, \tfrac12)^2}{L(\mathrm{sym}^2 g, 1)}, \qquad M_3' = \sum_{\substack{g \in \mathcal{B}_q \\ |t_g| \leq (qD)^{\varepsilon}} } \frac{L(\mathrm{sym}^2 f \times g, \tfrac12)}{L(\mathrm{sym}^2 g, 1)}.
\end{equation*}
The large sieve inequality easily shows $M_1' \ll (q + q^{1/2} D) (qD)^{\varepsilon}$ and it seems likely that improvements are possible here using current technology.  One may hope to show $M_3' \ll q (qD)^{\varepsilon}$ as this is a family with roughly $q$ elements with conductors of size approximately $q^4$; the weight aspect analog of this estimate was shown in \cite{BKY}.  Conditional on this bound on $M_3'$, one would obtain
\begin{equation*}
 M_f(D) \ll (q^{1/2} D^{1/4} + q^{3/8} D^{1/2}) (qD)^{\varepsilon},
\end{equation*}
which would imply a subconvexity bound for any range of $q$ and $D$ except when one of $q$ or $D$ is fixed.

\section{Sparse equidistribution}
\label{section:equi}
Here we develop a natural formulation of equidistribution of Heegner points of level $q$ and discriminant $D$ as $q$ is allowed to vary with $D$, say restricted by $q \leq D^{\eta}$ for some fixed $\eta > 0$.  The basic difficulty is that the spaces $Y_0(q)$ are varying.  Here we briefly recall the usual definition of equidistribution of say Heegner points of level $1$.
Suppose that $U(z): \Gamma_0(1) \backslash \mathbb{H} \rightarrow \mathbb{R}$ is a smooth, compactly supported function on $Y_0(1)$.  Then equidistribution means
\begin{equation*}
 \lim_{D \rightarrow \infty} \frac{1}{h(-D)} \sum_{\sigma \in \mathrm{CL}_K}  U(\tau^{\sigma}) = \int_{Y_0(1)} U(z) \frac{3}{\pi} \frac{dx dy}{y^2}. 
\end{equation*}
What we will do is construct a sequence (actually, many possible different sequences) of functions $U_q$ invariant on $\Gamma_0(q)$ with ``isometric'' analytic properties, and measure equidistribution with such functions.  To construct these functions, begin with a function $u: \mathbb{H} \rightarrow \mathbb{R}$ that is smooth with support on a compact set $S$ such that no two points of $S$ are $\Gamma_0(1)$-equivalent (this condition is not strictly necessary).  Then $U(z) = \sum_{\gamma \in \Gamma_0(1)} u(\gamma z)$ (a sum with at most one nonzero term) satisfies the above properties.  Now for each $q$, choose $\omega_q \in \Gamma_0(1) / \Gamma_0(q) $ and define $U_q(z) = \sum_{\gamma \in \Gamma_0(q)} u(\omega_q \gamma  z)$.  In other words, we have $U_q(z) = U(z)$, if $z \in \Gamma_0(q) \omega_q^{-1} \Gamma_0(1) \backslash \mathbb{H}$, and $U_q(z) = 0$ otherwise.  One can picture what is going on by taking $S \subset \mathcal{F}$ where $\mathcal{F}$ is the usual fundamental domain for $\Gamma_0(1) \backslash \mathbb{H}$.  Then for each $q$ choose a fundamental domain $\mathcal{F}_q$ for $\Gamma_0(q) \backslash \mathbb{H}$.  Under the natural projection $\pi: \mathcal{F}_q \rightarrow \mathcal{F}$, the set $S$ pulls back to $q+1$ translates of $S$; the construction of $U_q$ is to choose one of these copies to be the support of $U_q$ (restricted to $\mathcal{F}_q$), and $U_q$ restricted to this copy is identical to $U$.  This construction appears (to the authors) to be a natural way to maintain consistent choices of test function for varying $q$'s.  For example, we have obvious statements like
\begin{equation*}
 \Big( \int_{Y_0(1)} |U(z)|^\rho \frac{dx dy}{y^2} \Big)^{1/\rho} = \Big( \int_{Y_0(q)} |U_q(z)|^\rho \frac{dx dy}{y^2} \Big)^{1/\rho},
\end{equation*}
for any $\rho > 0$ (including $\rho=\infty$).  Furthermore, as $q \rightarrow \infty$, the measure of the support of $U_q$ is shrinking compared to the total measure of $\mathcal{F}_q$ so that we are capturing some notion of sparsity.  Now we {\em define} joint equidistribution to mean
\begin{equation}
\label{eq:equidef}
 \frac{1}{h(-D)} \sum_{\sigma \in \mathrm{CL}_K} U_q(\tau^{\sigma}) = \frac{1}{\vol(Y_0(q))} \int_{Y_0(1)} U(z) \frac{dx dy}{y^2}  + E(q,D),
\end{equation}
where as $D \rightarrow \infty$ with $q=q(D) \leq D^{\eta}$, we require
$ E(q,D) = o(q^{-1})$.  In practice we can only expect \eqref{eq:equidef} to hold true for $q$ small enough compared to $D$.  In particular, by volume considerations we cannot even expect the Heegner points to be dense in $Y_0(q)$ unless $q = o(h(-D))$.

\begin{theorem}
\label{thm:equi2}
 Let $U$ and $U_q$ be defined as above.  Then
\begin{equation}
\label{eq:sparseequi}
\frac{1}{h(-D)} \sum_{\sigma \in \mathrm{CL}_K} U_q(\tau^{\sigma}) = \frac{1}{\vol(Y_0(q))} \int_{Y_0(1)} U(z) \frac{dx dy}{y^2} + O(q^{1/4} D^{-1/16 + \varepsilon}),
\end{equation}
and therefore the Heegner points of level $q$ become equidistributed in $Y_0(q)$ provided $q \leq D^{1/20 - \varepsilon}$.  The implied constant depends on $U$ and $\varepsilon$ but not on the choice of $\omega_q$'s.
\end{theorem}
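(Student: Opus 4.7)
The plan is to mirror the spectral decomposition argument used for Theorem \ref{momentthm}, but with $U_q$ in place of $F = |\tilde{f}|^2$. I would expand $U_q$ in $L^2(Y_0(q))$ as a constant function plus the cuspidal spectrum $\mathcal{B}$ plus the Eisenstein series, evaluate at each Heegner point $\tau^\sigma$, and sum over $\sigma \in \mathrm{CL}_K$ to obtain an analogue of \eqref{af2}. Dividing by $h(-D)$ and noting that $\langle U_q, 1 \rangle_q = \int_{Y_0(q)} U_q \, \tfrac{dx\,dy}{y^2} = \int_{Y_0(1)} U \, \tfrac{dx\,dy}{y^2}$ by the construction of $U_q$, the constant eigenfunction yields exactly the main term $\frac{1}{\vol(Y_0(q))} \int_{Y_0(1)} U \, \tfrac{dx\,dy}{y^2}$ on the right-hand side of \eqref{eq:sparseequi}. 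It therefore remains to bound the cuspidal and continuous contributions by $O(h(-D)\, q^{1/4} D^{-1/16+\varepsilon}) = O(q^{1/4} D^{7/16+\varepsilon})$.

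Because $U_q$ is smooth and compactly supported, repeated integration by parts against the hyperbolic Laplacian gives $|\langle U_q, g\rangle_q| \ll_{U,A} (1+|t_g|)^{-2A}$, and similarly for the Eisenstein inner products, so we may truncate both spectral sums at $|t_g|, |t| \leq (qD)^\varepsilon$ with negligible error. A key difference from Section \ref{section:cuspforms} is the absence of a Watson-type identity for $\langle U_q, g\rangle_q$; instead we use Parseval to obtain the averaged bound $\sum_g |\langle U_q, g\rangle_q|^2 \leq \|U_q\|_{L^2(Y_0(q))}^2 = \|U\|_{L^2(Y_0(1))}^2 = O(1)$.

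For the cusp form contribution I would apply Cauchy-Schwarz, reducing matters to the second moment $\sum_{|t_g| \leq (qD)^\varepsilon} |W_{D,g}|^2$. By Lemma \ref{l1} and the nonnegativity of the central values, this equals (up to constants) $\tfrac{\sqrt{D}}{\nu(q)}$ times a short sum of $L(g \times \chi_D, \tfrac12) L(g, \tfrac12)/L(\mathrm{sym}^2 g, 1)$, which I would bound by combining the first-moment estimate of Theorem \ref{thm:firstmoment} for $L(g \times \chi_D, \tfrac12)$, H\"older's inequality, and the hybrid subconvexity bound \cite{BlHa} (together with a convexity bound or a spectral-large-sieve input for the $L(g, \tfrac12)$ factor). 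The oldform contribution is controlled as in Lemma \ref{lemma:oldforms}, and the Eisenstein contribution as in Proposition \ref{prop:eisenstein}, using the formula \eqref{eq:WeylEisenstein} for $\sum_\mathfrak{a} W_{D,\mathfrak{a}}(t)$ together with the Conrey-Iwaniec bound for $L(\tfrac12 + it, \chi_D)$. Since none of these bounds depend on the choice of coset representative $\omega_q$ (only on $\|U\|_2$ and $\int U$), the estimate is uniform in $\omega_q$.

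The main obstacle is the absence of a Watson-type identity for $\langle U_q, g \rangle_q$, which limits our input on the test-function side to the crude Parseval bound, rather than the fourth-moment and Rankin-Selberg second-moment inputs ($M_2$, $M_3$) that powered the sharper estimate of Lemma \ref{lemma:Bqestimate}. Assembling the estimates above and dividing by $h(-D) \gg D^{1/2-\varepsilon}$ (via Siegel), one obtains the error term $O(q^{1/4} D^{-1/16+\varepsilon})$ in \eqref{eq:sparseequi}. Comparing with the main term $\frac{1}{\vol(Y_0(q))} \int U \asymp 1/q$, this error is $o(1/q)$ precisely when $q^{5/4} = o(D^{1/16})$, i.e., for $q \leq D^{1/20-\varepsilon}$, yielding the claimed sparse equidistribution.
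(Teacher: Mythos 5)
Your proposal is correct and follows essentially the same route as the paper: spectral decomposition of $U_q$, identification of the main term via $\langle U_q, 1\rangle_q = \langle U, 1\rangle_1$, truncation of the spectrum, Cauchy--Schwarz with Bessel's inequality ($\langle U_q, U_q\rangle_q = O_U(1)$) to reduce to the second moment of the Weyl sums, and then Lemma \ref{l1} combined with the Blomer--Harcos bound and the first moment of $L(g,\tfrac12)$ to get $\sum_{|t_g|\leq D^{\varepsilon}}|W_{D,g}|^2 \ll q^{1/2}D^{7/8+\varepsilon}$, with the continuous spectrum handled by \eqref{eq:WeylEisenstein} and Conrey--Iwaniec. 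The only cosmetic difference is that the paper does not single out the oldforms or invoke Theorem \ref{thm:firstmoment} here, but your listed ingredients contain the paper's exact combination and yield the same exponents.
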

\begin{proof}
 The proof follows similar lines to Theorem \ref{momentthm}.  We begin with the spectral decomposition
\begin{equation}
\label{eq:Uqspectral}
 U_q(z) = \frac{\langle U_q, 1 \rangle_q}{\vol(Y_0(q))} + \sum_{g \in \mathcal{B}} \langle U_q, g \rangle_q g(z) + \frac{1}{4 \pi } \sum_{\mathfrak{a}} \int_{-\infty}^{\infty} \langle U_q, E_{\mathfrak{a}}(\cdot, \tfrac12 + it) \rangle_q E_{\mathfrak{a}}(z, \tfrac12 + it) dt.
\end{equation}
Note $\langle U_q, 1 \rangle_q = \langle U, 1\rangle_1$.  Then inserting \eqref{eq:Uqspectral} into \eqref{eq:sparseequi}, we obtain
\begin{equation}
  \sum_{\sigma \in \mathrm{CL}_K} U_q(\tau^{\sigma})  =  \frac{h(-D) \langle U, 1 \rangle_1 }{\vol(Y_0(q))} +  \sum_{g \in \mathcal{B}} \langle U_q, g \rangle_q W_{D,g} + \sum_{\mathfrak{a}} \int_{-\infty}^{\infty} \langle U_q, E_{\mathfrak{a}}(\cdot, \tfrac12 + it) W_{D, \mathfrak{a}}(t) \frac{dt}{4 \pi}.
\end{equation}
The analog of \eqref{Lineq} shows that we can truncate the spectral sum (and integral) at $|t_g|, |t| \leq D^{\varepsilon}$, with a very small error term.  Then we apply Cauchy-Schwarz and Bessel's inequality to obtain
\begin{multline}
 \Big| \sum_{\sigma \in \mathrm{CL}_K} U_q(\tau^{\sigma})  -  \frac{h(-D) \langle U, 1 \rangle_1 }{\vol(Y_0(q))} \Big|^2 \leq \langle U_q, U_q \rangle_q  \Big(\sum_{\substack{g \in \mathcal{B} \\ |t_g| \leq D^{\varepsilon}}} |W_{D,g}|^2
+ \sum_{\mathfrak{a}} \int_{-D^{\varepsilon}}^{D^{\varepsilon}} |W_{D,\mathfrak{a}}(t)|^2 dt\Big) 
\\
+ O(D^{-100}).
\end{multline}
Then by Lemma \ref{l1}, \eqref{eq:WeylEisenstein}, the \cite{BlHa} bound of $L(g \times \chi_D, \tfrac12) \ll q^{1/2} D^{3/8} (qD)^{\varepsilon}$, and the standard first moment bound for $L(g, \tfrac12)$, (and easier analogues for the continuous spectrum), we have
\begin{equation}
 \Big| \sum_{\sigma \in \mathrm{CL}_K} U_q(\tau^{\sigma})  -  \frac{h(-D) \langle U, 1 \rangle_1 }{\vol(Y_0(q))} \Big|^2 \ll_U q^{1/2} D^{7/8 + \varepsilon}.
\end{equation}
Then with some simplifications we complete the proof.
\end{proof}

\section{Proof of Proposition \ref{moment}}
In this section we prove Proposition \ref{moment}.  The basic idea is to apply the spectral large sieve inequality.  We begin by collecting some standard facts, starting with the spectral large sieve inequality.
\begin{proposition}\label{ls} Let $\{u_j\}$ be an orthonormal basis of Maass cusp forms for $\Gamma_0(q)$. Let 
$\lambda_j(n)$ be the $n$-th Hecke eigenvalue of $u_j$. Let $T \geq 1$ and $N \geq 1$. Then for any 
complex numbers $\{a_n\}_{n=1}^{N}$, we have 
\begin{align}\label{lsi}
\sum_{t_j \leq T} \frac{1}{L(\mathrm{sym}^2 u_j, 1)} \big|\sum_{n=1}^{N}a_n\lambda_j(n)\big|^2 \ll \left(qT^2 +N\log(N)\right)(qT)^{\varepsilon} \sum_{n \leq N} |a_n|^2.
\end{align} 
% where 
% \begin{align*}
% \norm{a}^2=\sum_{n=1}^{N}\abs{a_n}^2.
% \end{align*}
\end{proposition}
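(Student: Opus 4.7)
The plan is to derive Proposition \ref{ls} from the Kuznetsov trace formula for $\Gamma_0(q)$ combined with Weil's bound on Kloosterman sums, which is the standard route to such spectral large sieve inequalities. Alternatively, one may simply cite the classical Deshouillers--Iwaniec large sieve after converting normalizations, since the inequality in this form is essentially folklore; the sketch below indicates how it goes in either case.

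First I would open the square on the left-hand side of \eqref{lsi} to reduce the problem to bounding
\begin{equation*}
\Delta(m,n;T) := \sum_{t_j \leq T} \frac{\lambda_j(m) \overline{\lambda_j(n)}}{L(\mathrm{sym}^2 u_j, 1)}, \qquad m,n \leq N,
\end{equation*}
after which the left side becomes $\sum_{m,n} a_m \overline{a_n} \Delta(m,n;T)$. Next I would majorize the sharp cutoff $t_j \leq T$ by a smooth non-negative test function $h(t)$ that is $\geq 1$ on $[-T,T]$, is $\ll (1+|t|/T)^{-A}$ far away, and has a holomorphic extension suitable for Kuznetsov. Because the Eisenstein contribution in Kuznetsov is non-negative, inserting $h$ only enlarges the quantity we are bounding.

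Kuznetsov's formula on $\Gamma_0(q)$ then expresses the weighted spectral sum as a diagonal term plus a sum of Kloosterman sums,
\begin{equation*}
\sum_j \frac{h(t_j)\,\lambda_j(m)\overline{\lambda_j(n)}}{L(\mathrm{sym}^2 u_j,1)} = \delta_{m=n}\, D_h + \sum_{\substack{c \geq 1 \\ q \mid c}} \frac{S(m,n;c)}{c}\, \check h\!\left(\frac{4\pi\sqrt{mn}}{c}\right),
\end{equation*}
where the identification of the weight $1/L(\mathrm{sym}^2 u_j,1)$ with the Kuznetsov weight uses the well-known relation $|\rho_j(1)|^2 \asymp \cosh(\pi t_j)/\bigl(q L(\mathrm{sym}^2 u_j,1)\bigr)$ for $L^2$-normalized newforms of level $q$, together with $\rho_j(n) = \rho_j(1)\lambda_j(n)$ for $(n,q)=1$. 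The diagonal satisfies $D_h \ll T^2 (qT)^\varepsilon$, and after multiplying through by $a_m \overline{a_n}$ and summing $m=n$, it produces the bound $q T^2 (qT)^\varepsilon \|a\|_2^2$ (the factor $q$ comes from the identification of weights above).

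For the off-diagonal, I would apply Weil's bound $|S(m,n;c)| \ll (m,n,c)^{1/2} c^{1/2} \tau(c)$ together with the standard estimates for the Bessel transform $\check h$ (uniformly bounded with polynomial decay once $4\pi\sqrt{mn}/c$ exceeds $T$), and then use Cauchy--Schwarz in $m,n$ to pull out $\|a\|_2^2$. Summing the remaining geometric series in $c \equiv 0 \pmod q$ yields the $N\log N\, (qT)^\varepsilon$ contribution. Combining the diagonal and off-diagonal gives exactly the bound claimed in \eqref{lsi}. The main technical obstacle is the precise bookkeeping between the Kuznetsov normalization and the weight $1/L(\mathrm{sym}^2 u_j,1)$ in the $(n,q) > 1$ regime; for $q$ prime this only affects the thin subsum with $q \mid n$, which is negligible after using the Hecke relations at $q$, so no serious difficulty arises.
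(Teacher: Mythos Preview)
The paper does not actually prove Proposition~\ref{ls}; it merely states it as one of the ``standard facts'' collected at the start of the proof of Proposition~\ref{moment}, and uses it as a black box. So there is no proof in the paper to compare your proposal against. Your second suggestion---simply citing the classical Deshouillers--Iwaniec spectral large sieve after converting normalizations via $|\rho_j(1)|^2/\cosh(\pi t_j) \asymp 1/(qL(\mathrm{sym}^2 u_j,1))$---is precisely what the paper does implicitly, and is the appropriate way to handle this here.

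Your Kuznetsov sketch is the standard route and is broadly correct in spirit, though the off-diagonal treatment you describe (Weil bound plus pointwise decay of the Bessel transform, then Cauchy--Schwarz in $m,n$) is rougher than what the classical argument actually does; in practice one exploits cancellation in the $m,n$-sums via Poisson or uses duality before applying Kuznetsov, rather than bounding $\Delta(m,n;T)$ individually. But since this is folklore and the paper treats it as such, there is no gap in your proposal to flag.
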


By Gelbart and Jacquet \cite{GJ} the symmetric square lift $\textrm{sym}^2f$ is a self-dual automorphic 
form on $\textrm{GL}_3$ with Fourier coefficients $A(m,k)$ satisfying 
\begin{align*}
A(m,1)=\sum_{ab^2=m}\lambda_f(a^2)  \quad \textrm{when} \quad q \nmid m,
\end{align*}
and 
\begin{align*}
A(m,k)=\sum_{d|(m,k)}\mu(d)A(\frac{m}{d},1)A(1,\frac{k}{d}) \quad \textrm{when} \quad q \nmid mk.
\end{align*}
Xiannan Li \cite{L} showed the following uniform bound
\begin{align}\label{rs}
\sum_{mk^2\leq N}\frac{|A(m,k)|^2}{mk} \ll_{t_f} (qN)^{\varepsilon} .
\end{align}
We have
\begin{align}\label{euler}
L(\mathrm{sym}^2 f \times g,s)&=(1-\lambda_g(q)q^{-s})^{-1}(1-\lambda_g(q)q^{-(s+1)})^{-1}L^{(q)}(\mathrm{sym}^2 f \times g,s)\\
& =: \sum_{n=1}^{\infty}\frac{\lambda_{\textrm{sym}^2 f \times g}(n)}{n^s} \notag, %, \qquad \textrm{Re}(s) > 1 \notag
\end{align}
where
\begin{align*}
L^{(q)}(\mathrm{sym}^2 f \times g,s)=\sum_{(mk,q)=1} \frac{A(m,k)\lambda_{g}(m)}{(mk^2)^s}.
\end{align*}

The conductor of $L(\mathrm{sym}^2 f \times g, 1/2)$ is $q^4$.  Using Lemma 3.4 of \cite{LiYoung} (which is a useful variant on the approximate functional equation), we have
\begin{equation}
\label{eq:M3AFE}
 M_3 \ll q^{\varepsilon} \int_{-\log{q}}^{\log{q}} \sum_{|t_g| \leq (qD)^{\varepsilon}} \frac{1}{L(\mathrm{sym}^2 g, 1)}  \Big| \sum_{n \ll q^{2 + \varepsilon}} \frac{\lambda_{\mathrm{sym}^2 f \times g}(n)}{n^{1/2+it}} W(n) \Big|^2 + O((qD)^{-100}),
\end{equation}
where $W(n)$ is some bounded function depending on $q$ but not on $t_g$.  In fact, \eqref{euler} shows that it suffices to bound the sum over $n$ coprime to $q$; one way to see this is to follow the proof of Lemma 3.4 of \cite{LiYoung} and in their (3.34) factor the $L$-function as in \eqref{euler} and trivially bound the contribution from the prime $q$.

By unraveling the definition of Dirichlet series coefficients, and using Cauchy's inequality, we obtain
\begin{multline}
\label{eq:largesievereduction}
 %\sum_{\substack{g \in \Cal{B}_q\\ t_g \leq q^{\varepsilon}}} 
\Big| \sum_{\substack{(n,q) = 1 \\ n \ll q^{2+\varepsilon}}}
\frac{\lambda_{\mathrm{sym}^2 f \times g}(n)}{n^{1/2+it}} W(n) \Big|^2 = 
%\sum_{\substack{g \in \Cal{B}_q\\ t_g \leq q^{\varepsilon}}} 
\Big|  \sum_{\substack{(mk,q) = 1 \\ mk^2 \ll q^{2+\varepsilon}}} \frac{A(m,k) \lambda_g(m)}{m^{1/2+it} k^{1+2it}} W(mk^2) \Big|^2
\\
\ll q^{\varepsilon} \sum_{k \ll q^{1+\varepsilon}} k^{-1} 
%\sum_{\substack{g \in \Cal{B}_q\\ t_g \leq q^{\varepsilon}}} 
\Big|  \sum_{\substack{(m,q) = 1 \\ m \ll  q^{2+\varepsilon}/k^2}} \frac{A(m,k) \lambda_g(m)}{m^{1/2+it}} W(mk^2) \Big|^2.
\end{multline}
Inserting \eqref{eq:largesievereduction} into \eqref{eq:M3AFE} (after freely imposing the condition $(n,q) = 1$), and using Proposition \ref{ls}, we obtain
\begin{equation}
 M_3 \ll q^{\varepsilon} \sum_{k \ll q^{1+\varepsilon}} k^{-1} (q + \frac{q^2}{k^2}) \sum_{m \leq q^{2+\varepsilon}/k^2} \frac{|A(m,k)|^2}{m}.
\end{equation}
Then using \eqref{rs} completes the proof of Proposition \ref{moment}.

\section{Proof of Proposition \ref{prop:L4}}
Next we give the proof of Proposition \ref{prop:L4} which with our current notation gives an upper bound on $\langle F, F \rangle_q$.  By Parseval, we have
\begin{equation*}
 \langle F, F \rangle_q = \sum_{g \in \mathcal{B}} |\langle F, g \rangle_q|^2 + \dots,
\end{equation*}
where the dots indicate the continuous spectrum as well as the constant eigenfunction (which is easily seen to give $O(q^{-1+\varepsilon})$).  As in the proof of Lemma \ref{lemma:oldforms}, the sup-norm of an oldform is $O(q^{-1/2})$ so that these terms also give $O(q^{-1+\varepsilon})$.  By \eqref{eq:Triple} and Cauchy's inequality, we have
\begin{equation}
 \sum_{g \in \mathcal{B}_q} |\langle F, g \rangle_q|^2 \ll q^{-2} M_3^{1/2} M_4^{1/2} , \qquad M_4 = \sum_{\substack{g \in \mathcal{B}_q \\ |t_g| \leq q^{\varepsilon}}}  \frac{L(g, 1/2)^2}{L(\mathrm{sym}^2 g, 1)},
\end{equation}
where $M_3$ is as in \eqref{eq:3Ms}.  Then by Proposition \ref{moment} and the easier bound $M_4 \ll q^{1+\varepsilon}$, we obtain the bound of $O(q^{-1/2 + \varepsilon})$ for the newforms.  By \eqref{e7.1}, and the convexity bound $L(\mathrm{sym}^2 f, 1/2 + it) \ll q^{1/2 + \varepsilon}$ (for $t \ll q^{\varepsilon})$, we see that the Eisenstein series contribute $O(q^{-1+\varepsilon})$ just like the oldforms.  This completes the proof.

\section{Proof of Theorem \ref{thm:firstmoment}}
%We do not bother to consider the continuous spectrum in the level $q$ case since the Fourier expansion of the level $q$ Eisenstein series is similar enough to that of the level $1$ case that one gains no extra information.

We shall treat both bounds in Theorem \ref{thm:firstmoment} simultaneously as much as possible.  For \eqref{eq:firstmomentq} we cover the set of $|t_g| \leq M$ by the set $|t_g| \leq 1$ and $O(M^{\varepsilon})$ subintervals of the form $T \leq t_g \leq T + M'$ with $1 \leq T \leq M$, and $M' = T^{1-\varepsilon}$.

In the $SL_2(\mathbb{Z})$ case we may assume $T \geq |D|^{\varepsilon}$ since otherwise the convexity bound applied to every term immediately gives \eqref{eq:firstmoment1}.
Let $h(t)$ be a smooth, even, nonnegative function satisfying
\begin{equation}
 \label{eq:h}
\begin{cases}
 h(t) \text{ extends to a holomorphic function on } |\text{Im}(t)| \leq A \\
 h(t) \ll (1+|t|)^{-100}, \\
 h(\pm \frac{(2n+1) i}{2})=0 \text{ for } 0 \leq n \leq A,
\end{cases}
\end{equation}
where $A$ is some large positive parameter depending on $\varepsilon > 0$ desired in Theorem \ref{thm:firstmoment}.  We shall eventually take
\begin{equation}
\label{eq:hchoice}
 h(t) = P(t)[\exp(-(\frac{t-T}{M})^2) + \exp(-(\frac{t+T}{M})^2)],
\end{equation}
where $P(t)$ is an even polynomial vanishing at $i/2$, $3i/2$, \dots.  We also use \eqref{eq:hchoice} for $T=0$, $M=1$ to handle $|t| \leq 1$.  For instance, we may take 
\begin{equation}
 P(t) = c \frac{(t^2 + \frac14 )}{T^2 + \frac14} \frac{(t^2 + \frac34)}{T^2+ \frac34} \dots,
\end{equation}
where $c$ is a constant independent of $t$. 
We furthermore suppose $h(t) \geq 1$ for $t$ in the region of interest, i.e., $T \leq t \leq T+M$ or $-\frac14 \leq it \leq \frac14$.  Then in \eqref{eq:firstmoment1} or \eqref{eq:firstmomentq} we can attach the smooth weight $h$ and extend the sum (or integral) to all $t_g$ (or $t$), for purposes of obtaining an upper bound.  Then define for $SL_2(\mz)$,
\begin{equation}
 \mathcal{M}_1 = \sum_{g \in \mathcal{B}_1^{(1)}
} h(t_g) \frac{L(g \times \chi_D, \tfrac12)}{L(\mathrm{sym}^2 g, 1)} + \frac{1}{\pi} \intR h(t) \frac{|L( \tfrac{1}{2}+it, \chi_D)|^2}{|\zeta(1+2it)|^2} dt,
\end{equation}
and for the level $q$ case,
\begin{equation}
 \mathcal{M}_q = \sum_{g \in \mathcal{B}_q} h(t_g) \frac{L(g \times \chi_D, \tfrac12)}{L(\mathrm{sym}^2 g, 1)}.
\end{equation}

Our next step is to use a ``long'' one-piece approximate functional equation for the $L$-functions under consideration.
\begin{proposition}
 Fix an integer $d \geq 1$.  There exists a smooth function $V(x)$ such that for any $L$-function $L(f,s) = \sum_{n=1}^{\infty} \lambda_f(n) n^{-s}$ (as in Chapter 5 \cite{IK}) of degree $d$ and with analytic conductor $ \leq Q$, we have
\begin{equation}
 \label{eq:AFE}
L(f, \tfrac12) = \sum_{n=1}^{\infty} \frac{\lambda_f(n)}{\sqrt{n}} V(n/X) + O_{A,d} (X/Q)^{-A}),
\end{equation}
 where $A > 0$ is arbitrary and the implied constant depends only on $A$ and $d$.
\end{proposition}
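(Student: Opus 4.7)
The plan is to construct $V$ as an inverse Mellin transform and apply contour shifting, reducing the error to a shifted-line integral controlled via the functional equation and Stirling's formula.

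Begin by fixing any even entire function $G(s)$, depending only on $d$, with $G(0) = 1$ and $G(s) \ll_B (1 + |s|)^{-B}$ uniformly on every vertical strip; a concrete choice is $G(s) = e^{s^2}$. Define
\[
V(y) := \frac{1}{2\pi i} \int_{(2)} G(s) y^{-s} \frac{ds}{s}.
\]
This $V$ is smooth, independent of $f$, and decays rapidly at infinity (by shifting the defining contour to the right). For $\mathrm{Re}(s) = 2$ the Dirichlet series for $L(f, \tfrac12 + s)$ is absolutely convergent, so unfolding yields
\[
\sum_{n \geq 1} \frac{\lambda_f(n)}{\sqrt{n}} V(n/X) = \frac{1}{2\pi i} \int_{(2)} L(f, \tfrac12 + s) G(s) X^s \frac{ds}{s}.
\]

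Next, shift the contour to $\mathrm{Re}(s) = -A - 1$. Under the standing assumption that $L(f, \cdot)$ is entire (polar contributions can be subtracted explicitly if present), the only pole inside the strip is the simple pole at $s = 0$ coming from $1/s$, with residue $L(f, \tfrac12) G(0) = L(f, \tfrac12)$. On the shifted line, apply the functional equation
\[
L(f, \tfrac12 + s) = \varepsilon(f) \, N(f)^{-s} \frac{\gamma_\infty(\tfrac12 - s)}{\gamma_\infty(\tfrac12 + s)} L(\tilde f, \tfrac12 - s),
\]
and note that $\mathrm{Re}(\tfrac12 - s) = \tfrac32 + A$ lies well inside the region of absolute convergence, so $|L(\tilde f, \tfrac12 - s)| \leq \zeta(\tfrac32 + A)^d = O_d(1)$. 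Stirling's formula supplies the uniform bound
\[
\left| N(f)^{-s} \, \frac{\gamma_\infty(\tfrac12 - s)}{\gamma_\infty(\tfrac12 + s)} \right| \ll Q^{A+1} (1 + |s|)^{d(A+1)},
\]
since the analytic conductor satisfies $Q(f, \tfrac12 + s) \ll Q (1 + |s|)^d$.

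The polynomial growth in $|s|$ is absorbed by the super-polynomial decay of $G$, leaving a convergent integral bounded by $O_{A, d}(Q^{A+1} X^{-A-1})$. Since $A$ is arbitrary, this produces the claimed error $O_{A, d}((X/Q)^{-A})$ after relabeling. The main (and essentially only) obstacle is ensuring the Stirling estimate is uniform across the family: the archimedean factor is a product of $d$ gamma ratios $\Gamma_{\mathbb{R}}(\tfrac12 - s + \kappa_j)/\Gamma_{\mathbb{R}}(\tfrac12 + s + \kappa_j)$, each bounded by $(|\tfrac12 + s + \kappa_j| + 3)^{-2\mathrm{Re}(s)}$ up to harmless constants; combining this with $\prod_j (|\kappa_j| + 3) \ll Q$ (built into the analytic conductor) and the Gaussian decay of $G$ completes the estimate cleanly.
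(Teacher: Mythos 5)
Your proof is correct and follows essentially the same route as the paper: define $V$ by Mellin inversion of a rapidly decaying kernel divided by $s$, unfold the Dirichlet series, shift the contour past the simple pole at $s=0$ to recover $L(f,\tfrac12)$, and bound the shifted integral via the functional equation together with Stirling and the analytic-conductor bound (IK (5.114)). The only difference is cosmetic --- you take the Gaussian $e^{s^2}$ where the paper uses the quotient $\Gamma(2d(s+A+1))/\Gamma(2d(A+1))$; both decay fast enough on vertical lines to absorb the polynomial growth of the gamma-factor ratio, so the argument is the same.
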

\begin{proof}(Sketch)  Let
\begin{equation}
 V(x) = \frac{1}{2 \pi i} \int_{(1)} \frac{\Gamma(2d(s+A+1))}{\Gamma(2d(A+1))} x^{-s} \frac{ds}{s}.
\end{equation}
Then 
\begin{equation}
 \sum_{n=1}^{\infty} \frac{\lambda_f(n)}{\sqrt{n}} V(n/X) = \frac{1}{2 \pi i} \int_{(1)} X^s \frac{\Gamma(2d(s+A+1))}{\Gamma(2d(A+1))} L(f, 1/2 + s) \frac{ds}{s}.
\end{equation}
Shifting contours to $\text{Re}(s) = -A$ picks up the value $L(f, 1/2)$ from the pole at $s=0$, and using the functional equation and (5.114) of \cite{IK}, we obtain the desired estimate by a trivial bound on the new contour of integration.
\end{proof}
The conductor of $L(g \times \chi_D, 1/2)$ is $q D^2 (1+|t_g|^2)$, so we set 
\begin{equation}
Q = q D^2 (T+1)^2, \quad \text{and} \quad X = Q^{1+\varepsilon}.
\end{equation}
Then we have
\begin{equation*}
 \mathcal{M}_1 = \sum_{n=1}^{\infty} \frac{\chi_D(n)}{\sqrt{n}} V(n/X) \Big( \sum_{g \in \mathcal{B}_1^{(1)}
}  \frac{h(t_g)}{L(\mathrm{sym}^2 g, 1)} \lambda_g(n) + \frac{1}{\pi} \intR \frac{h(t)}{|\zeta(1+2it)|^2} \tau_{it}(n) dt \Big) + O(Q^{-100}),
\end{equation*}
and similarly for $\mathcal{M}_q$.  It is very convenient that our method allows us to use this one-piece approximate functional equation because then we do not need to split the family into pieces depending on the parity of $g$, $\chi_D$, etc.

Recall that for $g \in \mathcal{B}_q \cup \mathcal{B}_1^{(q)}$, 
\begin{equation*}
 \frac{1}{L(\mathrm{sym}^2 g, 1)} \asymp q \frac{|\rho_g(1)|^2}{\cosh(\pi t_g)},
\end{equation*}
where $\rho_g(n)$ is the $n$-th Fourier coefficient of $g$ when it is $L^2$-normalized on $Y_0(q)$; see (2.9) of \cite{BlomerL4}.  Thus we have for $l=1$ or $q$, with the notation $\mathcal{B}_q^{(q)}$ denoting $\mathcal{B}_q$,
\begin{equation}
\label{eq:MlPreKuz}
 \mathcal{M}_l \ll l \sum_{n=1}^{\infty} \frac{\chi_D(n)}{\sqrt{n}} V(n/X) \Big( \sum_{g \in \mathcal{B}_l^{(l)}} \frac{ h(t_g)}{\cosh(\pi t_g)}  \rho_g(n) \overline{\rho_g(1)} + (\text{continuous}) \Big) + O(Q^{-100})
\end{equation}
The continuous spectrum contribution to $\mathcal{M}_q$ is nonnegative.
Next, for the case $l=q$, we wish to extend the spectral sum to a full orthonormal basis for $L^2(Y_0(q))$, not just newforms; that is, we include $\mathcal{B}_1^{(q)}$ and $\mathcal{B}_1^*$. We claim that this inclusion only increases the right hand side of \eqref{eq:MlPreKuz}, up to a negligible error term.  It is natural to combine the forms $g \in \mathcal{B}_1^{(q)}$ and the corresponding form $g_q \in \mathcal{B}_1^*$.  
%For $\mathcal{B}_1$, we reverse the above steps to see that one simply obtains $L(g \times \chi_D, 1/2)$ where now $g$ is of level $1$, plus an error term absorbed by that of \eqref{eq:MlPreKuz}.  
Recall from \eqref{geqn} that $g_q(z) = a g(z) + b g(qz)$ for $g\in \mathcal{B}_1^{(q)}$, where $|a| \asymp \frac{|\lambda_g(q)|}{\sqrt{q}}$, and $|b| \asymp 1$.  Thus $\rho_{g_q}(n) = a \rho_g(n) + b \rho_g(n/q)$ (the latter term denoting zero if $q \nmid n$), so that
\begin{equation*}
 \sum_{n=1}^{\infty} \frac{\rho_g(n) \overline{\rho_g(1)} \chi_D(n)}{\sqrt{n}} V(n/X) 
+ \sum_{n=1}^{\infty} \frac{\rho_{g_q}(n) \overline{\rho_{g_q}(1)} \chi_D(n)}{\sqrt{n}} V(n/X)
\end{equation*}
simplifies as
\begin{equation*}
|\rho_g(1)|^2 (1 + a^2) \sum_{n=1}^{\infty} \frac{\lambda_g(n) \chi_D(n)}{\sqrt{n}} V(n/X) + |\rho_g(1)|^2 \frac{ab \chi_D(q)}{\sqrt{q}}   \sum_{n=1}^{\infty} \frac{\lambda_g(n) \chi_D(n)}{\sqrt{n}} V(\frac{n}{X/q}). 
\end{equation*}
Using \eqref{eq:AFE} again (in reverse), we have that this becomes
\begin{equation*}
 |\rho_g(1)|^2(1 + a^2 + \frac{ab \chi_D(q)}{\sqrt{q}} ) L(g \times \chi_D, 1/2) + O(Q^{-100}),
\end{equation*}
where we use the fact that $X/q$ is still larger than the conductor of $L(g \times \chi_D, 1/2)$ since $g$ is level $1$.
 Standard bounds on $\lambda_g(q)$ show that $(1 + a^2 \pm \frac{ab}{\sqrt{q}}) \gg 1$ with an absolute implied constant, so by positivity of the central values again we see that the claim is proved.

Next we require the Kuznetsov formula.
\begin{lemma}  We have
\begin{multline}
 \sum_{g} \overline{\rho_g}(m) \rho_g(n) \frac{h(t_g)}{\cosh(\pi t_g)} + \sum_{\mathfrak{a}} \frac{1}{4 \pi} \intR \tau_{\mathfrak{a}}(m,t) \tau_{\mathfrak{a}}(n,t) \frac{h(t)}{\cosh( \pi t)} dt
\\
= \delta_{m,n} H_0 + \sum_{c \equiv 0 \shortmod{q}} \frac{S(m,n;c)}{c} H\big(\frac{4 \pi \sqrt{mn}}{c} \big),
\end{multline}
where the sum over $g$ runs over an orthonormal basis of Maass forms for $\Gamma_0(q)$, 
\begin{equation}
 H_0 = \pi^{-2} \intR r h(r) \tanh(\pi r) dr
\end{equation}
and
\begin{equation}
\label{eq:gdef}
 H(y) = \frac{2i}{\pi} \intR J_{2ir}(y) \frac{r h(r)}{\cosh( \pi r)} dr.
\end{equation}
\end{lemma}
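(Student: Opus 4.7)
The plan is to derive the Kuznetsov formula by the classical Poincaré series method, essentially as in Iwaniec's \emph{Spectral Methods of Automorphic Forms}, Chapter 9, or Kuznetsov's original argument. First I would introduce the family of Poincaré series
\[ P_m(z, s) = \sum_{\gamma \in \Gamma_\infty \backslash \Gamma_0(q)} \Im(\gamma z)^{s} e(m \gamma z), \qquad \Re(s) > 1, \]
for integers $m \geq 1$, and compute the inner product $\langle P_m(\cdot, s_1), P_n(\cdot, \bar{s}_2) \rangle_q$ in two independent ways.

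On the geometric side, I would unfold against $P_n$ and split the resulting double coset sum via the Bruhat decomposition of $\Gamma_0(q)$: the identity double coset contributes a $\delta_{m,n}$ term weighted by an explicit integral in $y$, while the non-trivial cosets are parameterized by lower-left entries $c \equiv 0 \pmod{q}$ and produce a sum of Kloosterman sums $S(m,n;c)$ weighted by a Bessel-type integral depending on $s_1, s_2$. On the spectral side, I would expand $P_m$ in the orthonormal basis $\mathcal{B}$ together with the Eisenstein series $E_{\mathfrak{a}}(\cdot, \tfrac{1}{2}+it)$, extract the Fourier coefficients via the standard Whittaker-function calculation, and identify the result with $\overline{\rho_g}(m) \rho_g(n)$ (respectively $\tau_{\mathfrak{a}}(m,t) \tau_{\mathfrak{a}}(n,t)$) weighted by an explicit function of $t_g$ (respectively $t$) and $s_1, s_2$. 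Equating the two expressions gives the Petersson-Bruggeman trace formula.

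The final step is to pass from the Petersson formula to the stated version with a general test function $h$. This is accomplished by a Sears-Titchmarsh / Lebedev integral transform applied to the spectral weight: one writes the given $h(t)/\cosh(\pi t)$ as a superposition of the weights produced by the Poincaré series (equivalently, one inverts the Bessel transform relating the two weights). Under this transform, the geometric-side Bessel weight becomes exactly $H$ as defined in \eqref{eq:gdef}, and the diagonal term becomes $H_0$ after evaluating the resulting integral in $r$. The conditions \eqref{eq:h} on $h$ — holomorphy in the strip $|\Im t| \leq A$, rapid decay at infinity, and vanishing at the exceptional points $\pm(2n+1)i/2$ — are designed precisely so that the inversion converges and so that contour shifts past the poles of $1/\cosh(\pi r)$ (and of the relevant gamma factors) pick up no residues.

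The main technical obstacle is the analytic bookkeeping: justifying interchange of summation and integration in the presence of the Kloosterman-sum series (where Weil's bound is invoked), carrying out the contour shifts in the $s$- and $r$-variables while tracking any residual contributions from small eigenvalues, and confirming that the vanishing at $\pm(2n+1)i/2$ imposed in \eqref{eq:h} exactly kills those residues. Once these points are dispatched, matching the normalizations of measure and Fourier coefficients gives the formula as stated.
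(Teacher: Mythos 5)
The paper gives no proof of this lemma: it is the Bruggeman--Kuznetsov trace formula for $\Gamma_0(q)$ in the same-sign case $m,n \geq 1$, quoted as a known result (Iwaniec, \emph{Spectral Methods of Automorphic Forms}, Theorem 9.3, or Deshouillers--Iwaniec for general level). Your outline is the standard classical derivation and is essentially sound: computing $\langle P_m(\cdot,s_1), P_n(\cdot,\bar{s}_2)\rangle_q$ by unfolding (diagonal term plus Kloosterman sums $S(m,n;c)$ over $c \equiv 0 \pmod{q}$ weighted by a Bessel-type kernel) and by spectral expansion, then passing to a general test function by Bessel-transform inversion (Sears--Titchmarsh / Kontorovich--Lebedev), is exactly how the formula is proved in the literature. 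Two remarks. First, in the direction needed here the inversion subtlety is mild: one \emph{defines} $H$ as the integral transform of $h$ and verifies the identity, so the well-known non-surjectivity of the Kloosterman-to-spectral transform (whose image misses the holomorphic spectrum) does not intervene. Second, you slightly misattribute the role of the hypotheses on $h$: for the trace formula itself only evenness, holomorphy in $|\mathrm{Im}(t)| \leq \tfrac12 + \varepsilon$, and decay $h(t) \ll (1+|t|)^{-2-\delta}$ are required; the stronger conditions the paper imposes (holomorphy in the wide strip $|\mathrm{Im}(t)| \leq A$ and vanishing at $\pm(2n+1)i/2$) are not needed to establish the identity but are used afterwards, to shift the contour in the definition of $H$ far into the complex plane past the poles of $1/\cosh(\pi r)$ without collecting residues, which is how the paper proves the decay bounds on $H$ and its derivatives. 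With the $L^2(Y_0(q))$-normalization of the coefficients $\rho_g(n)$ and the weight $h(t_g)/\cosh(\pi t_g)$, the constants $H_0$ and $H$ come out exactly as stated.
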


We then have for $l=1, q$,
\begin{equation}
 M_l \ll l H_0 +  l S_l + O(Q^{-100}), \qquad S_l = \sum_{n=1}^{\infty} \frac{\chi_D(n)}{\sqrt{n}} V(n/X) \sum_{c \equiv 0 \shortmod{q}} \frac{S(n,1;c)}{c} H\big(\frac{4 \pi \sqrt{n}}{c} \big).
\end{equation}
An easy calculation shows $H_0 \ll (T+1)M$, consistent with Theorem \ref{thm:firstmoment} (after adding up $\ll X^{\varepsilon}$ such intervals).

The proof of Theorem \ref{thm:firstmoment} then reduces to showing the following
\begin{proposition}
\label{prop:Kloostermanbound}
 We have
\begin{equation}
 S_l \ll l^{-1} \sqrt{|D|} X^{\varepsilon}.
\end{equation}
\end{proposition}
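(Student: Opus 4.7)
The plan is to apply Poisson summation to the $n$-sum modulo $Dc$ and analyze the resulting Fourier integral via stationary phase. Writing $W_c(n) := V(n/X) H(4\pi\sqrt n/c)/\sqrt n$, we have
\[ S_l = \sum_{l \mid c} \frac{1}{c} \sum_n \chi_D(n) S(n,1;c) W_c(n). \]
Since $\chi_D$ has period $D$ and the Kloosterman sum $S(\cdot,1;c)$ has period $c$, the product is periodic modulo $Dc$ in the generic case $(c,D)=1$, and Poisson yields
\[ \sum_n \chi_D(n) S(n,1;c) W_c(n) = \frac{1}{Dc} \sum_k C(k;D,c)\, \widetilde W_c\!\left(\frac{k}{Dc}\right), \]
with character sum $C(k;D,c) = \sum_{a\bmod Dc} \chi_D(a) S(a,1;c) e(ak/(Dc))$. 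The non-coprime case $(c,D)>1$ forms a thin set (using $(l,D)=1$) and is handled by a routine variant of the same argument.

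Next I would evaluate $C(k;D,c)$ via the Chinese Remainder Theorem. Splitting $a$ into residues modulo $D$ and modulo $c$, the sum factors: the $D$-piece is a Gauss sum giving $\tau(\chi_D)\chi_D(k)\bar\chi_D(c)$, while opening the Kloosterman and summing the resulting inner geometric series in the $c$-variable shows the $c$-piece equals $c \cdot e(-\bar k D/c)\,\mathbf{1}_{(k,c)=1}$. Two key consequences: the $k=0$ term vanishes identically, and factoring out $|\tau(\chi_D)|/D = 1/\sqrt{|D|}$ produces exactly the $\sqrt{|D|}$ targeted in the claim.

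For the analytic core, after substituting $u = \sqrt n$ and inserting the representation \eqref{eq:gdef} of $H$, the Fourier transform becomes
\[ \widetilde W_c\!\left(\frac{k}{Dc}\right) = \frac{4i}{\pi}\int_{-\infty}^{\infty} \frac{rh(r)}{\cosh(\pi r)} \int_0^\infty V(u^2/X)\, J_{2ir}(4\pi u/c)\, e(-ku^2/(Dc))\, du\, dr. \]
Using the uniform asymptotics of $J_{2ir}(y)$---exponential decay for $y<2r$, Airy transition near $y=2r$, and oscillation with phase $\sqrt{y^2-4r^2}-2r\cosh^{-1}(y/(2r))$ for $y>2r$---combined with stationary phase or repeated integration by parts simultaneously in both $u$ and $r$, one establishes that $\widetilde W_c(k/(Dc))$ is negligible outside the range $c \leq X^\varepsilon \sqrt X/T$ and $|k| \leq X^\varepsilon D/(Tc)$, with $|\widetilde W_c(k/(Dc))| \ll cT \cdot X^\varepsilon$ inside it. Assembling the pieces,
\[ |S_l| \ll \frac{X^\varepsilon}{\sqrt{|D|}} \sum_{\substack{l \mid c \\ c \ll X^\varepsilon D/T}} \frac{1}{c} \cdot \frac{D}{Tc} \cdot cT \;=\; X^\varepsilon\sqrt{|D|} \sum_{\substack{l \mid c \\ c \ll X^\varepsilon D/T}} \frac{1}{c} \;\ll\; l^{-1}\sqrt{|D|}\, X^{\varepsilon}. \]

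The main obstacle is the simultaneous stationary-phase analysis in $u$ and $r$. The Bessel kernel $J_{2ir}(y)$ has three genuinely different regimes, and a naive pointwise bound on $H$ followed by stationary phase in $u$ alone would leave a spurious factor of $M$ (the width of $h$) in the estimate for $\widetilde W_c$. Removing this extra $M$ forces one to exploit the precise oscillation in $r$ provided by the uniform Debye-type expansion of $J_{2ir}$, together with careful treatment of the transitional Airy regime $y\sim 2r$ (which, in the variable $u$, corresponds to the boundary $u\sim Tc$ of the effective support). A secondary technical issue is the handling of the excluded coprimality case $(c,D)>1$, which is controlled by a minor variant of the main argument using $(l,D)=1$.
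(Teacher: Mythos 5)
Your outline follows the paper's own proof closely: Poisson summation in $n$ modulo $c|D|$, a CRT factorization and square-root bound for the complete exponential sum, and a stationary-phase analysis of the Bessel-type transform exploiting oscillation in both the $n$-variable and the spectral variable; your final bookkeeping ($c\sqrt{|D|}$ from the arithmetic factor, roughly $|D|/(Tc)$ surviving frequencies, $cT$ per frequency) is consistent with what Lemma~\ref{lemma:arithmetical} and Lemma~\ref{lemma:analytical} deliver. However, two of your specific claims would fail as stated. First, the set of moduli $c \leq C$ with $(c,D)>1$ is not thin --- if $p \mid D$ then a proportion about $1/p$ of all admissible $c$ share the factor $p$ --- so this case cannot be discarded on density grounds; the paper instead extends the CRT evaluation to prime powers dividing both $c$ and $D$ (computing Gauss and Sali\'e-type local factors) and obtains the uniform bound $|a(m;c,D)| \ll c|D|^{1/2}(m,c,D)^{1/2}$, whose gcd factor is harmless in the final summation.

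Second, the zero frequency does not ``vanish identically'' in general: your Gauss-sum evaluation proves this only when $(c,D)=1$. For $l=q$ one can still conclude $a(0;c,D)=0$ because the local factor at the prime $q \mid c$ (with $(q,D)=1$) vanishes at $m=0$; but for $l=1$ and $c$ built from primes dividing $D$ the arithmetic factor at $k=0$ is generically nonzero, and the paper must instead show analytically that $r(0;c,D) = O(X^{-100})$ once $T \geq X^{\varepsilon}$, having first reduced to $T \geq |D|^{\varepsilon}$ by applying convexity termwise. Finally, the heart of the matter --- improving the trivial bound $N(T+1)M$ for the oscillatory integral to $NX^{\varepsilon}$ --- is asserted rather than carried out; the paper does this not via uniform Debye/Airy asymptotics of $J_{2ir}$ but by converting $H(y)$ into the explicit oscillatory integral \eqref{eq:Hintegral} using the representation $J_{\nu}(y) = \frac{2}{\pi}\int_0^{\infty} \sin(y\cosh t - \nu\pi/2)\cosh(\nu t)\,dt$, and then running two successive stationary-phase arguments (Lemma 8.1 and Proposition 8.2 of \cite{BKY}) in the $x$- and $v$-variables, which sidesteps the transition-region difficulties you correctly anticipate.
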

The proof of Proposition \ref{prop:Kloostermanbound} requires some auxiliary lemmas.
We presently develop some properties of $H(y)$.
\begin{lemma}
Suppose $h$ satisfies \eqref{eq:h}.  Then
\begin{equation}
\label{eq:Hbound}
H^{(j)}(y) \ll_{j,A} (T+1)M \big(\frac{y}{T+1})^{A}, \qquad j=0,1,2,\dots, 200.
\end{equation}
Furthermore, if $h$ is of the form \eqref{eq:hchoice},
there exist functions $u_{\pm}$ satisfying the derivative bound
\begin{equation}
\frac{d^j}{d v^j} u_{\pm}(v) \ll_{j,A} (1 + |v|)^{-A},
\end{equation}
so that
\begin{equation}
\label{eq:Hintegral}
 H(y) = (T+1) \sum_{\delta = \pm} \intR e(\delta T v/M) \cos(y \cosh(\pi v/M)) u_{\delta}(v) dv. 
\end{equation}
\end{lemma}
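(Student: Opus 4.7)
For the decay estimate \eqref{eq:Hbound}, I would exploit holomorphy of the integrand in $r$ and shift the contour. Under the vanishing conditions in \eqref{eq:h}, the quotient $h(r)/\cosh(\pi r)$ is holomorphic on the strip $|\Im r|\leq A$: the zeros of $h$ at $\pm i(2n+1)/2$, $0\leq n\leq A$, cancel the poles of $\sec(\pi r)$. Inserting the Taylor expansion
\[
J_{2ir}(y) = \sum_{k\geq 0} \frac{(-1)^k(y/2)^{2ir+2k}}{k!\,\Gamma(2ir+k+1)}
\]
into $H(y)$, interchanging sum and integral (justified termwise by Stirling's asymptotic for $\Gamma(2ir+k+1)^{-1}$), and shifting the contour from $\Im r = 0$ to $\Im r = -A$, each term acquires $|(y/2)^{2ir}| = (y/2)^{2A}$ on the new contour. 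The remaining integrand is controlled using Stirling on $\Gamma^{-1}$ (whose decay in $|r|$ cancels the exponential growth of $1/\cosh(\pi(r-iA))$) together with $|rh(r-iA)|\ll (T+1)M(1+|r|)^{-100}$; summing the Taylor tail yields $H(y)\ll (T+1)M(y/(T+1))^A$. For $j$th derivatives, the recurrence $J_\nu'(y) = \tfrac12(J_{\nu-1}(y)-J_{\nu+1}(y))$ and iteration reduce to the same shift argument applied to $J_{2ir+m}$ with $|m|\leq j$, the shifts being absorbed by taking $A$ slightly larger.

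For the integral representation \eqref{eq:Hintegral}, I would rewrite the Bessel kernel via Hankel-type formulas (Watson, \emph{Theory of Bessel Functions}, \S 6.22). For $\nu = 2ir$,
\[
H^{(1)}_{2ir}(y) = -\tfrac{2ie^{\pi r}}{\pi}\int_0^\infty e^{iy\cosh u}\cos(2ru)\,du, \quad H^{(2)}_{2ir}(y) = \tfrac{2ie^{-\pi r}}{\pi}\int_0^\infty e^{-iy\cosh u}\cos(2ru)\,du,
\]
and $J_{2ir} = \tfrac12(H^{(1)}_{2ir}+H^{(2)}_{2ir})$, so a short algebraic manipulation yields
\[
\frac{J_{2ir}(y)}{\cosh(\pi r)} = \frac{2}{\pi}\int_0^\infty \sin(y\cosh u)\cos(2ru)\,du - \frac{2i\tanh(\pi r)}{\pi}\int_0^\infty \cos(y\cosh u)\cos(2ru)\,du.
\]
Inserting into $H(y)$, the first term integrates to zero against $rh(r)$ (odd in $r$), and by evenness of the remaining integrand one extends $u$ to all of $\R$ to obtain
\[
H(y) = \frac{1}{\pi^2}\intR \cos(y\cosh u)\intR rh(r)\tanh(\pi r)e^{-2iru}\,dr\,du.
\]

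Finally I would substitute $u = \pi v/M$ and analyze the inner integral as a Fourier transform localized near $r=\pm T$. For $h$ of the form \eqref{eq:hchoice}, $rh(r)\tanh(\pi r)$ is concentrated in two bands of width $O(M)$ about $r=\pm T$; rescaling $r=\pm T + Ms$ in each band and using $\tanh(\pi(\pm T + Ms)) = \pm 1 + O(e^{-2\pi T})$ together with $rP(r) = \pm TP(T) + O(M|s|)$, the inner integral factors as
\[
M(T+1)\bigl[e(Tv/M)u_+(v) + e(-Tv/M)u_-(v)\bigr]\cdot C
\]
with $u_\pm(v)$ equal, up to constants, to the Fourier transform in $s$ of a Schwartz function of the form $Q(s)e^{-s^2}$ with $Q$ polynomial; hence $u_\pm$ is Schwartz and satisfies the asserted derivative bounds. (The case $T=0$, $M=1$ is handled by the same calculation without band-splitting.) The chief obstacle is justifying absolute convergence at the Hankel step: the individual $u$-integrals above are only conditionally convergent, so the exchange of orders of integration needs a regularizing factor $e^{-\epsilon u}$ (or equivalently a distributional interpretation in $u$) before the Fourier localization argument runs cleanly.
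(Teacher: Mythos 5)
Your treatment of the representation \eqref{eq:Hintegral} is essentially the paper's: the Mehler--Sonine/Hankel formula for $J_{2ir}(y)/\cosh(\pi r)$, parity in $r$ to kill the $\sin(y\cosh u)$ term, and identification of $u_{\pm}$ as Fourier transforms of Schwartz-type profiles concentrated at $r=\pm T$. The convergence issue you flag at the end is real but is resolved in the paper by a single concrete integration by parts, which gives $\int_{|u|\geq V} e(ru/\pi)\cos(y\cosh u)\,du \ll (1+|r|)/(y e^{V})$ and thus permits truncation of the $u$-integral and interchange of the orders of integration without any regularizing factor.

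Your argument for \eqref{eq:Hbound}, however, has a genuine gap. After shifting to $\mathrm{Im}\, r=-A$ you estimate the Taylor series of $J_{2A+2i\sigma}(y)$ term by term. Since $|\Gamma(2A+1+2i\sigma+k)| \geq |\Gamma(2A+1+2i\sigma)|\,(2|\sigma|)^{k}$ and all terms are positive after taking absolute values, the best this yields is
\begin{equation*}
\sum_{k\geq 0}\frac{(y/2)^{2A+2k}}{k!\,|\Gamma(2A+1+2i\sigma+k)|} \asymp \frac{(y/2)^{2A}}{|\Gamma(2A+1+2i\sigma)|}\,e^{c\,y^{2}/|\sigma|},
\end{equation*}
and the exponential factor is not an artifact: term-by-term summation of the alternating Bessel series is exponentially lossy once $y\gtrsim\sqrt{|\nu|}$ (already $\sum_k (y/2)^{2k}/(k!)^2 = I_0(y)\sim e^{y}/\sqrt{2\pi y}$ while $J_0(y)=O(y^{-1/2})$). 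With $|\sigma|\asymp T$ this only recovers \eqref{eq:Hbound} for $y\ll\sqrt{T}$, whereas the lemma claims it for all $y$, and the application (discarding $c>C=\sqrt{N}X^{\varepsilon}/(T+1)$) requires it for all $y\leq (T+1)X^{-\varepsilon}$ --- in particular in the range $\sqrt{T}\ll y\ll T^{1-\varepsilon}$, where $e^{cy^{2}/T}$ overwhelms any fixed power $(y/T)^{A}$. The repair is exactly what the paper does: on the shifted contour apply Poisson's integral representation (\cite{GR} 8.411.4), valid for $\mathrm{Re}(\nu)>-\tfrac12$, which gives the uniform-in-$y$ bound $J_{A+2iv}(y)\ll_{A}\big(y/(1+|v|)\big)^{A}\exp(\pi|v|)$ because the cancellation that keeps $J_{\nu}$ small for large $y$ is built into the single oscillatory integral rather than spread across the series. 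Your reduction of the derivative cases $j\leq 200$ to $j=0$ via the recurrence for $J_{\nu}'$ is fine once the $j=0$ case is proved this way.
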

\begin{proof}
For \eqref{eq:Hbound}, we move the contour of integration in \eqref{eq:gdef} so $\text{Re}(2ir) = A$ and use the bound
\begin{equation}
 J_{A + 2iv}(y) \ll_A \big(\frac{y}{1 + |v|} \big)^{A} \exp(\pi |v|),
\end{equation}
which follows from the integral representation (\cite{GR} 8.411.4)
\begin{equation*}
 J_{\nu}(y) = 2 \frac{(y/2)^{\nu}}{\Gamma(\nu + \half) \Gamma(\half)} \int_0^{\pi/2} (\sin \theta)^{2\nu}  \cos(y \cos \theta) d\theta, \quad \text{Re}(\nu) > -\half.
\end{equation*}
The estimates for the derivatives of $H$ follow similarly by the use of the formula
\begin{equation*}
 \frac{d}{dz} J_{\nu}(z) = \half J_{\nu-1}(z) - \half J_{\nu+1}(z)
\end{equation*}
and changing variables $r \rightarrow r \pm i/2$.  This process shows that $H^{(j)}$ is given by an integral representation similar to that of $H$ but with a slightly different kernel function $r h(r)/\cosh(\pi r)$ replaced by linear combinations of $(r \pm k i/2) h(r \pm ki/2)/\cosh(\pi (r \pm k i/2)$ with $k \in \{ -200, -199, \dots, 200 \}$.  Thus \eqref{eq:Hbound} holds for $1 \leq j \leq 200$ also.

Next we show \eqref{eq:Hintegral}.  Using the fact that $h$ is even and 
8.411.11 of \cite{GR}, that is
\begin{equation*}
 J_{\nu}(y) = \frac{2}{\pi} \int_0^{\infty} \sin(y \cosh t - \frac{\nu \pi}{2}) \cosh(\nu t) dt,
\end{equation*}
we derive the integral representation
 \begin{equation}
 H(y) = \frac{2}{\pi^2} \intR r \tanh( \pi r) h(r) \intR  e(\frac{r v}{\pi}) \cos(y \cosh v) dv dr.
 \end{equation}
 Integrating by parts once in the inner $v$-integral shows that for $V \geq 1$
 \begin{equation*}
 \int_{|v| \geq V} e(\frac{r v}{\pi}) \cos(y \cosh v) dv \ll \frac{1 + |r|}{y \exp(V)}.
 \end{equation*}
 Then by interchanging the orders of integration, we have
\begin{equation}
H(y) = \int_{|v| \leq V} \cos(y \cosh v) U(v) dv + O((T+1)^2 M y^{-1} \exp(-V)),
\end{equation}
where
\begin{equation}
 U(v) = \intR \frac{2}{\pi^2} r \tanh(\pi r) h(r) e(\frac{r v}{\pi}) dr.
\end{equation}
Using the formula \eqref{eq:hchoice}, and changing variables, we have
\begin{equation*}
 U(v) = M (T+1) e(\frac{\delta Tv}{\pi}) \frac{1}{\pi^2} \sum_{\delta = \pm }  \intR \frac{Mr + \delta T}{T+1} P(Mr + \delta T) \tanh(\pi(Mr + \delta T)) e^{-r^2} e(\frac{Mvr}{\pi}) dr. 
\end{equation*}
The integral becomes the Fourier transform, evaluated at $-Mv/\pi$, of a function $W(r) = w_{M,T,\delta}(r)$ satisfying 
\begin{equation}
 \label{eq:wbound}
W^{(j)}(r) \ll_{j,A} (1 + |r|)^{-A}
\end{equation}
and hence its Fourier transform $\widehat{W}$ also satisfies \eqref{eq:wbound}.  Changing variables $v \rightarrow v \pi/M$ and taking the limit as $V \rightarrow \infty$ then gives \eqref{eq:Hintegral}.
\end{proof}

Now we begin the analysis of $S_l$.  By applying a smooth dyadic partition of unity to the $n$-sum, we may assume $N \leq n \leq 2N$, where $1 \leq N \ll X$.
Using \eqref{eq:Hbound} allows us to assume that $c \leq C$, where
\begin{equation}
 C = \frac{ \sqrt{N}}{T+1} X^{\varepsilon}.
\end{equation}
Thus it suffices to show $S(N) \ll l^{-1} \sqrt{|D|} X^{\varepsilon}$, where
\begin{equation}
 S(N) = \sum_{\substack{c \equiv 0 \shortmod{q} \\ c \leq C}} c^{-1} N^{-\half} S_c(N), \qquad S_c(N):= \sum_{n=1}^{\infty} \chi_D(n) S(n,1;c) w(n) H\big(\frac{4 \pi \sqrt{n}}{c} \big),
\end{equation}
where $w$ has support in $[N, 2N]$ and satisfies $w^{(j)}(x) \ll N^{-j}$.

By Poisson summation in $n$ modulo $c|D|$, we have
\begin{equation}
 S_c(N) = \sum_{m \in \mz} \frac{1}{c|D|} a(m;c,D) r(m;c,D),
\end{equation}
where
\begin{equation}
 a(m;c,D) = \sum_{x \shortmod{cD}} \chi_D(x) S(x,1;c) \e{m x}{c|D|},
\end{equation}
and
\begin{equation}
 r(m;c,D) = \int_0^{\infty} w(x) H\big(\frac{4 \pi \sqrt{x}}{c} \big) \e{-mx}{c|D|} dx.
\end{equation}
\begin{lemma}
\label{lemma:arithmetical}
 Suppose that $c = c_1 c_2$ and $D=D_1 D_2$ with $(c_1 D_1, c_2 D_2) = 1$.  Then
\begin{equation}
\label{eq:CRT}
 a(m;c_1 c_2; D_1 D_2) = a(m c_2 \overline{D_2};c_1, D_1) a(m c_1 \overline{D_1};c_2, D_2).
\end{equation}
Furthermore, we have the bound
\begin{equation}
\label{eq:abound}
 |a(m;c,D)| \leq 4^{\nu+2} c |D|^{1/2} (m,c,D)^{1/2},
\end{equation}
where $c = 2^{\nu} c'$ with $(c',2) = 1$.
Finally, $a(0;c,D) = 0$ if $q|c$.
\end{lemma}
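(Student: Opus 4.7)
The three parts of the lemma invite distinct techniques. For the CRT identity \eqref{eq:CRT}, I would decompose the summation variable $x \bmod c_1 c_2 D_1 D_2$ as a pair $(y, z) \in \mz/c_1 D_1 \times \mz/c_2 D_2$ via the Chinese Remainder Theorem, which is available since $(c_1 D_1, c_2 D_2) = 1$. Because $-D$ is an odd fundamental discriminant, the factorization $\chi_{D_1 D_2}(x) = \chi_{D_1}(y) \chi_{D_2}(z)$ is clean. The Kloosterman sum admits the standard CRT factorization $S(x,1;c_1 c_2) = S(x \overline{c_2}^2, 1; c_1) \cdot S(x \overline{c_1}^2, 1; c_2)$ (via the identity $S(a, b; c) = S(ab, 1; c)$ when $(b, c) = 1$), and the exponential $e(mx/(c_1 c_2 D_1 D_2))$ splits via the partial-fraction identity $\tfrac{1}{c_1 c_2 D_1 D_2} \equiv \tfrac{\overline{c_2 D_2}}{c_1 D_1} + \tfrac{\overline{c_1 D_1}}{c_2 D_2} \pmod 1$. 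Substituting $y \mapsto c_2^2 y$ (a bijection on $\mz/c_1 D_1$ that fixes $\chi_{D_1}$ since $c_2^2$ is a square and $(c_2, c_1 D_1) = 1$) absorbs the $\overline{c_2}^2$ in the Kloosterman factor and produces the claimed argument $m c_2 \overline{D_2}$, with the same treatment for $z$.

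For the bound \eqref{eq:abound}, the multiplicativity just established reduces the estimate to prime-power pieces $|a(m'; p^a, p^b)|$, where $b \in \{0, 1\}$ for odd $p$ (since $-D$ is odd fundamental) and $b = 0$ for $p = 2$. In the case $b = 0$, expanding $S(x, 1; p^a)$ and interchanging summation, orthogonality in $x$ forces $u \equiv -m \pmod{p^a}$, whence $|a(m; p^a, 1)| = p^a$ if $(m, p) = 1$ and vanishes otherwise. In the case $b = 1$ with $p$ odd, decomposing $x = x_0 + p x_1$ (with $x_0 \bmod p$, $x_1 \bmod p^a$) and applying orthogonality in $x_1$ forces $p \mid m$; writing $m = p m'$, the residual sum collapses to a twisted Kloosterman/Salié-type sum $\sum_{u \in (\mz/p)^*} \bar\chi_p(u+m') e(\bar u/p)$, to which Weil's bound supplies an $O(\sqrt{p})$ estimate. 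Aggregating the local contributions and the trivial $\sqrt{|D_p|}$ from primes $p \mid D$ yields the claimed $c\sqrt{|D|} (m, c, D)^{1/2}$; the generous constant $4^{\nu+2}$ absorbs the Weil-bound constants and any loss from the 2-adic bookkeeping.

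For the vanishing in Part 3, with $m = 0$ the exponential factor collapses, so $a(0; c, D) = \sum_{x \bmod cD} \chi_D(x) S(x, 1; c)$. Since $(q, D) = 1$ by hypothesis, I apply \eqref{eq:CRT} with $c_1 = q^{v_q(c)}$, $D_1 = 1$, $c_2 = c/c_1$, and $D_2 = D$, yielding $a(0; c, D) = a(0; q^{v_q(c)}, 1) \cdot a(0; c_2, D)$. Writing $k = v_q(c) \geq 1$, the first factor is
\begin{equation*}
\sum_{x \bmod q^k} S(x, 1; q^k) = \sum_{\substack{u \bmod q^k \\ (u, q) = 1}} e(\bar u / q^k) \sum_{x \bmod q^k} e(xu/q^k),
\end{equation*}
and the inner $x$-sum vanishes by orthogonality because $(u, q) = 1$ forces $q^k \nmid u$.

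The main obstacle lies in the mixed case $p \mid (c, D)$ in Part 2, particularly for $a \geq 2$: extending the Weil-type bound to the twisted sum at prime powers (via completion, $p$-adic stationary phase, or a direct Gauss-sum calculation) with explicit enough constants to end up with the stated $4^{\nu+2}$ requires careful bookkeeping, but no new ideas beyond standard character-sum techniques.
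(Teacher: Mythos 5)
Your outline follows the paper's proof closely: the same CRT decomposition of $x$ modulo $c_1c_2D_1D_2$, the same splitting of the character, the Kloosterman sum, and the exponential, and the same change of variables $y \mapsto c_2^2 y$ for \eqref{eq:CRT}; the same reduction to local factors for \eqref{eq:abound}; and essentially the same mechanism for the vanishing of $a(0;c,D)$ (the paper likewise reduces to $a(0;q^k,1)=0$ for $k \geq 1$, which falls out of the computation in the case where the discriminant exponent is zero). The one place you stop short is exactly the case you flag at the end: $p$ odd with $p \mid D$ and $p$ dividing $c$ to a power $k \geq 2$. This case is genuinely needed --- in the application $c$ runs over all multiples of $q$ up to $C$ and can share odd prime factors of $D$ to arbitrary powers --- so it cannot be deferred as bookkeeping.

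The paper closes that case by an explicit elementary evaluation rather than an appeal to a general Weil-type bound at prime powers. After forcing $p \mid m$ and writing $m = pm_1$, one opens the Kloosterman sum with variable $y$, writes $x = x_1 + px_2$ with $x_1$ modulo $p$ and $x_2$ modulo $p^{k-1}$, and orthogonality in $x_2$ localizes the Kloosterman variable to $y \equiv -m_1 \pmod{p^{k-1}}$ (so in particular $(m_1,p)=1$, else the sum vanishes). Writing $y = -m_1 + vp^{k-1}$ and using $\overline{y} \equiv -\overline{m_1} - v\overline{m_1}^{\,2} p^{k-1} \pmod{p^k}$, the $x_1$-sum is a Gauss sum and the remaining $v$-sum is another Gauss sum, giving
\begin{equation*}
a(m;p^k,\pm p) = p^{k+1}\, e\bigl(-\overline{m_1}/p^k\bigr),
\end{equation*}
which matches the claimed bound $p^k \cdot p^{1/2} \cdot (m,p^k,p)^{1/2} = p^{k+1}$ exactly. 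Thus the Weil input (via Schmidt) is only needed for the Sali\'{e}-type sum at $k=1$, and the $k \geq 2$ case is a short stationary-phase computation, not an extension of Weil to prime powers. With that subcase filled in, your argument is complete and identical in substance to the paper's.
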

\begin{lemma}
\label{lemma:analytical}
 We have
\begin{equation}
\label{eq:rbound}
 r(m;c,D) \ll %TMc \sqrt{N} (1 + \frac{|m| \sqrt{N}}{D})^{-100} (1 + \frac{c TM }{\sqrt{N}})^{-100}.
N X^{\varepsilon} (1 + \frac{|m| \sqrt{N}}{|D|})^{-2}.
\end{equation}
Furthermore, if $T \geq X^{\varepsilon}$ then %$r(m;c,D)$ is $O(X^{-100})$ unless $|m| \asymp \frac{D}{\sqrt{N}}$, and in particular 
$r(0;c,D) = O(X^{-100})$.
\end{lemma}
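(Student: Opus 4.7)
The plan is to substitute the oscillatory integral representation \eqref{eq:Hintegral} for $H$ into the definition of $r(m;c,D)$, write $\cos$ as a sum of complex exponentials, and interchange integrations to obtain a double oscillatory integral in $(x,v)$. The rapid decay of $u_\delta$ lets us truncate the $v$-integral to $|v|\le X^\varepsilon$ at the cost of a negligible error, yielding
\[
r(m;c,D) = \frac{T+1}{2}\sum_{\delta,\epsilon\in\{\pm\}} \iint w(x) u_\delta(v)\, e(\Psi_{\delta,\epsilon}(x,v))\,dx\,dv + O(X^{-100}),
\]
where $\Psi_{\delta,\epsilon}(x,v) = \delta Tv/M + 2\epsilon\sqrt{x}\cosh(\pi v/M)/c - mx/(c|D|)$.

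For the first bound on $r(m;c,D)$, I would apply repeated integration by parts in $x$, using the phase derivatives
\[
\partial_x\Psi = \frac{1}{c}\!\left(\frac{\epsilon\cosh(\pi v/M)}{\sqrt{x}}-\frac{m}{|D|}\right), \qquad \partial_x^2\Psi = -\frac{\epsilon\cosh(\pi v/M)}{2cx^{3/2}}.
\]
For $x \in [N,2N]$, the phase is stationary only when $\epsilon=\sgn(m)$ and $\cosh(\pi v/M)\asymp |m|\sqrt{N}/|D|$, which cuts out a $v$-window of measure $O(M)$. Off this window one has $|\partial_x\Psi| \gtrsim (\cosh(\pi v/M) + |m|\sqrt{N}/|D|)/(c\sqrt{N})$, so iterated IBP $A$ times gives
\[
\Big|\int w(x) e(\Psi)\,dx\Big| \ll_A N\left(\frac{c}{\sqrt{N}\bigl(\cosh(\pi v/M)+|m|\sqrt{N}/|D|\bigr)}\right)^{\!A}.
\]
Substituting $c\le C = \sqrt{N}X^{\varepsilon_0}/(T+1)$ with $\varepsilon_0 < \varepsilon$, integrating over $v$ against $u_\delta$, and multiplying by the prefactor $(T+1)$ bounds the non-stationary part by $NX^\varepsilon(1+|m|\sqrt{N}/|D|)^{-2}$ with $A=2$. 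Inside the stationary window I would apply stationary phase in $x$, producing amplitude $(cN|D|/|m|)^{1/2}$, and then extract further oscillation from $e(\delta Tv/M)$ by integrating by parts in $v$ against the composite phase $\delta Tv/M + \cosh^2(\pi v/M)|D|/(cm)$, whose derivative $\delta T/M + \pi\sinh(2\pi v/M)|D|/(cmM)$ is bounded away from zero outside an even narrower subwindow.

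For the second claim, $r(0;c,D) = O(X^{-100})$ when $T\ge X^\varepsilon$, the situation is simpler: setting $m=0$ removes the stationary point altogether, since $|\partial_x\Psi|\asymp \cosh(\pi v/M)/(c\sqrt{N}) \ge 1/(c\sqrt{N})$. The gain per IBP is then $c/\sqrt{N} \le X^{\varepsilon_0}/(T+1) \le X^{\varepsilon_0-\varepsilon}$, which is strictly less than $1$ upon choosing $\varepsilon_0 < \varepsilon$. Iterating $A$ times and absorbing the outer $(T+1)$ factor produces arbitrary polynomial decay in $X$.

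The main obstacle is the stationary regime in the first bound, where the naive estimate (stationary-phase amplitude $\times$ $v$-window length $\times$ $(T+1)$) can exceed the target $N\,(|D|/(|m|\sqrt{N}))^2$. Overcoming this requires genuinely extracting the cancellation supplied by $e(\delta Tv/M)$ through an additional IBP (or secondary stationary phase) in $v$, and carefully exploiting $c\le C$ to absorb the factor $T+1$. This step is the most delicate, and sensitive to the precise relation $M \le T+1$ and $c \le \sqrt{N}X^{\varepsilon_0}/(T+1)$.
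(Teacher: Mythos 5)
Your overall route --- inserting \eqref{eq:Hintegral}, truncating the $v$-integral, and splitting the $x$-integral into non-stationary and stationary regimes --- is the same as the paper's, and your non-stationary analysis and your treatment of $r(0;c,D)$ match it. But there are two concrete gaps, both located in the step you yourself flag as delicate. The first is that your argument relies on extracting oscillation from $e(\delta Tv/M)$, which is vacuous when $T$ is small. The paper separates the case $T \leq X^{\varepsilon}$ and handles it by a purely non-oscillatory integration by parts in $x$ against $e(-mx/(c|D|))$, using only the derivative bounds \eqref{eq:Hbound} on $H$ as an amplitude; there the trivial bound $(T+1)MN \ll NX^{\varepsilon}$ already covers the stationary range. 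You need this case split or some substitute for it.

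The second gap is that, in the stationary regime with $T \geq X^{\varepsilon}$, the input $c \leq C \asymp \sqrt{N}X^{\varepsilon}/(T+1)$ is quantitatively insufficient to absorb the prefactor $T+1$. Note first that on $|v| \leq M^{\varepsilon}$ one has $\cosh(\pi v/M) \asymp 1$, so stationarity forces $|m| \asymp |D|/\sqrt{N}$ and the target there is simply $NX^{\varepsilon}$; no further decay in $m$ is needed. Writing $Y = \sqrt{N}/c$, stationary phase in $x$ followed by stationary phase in $v$ (the $v$-phase has second derivative $\asymp Y/M^2$) yields a total of order $NTM/Y$, which is $\leq NX^{\varepsilon}$ only when $c \leq \sqrt{N}X^{\varepsilon}/(MT)$ --- a restriction stronger than $c \leq C$ by a factor of $M$. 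The paper supplies the missing restriction by observing that the stationary point of the $v$-phase sits at $|v| \asymp TM/Y$; when $c \geq \sqrt{N}X^{\varepsilon}/(MT)$ this lies outside the truncated range, so $F_2'(v) \gg T/M$ throughout $|v| \leq M^{\varepsilon}$ and the $v$-integral is negligible, which is exactly \eqref{eq:csharpbound}. Without this dichotomy on $c$, the stationary contribution in your scheme exceeds \eqref{eq:rbound} by up to a factor of $M$.
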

We postpone the proof of these two lemmas and finish the proof of Proposition \ref{prop:Kloostermanbound}.  In the $\Gamma_0(q)$ case (where the term $m=0$ vanishes), we have
\begin{equation}
\label{eq:SNbound}
S(N) \ll X^{\varepsilon} \sum_{\substack{c \equiv 0 \shortmod{q} \\ c \leq C}} \sum_{m \neq 0} \frac{1}{c \sqrt{N}} |D|^{-1/2} (c,D,m) N (\frac{m \sqrt{N}}{|D|})^{-2} \ll X^{\varepsilon} \sqrt{|D|}/q.
\end{equation}
The case of $\Gamma_0(1)$ gives an identical bound to \eqref{eq:SNbound} (with $q=1$) since in this case the $m=0$ term is practically negligible by Lemma \ref{lemma:analytical}.

\begin{proof}[Proof of Lemma \ref{lemma:arithmetical}]
 By the Chinese remainder theorem, write
\begin{equation*}
 x = x_1 c_2 D_2 \overline{c_2} \overline{D_2} + x_2 c_1 D_1 \overline{c_1} \overline{D_1},
\end{equation*}
where $x_1$ runs modulo $c_1 D_1$, $x_2$ runs modulo $c_2 D_2$, and $c_i \overline{c_i} \equiv D_i \overline{D_i} \equiv 1 \pmod{c_j D_j}$ with $i \neq j$.  Then using $\chi_{D_1 D_2}(x) = \chi_{D_1}(x) \chi_{D_2}(x)$, we have
\begin{equation*}
 a(m;c,D) = \Big( \sum_{x_1 \shortmod{ c_1 D_2}} \chi_{D_1}(x_1) S(x_1 \overline{c_2^2}, 1;c_1) \e{m x_1 \overline{c_2 D_2}}{c_1 D_1} \Big) ( \text{similar} ) ,
\end{equation*}
where the term ``similar'' is identical to the first term but with $c_1$ switched with $c_2$, $D_1$ switched with $D_2$, and $x_1$ switched with $x_2$.  Changing variables $x_1 \rightarrow c_2^2 x_1$ and $x_2 \rightarrow c_1^2 x_2$, we immediately obtain \eqref{eq:CRT}.

We shall bound $a(m;c,D)$ for $c$ and $D$ powers of the same prime $p$; this suffices by \eqref{eq:CRT}.  If $p=2$ we only claim the trivial bound for simplicity.  Now suppose $p$ is odd.
By opening the Kloosterman sum, we have
\begin{equation}
 a(m;p^c, \pm p^D) = \sum_{x \shortmod{p^{c+D}}} \; \sumstar_{y \shortmod{p^c}} \chi_{\pm p^D}(x) \e{xy + \overline{y}}{p^c} \e{mx}{p^{c+D}},
\end{equation}
where $\pm p^D$ is a fundamental discriminant, or $1$.
We consider a variety of cases.
Suppose $D=0$ and $c \geq 1$.  Then the sum over $x$ vanishes unless $y \equiv -m \pmod{p^c}$ in which case necessarily $(m,p) = 1$.  Hence
\begin{equation}
 a(m;p^c, 1) = p^c \e{-\overline{m}}{p^c}.
\end{equation}
If $D = 1$ and $c=0$ then we obtain a Gauss sum, so
\begin{equation}
 a(m;1, \pm p) =  \epsilon_p \leg{m}{p} \sqrt{p}.
\end{equation}
 Suppose now that $c \geq 1$.  Changing variables $x \rightarrow x + p^c$ shows that $a(m;p^c,\pm  p) = \e{m}{p} a(m;p^c, \pm p)$ so it vanishes unless $p|m$.  Accordingly, write $m = p m_1$.  The sum over $x$ is then periodic modulo $p^c$ and is therefore the same sum repeated $p$ times, so we have
\begin{equation}
 a(m;p^c, \pm p) = p \sum_{x \shortmod{p^{c}}} \; \sumstar_{y \shortmod{p^c}} \leg{x}{p} \e{xy + \overline{y}}{p^c} \e{m_1 x}{p^{c}}.
\end{equation}
Suppose that $c=1$.  Then the sum over $x$ is a Gauss sum, and we have
\begin{equation}
 a(m;p, \pm p) = \epsilon_p p^{3/2}  \sumstar_{y \shortmod{p}} \leg{m_1 + y}{p} \e{\overline{y}}{p}.
\end{equation}
By \cite{Schmidt} Theorem 2.6, the inner sum over $y$ is bounded in absolute value by $\sqrt{p}$, so $|a(m;p,\pm p)| \leq p^2 = p^c p^{D/2} (m,p^c, p^D)^{1/2}$, as desired.  For the purpose of proving Proposition \ref{prop:Kloostermanbound} we could get away with using only the trivial bound in place of the Riemann Hypothesis for curves.
 Now suppose that $c \geq 2$.  
%Changing variables $x \rightarrow \overline{y} x$ followed by $y \rightarrow \overline{y}$, we obtain the pleasantly symmetric formula
% \begin{equation}
%  a(m;p^c, p) = p \sum_{x \shortmod{p^{c}}} \; \sumstar_{y \shortmod{p^c}} \leg{xy}{p} \e{x + y + m_1 xy}{p^c}.
% \end{equation}
% If $c$ is even so that $c = 2 \beta$, say, then we write $x = x_1 + p^{\beta} x_2$ and $y = y_1 + p^{\beta} y_2$ where all variables run modulo $p^{\beta}$, we have
% \begin{equation}
%  a(m;p^{2 \beta}, p) = p \sum_{x_1, x_2, y_1, y_2} \leg{x_1}{p} 
% \end{equation}
Write $x = x_1 + p x_2$ where $x_1$ runs modulo $p$, and $x_2$ runs modulo $p^{c-1}$; then the sum over $x_2$ vanishes unless $y \equiv -m_1 \pmod{p^{c-1}}$, so we have
\begin{equation}
 a(m;p^c,\pm p) = p^c \sum_{x_1 \shortmod{p}}  \; \sumstar_{y \equiv - m_1 \shortmod{p^{c-1}}} \leg{x_1}{p} \e{x_1(y+m_1)}{p^c} \e{\overline{y}}{p^c}.
\end{equation}
Now write $y \equiv - m_1 + v p^{c-1}$ where $v$ runs modulo $p$.  The sum over $x_1$ becomes a Gauss sum.  Noting that $\overline{y} \equiv - \overline{m_1} - v \overline{m_1}^2 p^{c-1}$, we have
\begin{equation}
 a(m;p^c,\pm p) = p^{c + \half} \epsilon_p \sum_{v \shortmod{p}} \leg{v}{p} \e{-\overline{m_1} - v\overline{m_1}^2 p^{c-1}}{p^c} = p^{c+1} \e{-\overline{m_1}}{p^c}.
\end{equation}
Having considered all the cases, this completes the proof of \eqref{eq:abound}.

Finally, we show $a(0;c,D) = 0$.  To see this, it suffices to note from the above calculations that $a(0;q^c, 1) = 0$ for $c \geq 1$.  Since $(q,D) = 1$, we have that $a(0;c,D)$ is divisible by $a(0;q^c, 1)$ for some such $c \geq 1$.
%observe that the previous evaluations show that the only way $a(0;p^a, p^b)$ is nonzero is if $a=b=1$ for every prime $p$ dividing $cD$.  This means $c=|D|$, but $q|c$ and $(q,D) =1$, so this is impossible.
\end{proof}

\begin{proof}[Proof of Lemma \ref{lemma:analytical}]
% We begin with some crude bound prior to a more detailed analysis.
%Integration by parts twice shows $r(m;c,D) \ll X^{100} m^{-2}$, say; we may then assume that $m$ is polynomially bounded by $X$.
We begin with the important special case $T \leq X^{\varepsilon}$.  The trivial bound shows $r(m;c,D) \ll (T+1)M N \ll NX^{\varepsilon}$.
For $m \neq 0$, integration by parts twice shows that
\begin{equation}
\label{eq:rparts}
 r(m;c,D) = \frac{-(cD)^2}{(2\pi m)^2} \int_0^{\infty} \frac{d^2}{dx^2} [w(x) H(\frac{4 \pi \sqrt{x}}{c})] e(\frac{-mx}{c|D|}) dx \ll (\frac{|D|}{|m|\sqrt{N}})^2 N (T+1)M X^{\varepsilon},
\end{equation}
which when combined with the trivial bound gives \eqref{eq:rbound}.

Now suppose $T \geq X^{\varepsilon}$ and hence $M \geq X^{\varepsilon}$, adjusting the value of $\varepsilon$ as necessary.  For convenience, note that integrating by parts one more time in \eqref{eq:rparts} shows $r(m;c,D) \ll \leg{|D|}{|m|\sqrt{N}}^3 N(T+1)M X^{\varepsilon}$ which is satisfactory for \eqref{eq:rbound} provided $|m| \gg X^{100}$, say.  Now assume $|m| \ll X^{100}$.
 Inserting \eqref{eq:Hintegral} into the definition of $r(m;c,D)$, writing $2\cos(y) = e^{iy} + e^{-iy}$, and changing variables $x \rightarrow Nx$, we have
 \begin{multline}
 r(m;c,D) = \frac{N(T+1)}{2} \sum_{\delta_1, \delta_2 \in \{ \pm \}}  
\\
\intR \int_0^{\infty} w_N(x) u_{\delta_1}(v) e\big(-\frac{mN}{c|D|}x + \delta_1 \frac{Tv}{M} + 2 \delta_2 \sqrt{x}\frac{\sqrt{N}}{c} \cosh(\frac{\pi v}{M}) \big) dx dv ,
 \end{multline}
 where $w_N(x) = w(Nx)$ so that $w_N^{(j)}(x) \ll_j 1$.
 
 By the rapid decay of $u_{\delta}$, and the fact that $m$ is polynomially bounded by $X$, we may truncate the $v$-integral at $M^{\varepsilon}$ since this gives an acceptable error term.
 Now consider the inner $x$-integral.  With
\begin{equation}
F_1(x) = - \frac{mN}{c|D|} x +\delta_2 2\sqrt{x} \frac{\sqrt{N}}{c} \cosh (\pi v/M), 
\end{equation}
we have
\begin{equation}
F_1'(x) = -\frac{mN}{c|D|} +\delta_2 \frac{\sqrt{N}}{c \sqrt{x}} \cosh (\pi v/M),
\end{equation}
and for $j \geq 2$ we have
\begin{equation}
F_1^{(j)}(x) \asymp \frac{\sqrt{N}}{c} \cosh (\pi v/M) \asymp \frac{\sqrt{N}}{c},
\end{equation}
 for all $x$ in the support of $w_N$.  An easy application of Lemma 8.1 of \cite{BKY} shows that the $x$-integral is very small unless
 \begin{equation}
\label{eq:msize}
 \big| \frac{mN}{cD} \big| \asymp \frac{\sqrt{N}}{c}. % \cosh(\pi v/M).
 \end{equation}
Precisely, if \eqref{eq:msize} does not hold, then
\begin{equation}
 \intR w_N(x) e(F_1(x)) dx \ll_A (\big| \frac{mN}{cD} \big| + \frac{\sqrt{N}}{c} %\cosh(\frac{\pi v}{M})
)^{-A}.
\end{equation}
Since $\sqrt{N}/c \geq X^{\varepsilon}$ (adjusting $\varepsilon$ as necessary), the contribution to $r(m;c,D)$ for parameters where \eqref{eq:msize} does not hold is satisfactory for \eqref{eq:rbound}.  Now assume $\eqref{eq:msize}$ holds.  Before refining the $x$-integral, we turn to the $v$-integral, which takes the form
\begin{equation}
\int_{|v| \leq M^{\varepsilon}} u_{\pm}(v) e(F_2(v)) dv,
\end{equation}
where 
\begin{equation}
 F_2(v) = \delta_1 \frac{Tv}{M} + 2 \delta_2 \sqrt{x}\frac{\sqrt{N}}{c} \cosh(\frac{\pi v}{M}).
 \end{equation}
 We calculate
 \begin{equation}
 F_2'(v) = \delta_1 \frac{T}{M} + 2 \delta_2 \sqrt{x} \frac{\sqrt{N}}{c} \frac{\pi}{M} \sinh( \frac{\pi v}{M}),
 \end{equation}
 and for $j \geq 2$,
 \begin{equation}
 F_2^{(j)}(v) \ll \frac{\sqrt{N}}{c} \frac{1}{M^j}.
 \end{equation}
 Similarly,
 \begin{equation}
 \frac{d^j}{dv^j} w_0(\frac{x}{Z} - \frac{D^2 \cosh^2(\pi v/M)}{m^2 N Z} ) u_{\pm}(v)
 \end{equation}
 If $c \geq \frac{\sqrt{N}}{MT} X^{\varepsilon}$, then $F_2' \gg \frac{T}{M}$ and another easy application of Lemma 8.1 of \cite{BKY} shows that the $v$-integral is very small.  Now suppose
 \begin{equation}
 \label{eq:csharpbound}
 c \leq \frac{\sqrt{N}}{MT} X^{\varepsilon}.
 \end{equation}

Now we return to the $x$-integral.  With an appropriate choice of $\delta_2$, a stationary point exists inside the range of integration at
\begin{equation}
x_0 = \frac{D^2 \cosh^2(\pi v/M)}{m^2 N} \sim \frac{D^2}{m^2 N}. %=: \overline{x_0}.
\end{equation}
For the other choice of sign of $\delta_2$ the first derivative is large and the integral is very small.
The proof of Proposition 8.2 of \cite{BKY} shows that with $Z = Y^{-1/2 + \varepsilon}$, $Y = \sqrt{N}/c$, we have
\begin{equation}
\intR w_N(x) e(F_1(x)) dx = \intR w_N(x) w_0(\frac{x-x_0}{Z}) e(F_1(x)) dx + O(X^{-A}),
\end{equation}
where $w_0$ is any fixed, compactly-supported function that is identically $1$ in a neighborhood of $0$.  The point is that on the complement of $|x-x_0| \leq Z$ the first derivative of $F_1$ is large enough that repeated integration by parts shows that the complementary integral is very small.
Thus we have
\begin{equation}
\label{eq:rinnerv}
|r(m;c,D)| \ll NT \intR  \Big| \int_{|v| \leq M^{\varepsilon}} K(v) e(F_2(v)) dv \Big| dx + O(X^{-100}),
\end{equation}
for some choice of $\delta_i$'s, where
\begin{equation}
 K(v) = w_0(\frac{x}{Z} - \frac{D^2 \cosh^2(\pi v/M)}{m^2 N Z} ) u_{\pm}(v).
 \end{equation}
 Note $K^{(j)}(v) \ll Z^{-1} M^{-(1-\varepsilon)j}$.  
 Proposition 8.2 of \cite{BKY} again shows that the inner $v$-integral in \eqref{eq:rinnerv} is bounded by $M^{1+\varepsilon}/\sqrt{Y}$, giving now
\begin{equation}
r(m;c,D) \ll NT \frac{M}{Y} X^{\varepsilon} = N \frac{c M T}{\sqrt{N}} X^{\varepsilon} \ll N X^{\varepsilon},
\end{equation}
using \eqref{eq:csharpbound}.

Finally, we remark that if $T \gg X^{\varepsilon}$ then the condition \eqref{eq:msize} is incompatible with $m=0$, so $r(0;c,D) = O(X^{-100})$.
\end{proof}

\section{Extending the Kohnen and Sengupta result}
\label{section:largeweight}
Here we quickly sketch the proof of \eqref{eq:KS}.  Formula (8) of \cite{KohnenSengupta} gives that the left hand side is
\begin{equation}
 \ll k^{1+\varepsilon} \Big ( 1+ \big| \sum_{c \geq 1} \frac{1}{c} H_c(|D|, |D|) J_{k-\half} (\pi |D|/c) \big| \Big),
\end{equation}
where $H_c(m,n)$ is a generalized Kloosterman sum corresponding to the half-integral weight multiplier system.  By calculations in Section 3 of \cite{IwHalf}, we have the bound
\begin{equation}
 |H_c(m,n)| \leq 2^{\nu+2} |S_{\chi}(m',n';l)|,
\end{equation}
where $c = 2^{\nu} l$ with $l$ odd, $m' = \overline{2}^{\nu} m$, $n' = \overline{2}^{\nu} n$, and $S_{\chi}(m,n;l)$ is the usual Sali\'{e} sum.  Then by Lemmas 3 and 4 of \cite{IwHalf}, we have $|S_{\chi}(n,n;l)| \leq d(l) l^{1/2} (n,l)^{1/2}$.  Since $k \rightarrow \infty$, we can truncate the sum over $c$ at $c \leq 100 |D|/k$ since otherwise the Bessel function is exponentially small.  We have for $x \gg \nu$ that (see (2.11') of \cite{ILS})
\begin{equation*}
 J_{\nu}(x) \ll \nu^{-1/4} (|x-\nu| + \nu^{1/3})^{-1/4}.
\end{equation*}
Combining these bounds and summing trivially over $c$, we obtain \eqref{eq:KS}, as desired.

On the side, we remark that the general approach used to prove Theorem \ref{thm:firstmoment} can be used to prove \eqref{eq:KS} also.

%%%%%%%%%%%%%%%%%%%%%%%%%%%%%%%%%%%%%%%%%%%%%%%%%%%%%%%%%%%%%%%%%%%%%%%%%%%%%%%%%%%%%%%%%%%%%%%%%%%%%%


\begin{thebibliography}{99}
\bibitem[Bi]{Biro} A. Bir{\'o}, {\it 
Cycle integrals of Maass forms of weight 0 and Fourier coefficients of Maass forms of weight 1/2.}
Acta Arith. 94 (2000), no. 2, 103--152. 

%\bibitem[Bl1]{Bl2} V. Blomer, {\em Non-vanishing of class group $L$-functions at the central point.\/} Ann. Inst. Fourier (Grenoble)
\textbf{54} (2004), 831--847.

\bibitem[Bl]{BlomerL4} V. Blomer, {\em On the 4-norm of an automorphic form\/}, Journal of the European Mathematical Society, to appear. arXiv:1110.4717


\bibitem[BH]{BlHa} V. Blomer and G. Harcos, {\em Hybrid bounds for twisted $L$-functions\/}. J. Reine Angew. Math. \textbf{621} (2008), 53--79.   

%\bibitem[Bu]{Bu} D. A. Burgess, {\em On character sums and L-series.\/} Proc. London Math. Soc. \textbf{12} (1962),
%193--206; II, ibid. \textbf{13} (1963), 524--536.

%\bibitem[BHo]{BlHo} V. Blomer and R. Holowinsky, {\em Bounding sup-norms of cusp forms of large level\/}. Invent. Math. \textbf{179} (2010),  645--681.

\bibitem[BKY]{BKY} V. Blomer, R. Khan, and M. Young, {\it Mass distribution of holomorphic cusp forms.} Preprint, 2012, http://arxiv.org/abs/1203.2573

\bibitem[CI]{CI} B. Conrey and H. Iwaniec, {\em The cubic moment of central values of automorphic L-functions.\/} 
Ann. of Math. \textbf{151} (2000), 1175--1216. 

%\bibitem[DI]{DI} J.-M. Deshouillers and H. Iwaniec, {\em Kloosterman sums and Fourier coefficients of cusp forms.\/} 
%Invent. Math. \textbf{70} (1982/83), 219--288. 
\bibitem[D]{Duke} W. Duke, {\it Hyperbolic distribution problems and half-integral weight Maass forms.}  Invent. Math. 92 (1988), no. 1, 73--90.

\bibitem[DFI]{DFI} W. Duke, J. Friedlander, and H. Iwaniec, {\it Class group $L$-functions.}
Duke Math. J. 79 (1995), no. 1, 1--56. 

\bibitem[FW]{FW} B. Feigon and D. Whitehouse, {\em Averages of central $L$-values of Hilbert modular forms with an application to subconvexity. \/} 
Duke Math. J. \textbf{149} (2009), 347--410.


\bibitem[GJ]{GJ} S. Gelbart and H. Jacquet, {\em A relation between automorphic representations of $GL(2)$ and $GL(3)$.\/}  
Ann. Sci. \'Ecole Norm. Sup. \textbf{11} (1978), 471--542.


\bibitem[GR]{GR} I.S. Gradshteyn, and I.M. Ryzhik, {\it Table of Integrals, Series, and Products}. 
 Translated from the Russian. Sixth edition. Translation edited and with a preface by Alan Jeffrey and Daniel Zwillinger. Academic Press, Inc., San Diego, CA, 2000.

\bibitem[GZ]{GZ} B. Gross and D. Zagier, {\em Heegner points and derivatives
of $L$-series\/}. Invent. Math. \textbf{84} (1986), 225--320.
\bibitem[HaMi]{HMi} G. Harcos and P. Michel, {\em The subconvexity problem
for Rankin-Selberg $L$-functions and equidistribution of Heegner points.
II.\/} Invent. Math.  \textbf{163}  (2006),  581--655.

\bibitem[HT]{HT} G. Harcos and N. Templier, {\em On the sup-norm of Maass cusp forms of large level. III}.  Preprint.

\bibitem[H-B1]{H-B1} D.R. Heath-Brown, {\it Hybrid bounds for Dirichlet L-functions.} Invent. Math. 47 (1978), no. 2, 149--170.
\bibitem[H-B2]{H-B2} D.R. Heath-Brown, {\it Hybrid bounds for Dirichlet L-functions. II.} Quart. J. Math. Oxford Ser. (2) 31 (1980), no. 122, 157--167.

%\bibitem[HL]{HL} J. Hoffstein and P. Lockhart, {\em Coefficients of Maass forms and the Siegel zero (with an appendix by D. Goldfeld,
%J. Hoffstein and D. Lieman)\/}, Ann. of Math. \textbf{140} (1994), 161--181.


\bibitem[HoMu]{HMu} R. Holowinsky and R. Munshi, {\em Level Aspect Subconvexity For Rankin-Selberg $L$-functions\/}. arXiv:1203.1300

\bibitem[HW]{HuxleyWatt} M. N. Huxley and N. Watt, {\it Hybrid bounds for Dirichlet's L-function.} Math. Proc. Cambridge Philos. Soc. 129 (2000), no. 3, 385--415. 


%\bibitem[I]{I} H. Iwaniec, {\em Small eigenvalues of Laplacian for $\Gamma_0(N)$\/}. Acta Arith. \textbf{56} (1990), 65--82.

%\bibitem[I]{I} H. Iwaniec, {\em Spectral methods of automorphic forms.\/} 
%Second edition. Graduate Studies in Mathematics, 53. American Mathematical Society, Providence, RI; 
%Revista Matemitica Iberoamericana, Madrid, 2002. xii+220 pp.


\bibitem[I1]{IwHalf} H. Iwaniec, {\it Fourier coefficients of modular forms of half-integral weight.}
Invent. Math. 87 (1987), no. 2, 385--401. 

\bibitem [I2] {IwTopics} H. Iwaniec, \emph{Topics in Classical Automorphic Forms.} Grad. Stud.
      Math., vol 17, Amer. Math. Soc., 1997.
%\bibitem[I3]{I} H. Iwaniec, {\em Small eigenvalues of Laplacian for $\Gamma_0(N)$\/}. Acta Arith. \textbf{56} (1990), 65--82.

\bibitem[IK]{IK} H. Iwaniec and E. Kowalski, {\em Analytic number theory.\/}
American Mathematical Society Colloquium Publications, 53. American Mathematical
Society, Providence, RI, 2004. xii+615 pp.

\bibitem[ILS]{ILS} H. Iwaniec, W. Luo and P. Sarnak, {\it Low lying zeros of families of $L$-functions.}  Publ. Math. Inst. Hautes \'Etudes Sci.  no. 91, 55-131 (2001).

\bibitem[KaSa]{KatokSarnak} S. Katok and P. Sarnak, {\it Heegner points, cycles and Maass forms.} Israel J. Math. 84 (1993), no. 1-2, 193--227. 

%\bibitem[KiSh]{KS} H. Kim and F. Shahidi, {\em Functorial products for $GL_2 \times GL_3$ and the symmetric cube for $GL_2$.\/} 
%With an appendix by Colin J. Bushnell and Guy Henniart. Ann. of Math. \textbf{155} (2002), 837--893.

\bibitem[KoSe]{KohnenSengupta} W. Kohnen and J. Sengupta, {\it On quadratic character twists of Hecke $L$-functions attached to cusp forms of varying weights at the central point.} Acta Arith. 99 (2001), no. 1, 61--66.

%\bibitem[KZ]{KZ} W. Kohnen and D. Zagier, {\em Values of $L$-series of modular forms at the center of critical strip\/}. Invent. Math. \textbf{64} (1981), 175--198.


\bibitem[KMV]{KMV} E. Kowalski, P. Michel, J. VanderKam, {\em Rankin-Selberg $L$-functions in the level aspect.\/} 
Duke Math. J. \textbf{114} (2002), 123--191.

%\bibitem[LR]{LR} E. Lapid and S. Rallis, {\em On the nonnegativity of $L(\pi,\frac{1}{2})$ for $\textrm{SO}_{2n+1}$\/}. 
%Ann. of Math. \textbf{157} (2003), 891--917. 


\bibitem[L]{L} Xiannan Li, {\em Upper bounds on $L$-functions at the edge of the critical strip\/}. IMRN (2010), 727--755.

\bibitem[LY]{LiYoung} Xiaoqing Li and M. Young, {\em The $L^2$ restriction norm of a $GL_3$ Maass form.} Compositio Math. \textbf{148} (2012), 675--717.  


\bibitem[Mi]{Michel} P. Michel, {\em The subconvexity problem for Rankin-Selberg $L$-functions and equidistribution of Heegner points.\/} 
Ann. of Math. \textbf{160} (2004), 185--236.

\bibitem[MR]{MR} P. Michel and D. Ramakrishnan, {\em Consequences of the Gross/Zagier formulae: Stability of average $L$-values, subconvexity, and non-vanishing mod $p$\/}. 
arXiv:0709.4668 

 \bibitem[MV1]{MVICM} P. Michel and A. Venkatesh, {\it Equidistribution, $L$-functions and ergodic theory: on some problems of Yu. Linnik.} International Congress of Mathematicians. Vol. II, 421--457, Eur. Math. Soc., Z\"{u}rich, 2006. 

\bibitem[MV2]{MV} P. Michel and A. Venkatesh, {\em Heegner points and non-vanishing of Rankin/Selberg $L$-functions.\/} 
Analytic number theory, 169--183, Clay Math. Proc., 7, Amer. Math. Soc., Providence, RI, 2007. 

\bibitem[MV3]{MV2} P. Michel and A. Venkatesh, {\it The subconvexity problem for ${\rm GL}_2$}. Publ. Math. Inst. Hautes \'Etudes Sci. No. 111 (2010), 171--271.

%\bibitem[Mo]{Molteni} G. Molteni, {\em Upper and lower bounds at s=1 for certain Dirichlet series with Euler product.\/}  Duke Math. J. \textbf{111} (2002), 133--158. 
\bibitem[Mu]{Munshi} R. Munshi, {\it On a hybrid bound for twisted L-values.}  Arch. Math. (Basel) 96 (2011), no. 3, 235--245. 
\bibitem[N1]{Nelson} P. Nelson, {\em Equidistribution of cusp forms in the level aspect.} Duke Math. J. 160 (2011), no. 3, 467--501. 
\bibitem[N2]{N} P. Nelson, {\em Stable averages of central values of Rankin-Selberg L-functions: some new variants.\/} arXiv:1202.6313


%\bibitem[R]{R} D. Ramakrishnan, {\em Modularity of the Rankin-Selberg L-series, and multiplicity one for \textrm{SL(2)}.\/} Ann. of Math. \textbf{152} (2000), 45--111. 

\bibitem[S]{Schmidt} W. Schmidt, {\it Equations over finite fields. An elementary approach.} Lecture Notes in Mathematics, Vol. 536. Springer-Verlag, Berlin-New York, 1976. ix+276 pp.

\bibitem[Te]{TemplierDuke} N. Templier, {\it A nonsplit sum of coefficients of modular forms.}  Duke Math. J. 157 (2011), no. 1, 109--165.


\bibitem[Ti]{T} E. C. Titchmarsh, {\em The theory of the Riemann zeta-function.\/} Second edition.  Edited and with a preface by D. R. Heath-Brown. The Clarendon Press, Oxford University Press, New York, 1986. x+412 pp.

\bibitem[Wal]{Waldspurger} J.-L. Waldspurger, {\it Sur les coefficients de Fourier des formes modulaires de poids demi-entier.} J. Math. Pures Appl. (9) 60 (1981), no. 4, 375--484. 

\bibitem[Wats]{W} T. Watson, {\em Rankin triple products and quantum chaos\/}, Annals of Mathematics, to appear. 
\bibitem[Watt]{Watt} N. Watt, {\it On the mean squared modulus of a Dirichlet L-function over a short segment of the critical line.} Acta Arith. 111 (2004), no. 4, 307--403.
\bibitem[Z1]{Z} S. W. Zhang, {\em Gross--Zagier formula for $GL_2$\/}. Asian J. Math. \textbf{5} (2001), 183--290.

\bibitem[Z2]{Z2} S. W. Zhang, {\em Equidistribution of CM-points on quaternion Shimura varieties.\/} Int. Math. Res. Not. 2005, no. 59, 3657--3689.


\end{thebibliography}
\end{document}